\newcommand{\col}{\mathrm{col}} 
\newcommand{\row}{\mathrm{row}} 
\newcommand{\pivL}{\mathrm{piv}_l} 
\newcommand{\pivR}{\mathrm{piv}_r} 
\newcommand{\rev}{\mathrm{rev}} 
\newcommand{\ins}{\mathrm{ins}} 
\newcommand{\rec}{\mathrm{rec}} 
\newcommand{\leftexp}[2]{{\vphantom{#2}}^{#1}{#2}}
\newcommand{\shape}{\mathrm{shape}} 
\newcommand{\DL}[1]{\mathbf{D}_{#1}}
\newcommand{\ds}[1]{\mathbf{d}_{#1}}
\newcommand{\IL}[1]{\mathbf{I}_{#1}}
\newcommand{\is}[1]{\mathbf{i}_{#1}}
\theoremstyle{definition}
\newtheorem*{definition}{Definition}
\newtheorem*{definitions}{Definitions}
\newtheorem{theorem}{Theorem}
\newtheorem{lemma}{Lemma}
\newtheorem{remark}{Remark}
\newtheorem*{notation}{Notation}
\begin{document}\title{Modified Growth Diagrams, Permutation Pivots, and the BWX Map $\phi^*$}
\author{Jonathan Bloom\\Dartmouth College \and Dan Saracino\\Colgate University}
\date{}
\maketitle
\begin{abstract}

In their paper on Wilf-equivalence for singleton classes, Backelin, West, and Xin introduced a transformation $\phi^*$, defined by an iterative process and operating on (all) full rook placements on Ferrers boards. Bousquet-M$\acute{\textrm{e}}$lou and Steingr$\acute{\textrm{\i}}$msson proved the analogue of the main result of Backelin, West, and Xin in the context of involutions, and in so doing they needed to prove that $\phi^*$ commutes with the operation of taking inverses. The proof of this commutation result was long and difficult, and Bousquet-M$\acute{\textrm{e}}$lou and Steingr$\acute{\textrm{\i}}$msson asked if $\phi^*$ might be reformulated in such a way as to make this result obvious.  In the present paper we provide such a reformulation of $\phi^*$, by modifying the growth diagram algorithm of Fomin. This also answers a question of Krattenthaler, who noted that a bijection defined by the unmodified Fomin algorithm obviously commutes with inverses, and asked what the connection is between this bijection and $\phi^*$.

\end{abstract}
\noindent \textbf{1. Introduction}
\vspace{.25in}

For any permutation $\tau=\tau_1\tau_2\ldots \tau_r,$ let $S_n(\tau)$ denote the set of permutations in $S_n$ that avoid $\tau$, in the sense that they have no subsequence order-isomorphic to $\tau.$

In their paper \emph{Wilf-equivalence for singleton classes} [1], Backelin, West, and Xin prove an important general result about permutations avoiding a single pattern:  If $k, \ell \geq 1$ and $\rho$ is a permutation of $\{k+1,\ldots, k+\ell\}$, then for every $n\geq k+\ell$ we have $|S_n(12\ldots k\rho)|=|S_n(k\ldots 1\rho)|.$  The key tool in the proof is a map $\phi^*$, which operates on a permutation $\sigma$ as follows: Order the $k\ldots1$-patterns $\sigma_{i_1}\ldots \sigma_{i_k}$in $\sigma$ lexicographically (according to the $\sigma_j$'s, not the $j$'s) and let $\phi(\sigma)$ be obtained from $\sigma$ by taking the smallest $\sigma_{i_1}\ldots \sigma_{i_k}$ and placing in positions $i_1,\ldots, i_k$ the values $\sigma_{i_2},\ldots, \sigma_{i_k}, \sigma_{i_1}$, respectively, and leaving all other entries of $\sigma$ fixed. Let $\phi^*(\sigma)$ be obtained by applying $\phi$ repeatedly until no $k\ldots1$-patterns remain.  It is shown that $\phi^*$ induces a bijection from $S_n(k-1\ldots1k)$ onto $S_n(k\ldots 1)$, and that from this bijection one can obtain an important ingredient of the proof, namely a bijection from $S_n(1\ldots k)$ onto $S_n(k\ldots1).$

In [3], Bousquet-M$\acute{\textrm{e}}$lou and Steingr$\acute{\textrm{\i}}$msson prove the analogue of the Backelin-West-Xin Theorem in the context of involutions, and in so doing they must prove that $\phi^*(\sigma^{-1})=(\phi^*(\sigma))^{-1}$ for all permutations $\sigma$, so that the bijection from $S_n(1\ldots k)$ onto $S_n(k\ldots 1)$ will also commute with inverses.
The proof of the commutation result for $\phi^*$ is long and difficult, and Bousquet-M$\acute{\textrm{e}}$lou and Steingr$\acute{\textrm{\i}}$msson ask for a reformulation of $\phi^*$ that will make this result obvious.  In [6], Krattenthaler describes another bijection from $S_n(1\ldots k)$ into $S_n(k\ldots 1)$, in terms of growth diagrams, and notes that this bijection clearly commutes with inverses. He asks what connection there is between this bijection and $\phi^*$. In the present paper we answer both questions, by providing a reformulation of $\phi^*$ in terms of growth diagrams.

In proving their theorem, Backelin, West, and Xin find it necessary to work in the context of full rook placements on Ferrers boards, which includes permutations as a special case. For any Ferrers board $F$ and any permutation $\tau$, let $S_F(\tau)$ denote the set of all full rook placements on $F$ that avoid $\tau.$ (The relevant definitions will be reviewed in Section 2.)

Backelin, West, and Xin prove that
$|S_F(1\ldots k \rho)|=|S_F(k\ldots 1\rho)|$, for all $F$ and all permutations $\rho$ of $\{k+1,\ldots, k+\ell\}$, and in so doing they use an extension of $\phi^*$ to full rook placements. Bousquet-M$\acute{\textrm{e}}$lou and Steingr$\acute{\textrm{\i}}$msson also use this extension, so they prove that $\phi^*$ commutes with inverses in this broader context. Accordingly, our reformulation of $\phi^*$ will be given in this context, or rather in the even broader context of arbitrary rook placements (not necessarily full), with the term ``inverse" interpreted appropriately.

The outline of our paper is as follows. In Section 2 we review the needed background material on the Robinson-Schensted correspondence for partial permutations, Ferrers boards and rook placements, and growth diagrams. In Section 3 we give our reformulation of $\phi^*$ and the proof that it works, modulo a ``Main Lemma". In Section 4 we introduce the tool that will be used to prove this lemma: the  ``pivots" of a rook placement on a rectangular Ferrers board.  The pivots are related to the ``$L$-corners" of [2], and are a generalization of the ``$rcL$-corners" of  [8].  (We have chosen to use the term ``pivots", instead of ``corners", because of the prior use of the term ``corners" in connection with the diagram of a permutation.) Section 5 contains the proof of the Main Lemma, and the concluding Section 6 indicates how the proof leads naturally to a notion of generalized Knuth transformations.

\vspace{.25in}
\noindent\textbf{2. Review of the Needed Background}

\vspace{.25in}
\noindent\emph{2.1. The Robinson-Schensted correspondence for partial permutations}

\vspace{.15in}

A \emph{partial permutation} $\pi$ is a bijection between two sets $I$ and $J$ of positive integers. We represent $\pi$ in two-line notation as

\begin{displaymath}\pi = \left(\begin{array}{cccc}
 i_1 & i_2 & \cdots & i_m \\
 j_1 & j_2 & \cdots & j_m
\end{array}\right)\end{displaymath}
where $i_1< \ldots <i_m$ and the $j_t$'s are distinct.

The Robinson-Schensted \emph{insertion and recording tableaux} $P$ and $Q$ for $\pi$ are obtained as follows. Start by placing $j_1$ in the top row of $P$ and $i_1$ in the top row of $Q$. Assuming inductively that $i_1,\ldots, i_{t-1}$ and $j_1,\ldots, j_{t-1}$ have been placed, place $j_t$ in $P$ as follows. If $j_t$ is larger than all elements already in the top row of $P$, place $j_t$ at the right end of this row. If $j_s$ is the leftmost element already in the top row that is larger than $j_t$, replace $j_s$ by $j_t$ and ``bump" $j_s$ to the second row. Place $j_s$ at the right end of this row unless it is smaller than some element in this row, in which case let $j_s$ bump the leftmost element larger than it to the third row, and continue in this way. Place $i_t$ in the position in $Q$ corresponding to the position in $P$ that first became occupied when $j_t$ was placed in $P$.

For example, if
\begin{displaymath}\pi = \left(\begin{array}{ccccc}
 1 & 2 & 6 & 7 & 8 \\
 4 & 5 & 3 & 1 & 2
\end{array}\right)\end{displaymath}
then

\begin{center}$
\begin{array}{ccccc}
\raisebox{.6cm}{$P_\pi =$} &\young(12,35,4) &\textrm{\hspace{.5in}} &\raisebox{.6cm}{$Q_\pi =$} &\young(12,68,7)
\end{array}
$
\end{center}
 
 A basic property of $P$ and $Q$ is that there exists a partition $\lambda=\{\lambda_1\geq \lambda_2\geq \ldots \geq\lambda_t\}$ of $m$ such that the top rows of $P$ and $Q$ each contain $\lambda_1$ entries, the second rows each contain $\lambda_2$ entries, and so on. The partition $\lambda$ is called the \emph{shape} of $P$ and $Q$. In motivating our reformulation of $\phi^*$ it will be helpful to recall the theorem of Schensted [9] that states that $\lambda_1$ is the length of the longest increasing subsequence in $\pi$, and $t$ is the length of the longest decreasing subsequence.
 
 We will also use the theorem of Sch$\ddot{\textrm{u}}$tzenberger [10] that states that the insertion and recording tableaux for the inverse bijection $\pi^{-1}$ are $Q$ and $P$, respectively.

 \vspace{.15in}
 \noindent\emph{2.2. Ferrers boards, rook placements, and $\phi^*$}
 
 \vspace{.15in}
 Consider an $n\times n$ array of squares, and identify the pair $(i,j)$ with the square located in the $i$th column from the left and the $j$th row from the bottom. For any square $(i,j)$ in the array, let $R(i,j)$ denote the rectangle consisting of all squares $(k,\ell)$ such that $k\leq i$ and $\ell\leq j.$ A \emph{Ferrers board} (in French notation) is any subset $F$ of such an array with the property that for all $(i,j)\in F$ we have $R(i,j)\subseteq F.$ So for some $t$ and some $\lambda_1\geq\ldots \geq\lambda_t$, the Ferrers board consists of the first $\lambda_j$ squares from the $j$th row of the array, $1\leq j\leq t.$  The \emph{conjugate} of $F$ is the Ferrers board $F'=\{(j,i):(i,j)\in F\}$, so that $F'$ is obtained  by reflecting $F$ across the $SW$-$NE$ diagonal.
 
 A \emph{rook placement} on a Ferrers board $F$ is a subset of $F$ that contains at most one square from each row of $F$ and at most one square from each column of $F$.  We indicate the squares in the placement by putting markers (e.g., dots or $X$'s) in them. A rook placement is called \emph{full} if it includes exactly one square from each row and column of $F$.  (So if there exists a full rook placement on $F$, then $F$ has the same number of columns as rows.) From any rook placement $P$ on $F$ there results a partial permutation $\pi$ such that square $(i,j)$ is in $P$ if and only if $j$ is the value of the bijection $\pi$ at input $i$. $P$  is a full placement if and only if $F$ has $n$ rows and $n$ columns for some $n$ and $\pi$ is a permutation of $\{1,\ldots,n\}$. For any rook placement $P$ on $F$, the \emph{inverse} $P'$ of $P$ is the placement on the conjugate board $F'$ obtained by reflecting $F$ and all the markers for $P$ across the $SW$-$NE$ diagonal. The partial permutations resulting from $P$ and $P'$ are inverses of each other, if we regard them as bijections between sets.  If they are permutations, they are inverses in the usual sense.
 
 We say that a rook placement $P$ \emph{contains} a permutation $\tau\in S_r$ if and only if the resulting partial permutation $\pi$ contains a subsequence $\pi_{i_1}\ldots \pi_{i_r}$ order isomorphic to $\tau$ such that there is a rectangular subboard of $F$ that contains all the squares $(i_j,\pi_{i_j}).$ In this case we refer to the sequence of squares $(i_j,\pi_{i_j})$ as an \emph{occurrence} of $\tau$ in $P$. We say that $P$ \emph{avoids} $\tau$ if $P$ does not contain $\tau$.
 
 It is clear how to extend the definitions of $\phi$ and $\phi^*$ to rook placements, by using only the occurrences of $k\ldots 1$ in $P$, in the sense of the preceding paragraph.
 
 \vspace{.15in}
 
 \noindent\emph{2.3. Growth diagrams}
 
 \vspace{.15in}
 
 Our reformulation of $\phi^*$ will be accomplished by modifying Fomin's ([4, 5], see also [6]) construction of the growth diagram of a rook placement $P$ on a Ferrers board $F$.
 
 Fomin's construction assigns   partitions to the corners of all the squares in $F$, using the markers of $P$, in such a way that the partition assigned to any corner either equals the partition to its left or is obtained from it by adding 1 to one entry, and the partition assigned to any corner either equals the partition below it or is obtained from this partition by adding 1 to one entry. We start by assigning the empty partition $ \emptyset$ to each corner on the left and bottom edges of $F$. We then assign partitions to the other corners inductively. Assuming that the northwest, southwest, and southeast corners of a square $(i,j)$ have been assigned partitions $NW, SW,$ and $SE$, we assign to the northeast corner the partition $NE$ determined by the following rules.
 \begin{itemize}
  \item[1.] If $NW\neq SE$ then let $NE=NW \cup SE$, the partition whose $i$th entry is the maximum of the $i$th entries of $NW$ and $SE$. (Here we regard the absence of an entry as the presence of an entry $0$.)
 \item[2.] If $SW\neq NW=SE$ then $NW$ is obtained from $SW$ by adding 1 to the $i$th entry of $SW$, for some $i$. We obtain $NE$ from $NW$ by adding 1 to the $(i+1)$th entry.
 \item[3.] If $SW=NW=SE$ then we let $NE=SW$ unless the square $(i,j)$ contains a marker, in which case we obtain $NE$ from $SW$ by adding 1 to the first entry.
 \end{itemize}

\begin{lemma}\label{lemma1} ([6], Theorem 5.2.4 or [11], Theorem 7.13.5). For any square $(i,j)$, the partition assigned to the northeast corner of $(i,j)$ is the shape of the Robinson-Schensted tableaux for the partial permutation resulting from the restriction of $P$ to the rectangle $R(i,j).$
\end{lemma}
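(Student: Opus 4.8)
\smallskip
\noindent\textbf{Proof strategy.}

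\emph{Reduction to a single corner of a rectangle.} The plan is first to eliminate the parameters $(i,j)$. Let $\pi$ be the partial permutation of $P$, and for a subrectangle $R(i,j)$ let $P|_{R(i,j)}$ denote the restriction of $P$ to $R(i,j)$ and $\pi|_{R(i,j)}$ the corresponding partial permutation. The local rule producing the partition at the northeast corner of a square uses only the partitions already present at that square's other three corners (all lying weakly to the southwest) together with whether the square carries a marker; since every corner on the left or bottom edge of $R(i,j)$ is assigned $\emptyset$, an induction on $i+j$ shows that $NE(i,j)$, computed inside the growth diagram of $P$, coincides with the partition produced at the northeast corner of the rectangular board $R(i,j)$ carrying the placement $P|_{R(i,j)}$. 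So it suffices to prove: on an $m\times n$ rectangular board carrying a partial permutation $\pi$, the partition at the board's northeast corner equals $\shape(\pi)$, the common shape of the Robinson--Schensted tableaux of $\pi$.

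\emph{The induction.} I would prove this by induction on the number $m$ of columns, establishing the stronger claim $(\star)$. Let $T_k$ be the insertion tableau of $\pi$ restricted to columns $\le k$, for $0\le k\le m$, so that $T_m$ is the insertion tableau of $\pi$; and for a tableau $T$ with distinct entries let $\shape(T)_{\le j}$ denote the shape of the sub-tableau formed by those cells of $T$ whose entries are at most $j$ (a genuine Young diagram, by row- and column-strictness of $T$). Then $(\star)$ asserts: the partitions assigned to the corners $(m,0),(m,1),\dots,(m,n)$ up the right edge of the board are $\shape(T_m)_{\le 0}\subseteq\shape(T_m)_{\le 1}\subseteq\cdots\subseteq\shape(T_m)_{\le n}=\shape(T_m)=\shape(\pi)$. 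The base case $m=0$ is immediate. For the inductive step, note that since the columns of the board are exactly the inputs of $\pi$ in increasing order, the partial permutation on columns $\le m$ is obtained from that on columns $\le m-1$ by appending the column-$m$ pair (if column $m$ carries a rook, at height $h:=\pi(m)$), so $T_m$ is obtained from $T_{m-1}$ by a single Schensted row-insertion of $h$ (and $T_m=T_{m-1}$, handled as in the base case, if column $m$ is empty). Record the bumping path of this insertion: it passes up through rows $1,\dots,r$, the value bumped out of row $t-1$ being $x_t$, with $x_1:=h<x_2<\cdots<x_r$ and $x_r$ occupying the new cell in row $r$; put $x_{r+1}:=+\infty$. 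From the bumping mechanics one checks that $\shape(T_m)_{\le j}=\shape(T_{m-1})_{\le j}$ for $j<h$, and that for $x_t\le j<x_{t+1}$ the diagram $\shape(T_m)_{\le j}$ is obtained from $\shape(T_{m-1})_{\le j}$ by adjoining one box in row $t$. It then remains to run a secondary induction on $j$, moving up the $m$-th column of the growth diagram: the local rules compute $NE(m,j)$ from $SW=NE(m-1,j-1)$, $NW=NE(m-1,j)$ and $SE=NE(m,j-1)$, where $SW$ and $NW$ are supplied by $(\star)$ for $m-1$ --- so that, whenever $j$ is an entry of $T_{m-1}$, we know the box $NW/SW$ sits in the row of $T_{m-1}$ containing $j$ --- and $SE$ is supplied by the secondary hypothesis. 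Comparing, case by case according to whether $j=h$, or $j$ is an entry of $T_{m-1}$, or neither, one verifies that the rules output exactly the $\shape(T_m)_{\le j}$ just described; taking $j=n$ and $(i,j)\mapsto$ the board $R(i,j)$ via the reduction then gives the lemma.

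\emph{The main obstacle.} I expect the crux to be this last verification: that Fomin's three local rules reproduce, height by height, a single Schensted insertion together with its cascade of bumps. The points that make it work are that a marker sits in square $(m,j)$ precisely when $j=h=x_1$, which triggers rule~3's adjunction of a box to the first row --- the start of the bumping path --- and that the strictly increasing thresholds $x_1<x_2<\cdots<x_r$ are exactly the heights at which rule~2 pushes the freshly created box up a row, so it is deposited in the correct row; that the partitions produced along the way are honest partitions (the box adjoined is always addable) is automatic, since the target $\shape(T_m)_{\le j}$ is already known to be a Young diagram. An alternative, bypassing this bookkeeping, would be to identify the Greene-type invariant preserved by the local rules and then invoke Greene's theorem to match it with $\shape(\pi)$ --- but that requires developing the invariant from scratch, so I would keep the direct approach above.
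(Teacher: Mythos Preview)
The paper does not prove this lemma at all; it is quoted as a known result from Krattenthaler~[6] and Stanley~[11]. Your proposal is a correct direct proof, and in fact it is essentially the standard argument one finds in~[11]: reduce to a rectangle, then verify by induction on the column index that the chain of partitions up the rightmost column records, height by height, the insertion tableau (equivalently, that Fomin's local rule in column~$m$ simulates the single Schensted insertion $T_{m-1}\leadsto T_m$). Your case analysis is sound; in particular, the potentially delicate point---that when $j$ is an entry of $T_{m-1}$ lying strictly between the bumping thresholds $x_t$ and $x_{t+1}$ one cannot have $j$ in row~$t$---is exactly the observation that otherwise $j$, not $x_{t+1}$, would have been bumped by~$x_t$, so rule~2 is never invoked in that subcase and rule~1 gives the correct union. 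The only subcase where rule~2 fires is $j=x_s$ for some $s\ge 2$, and there one checks $NW=SE$ (both equal $SW$ plus a box at the end of row~$s-1$), so rule~2 correctly pushes the new box to row~$s$, matching the next step of the bumping path.
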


 \noindent \textbf{Example}. Consider the following (full) rook placement $P$ on the indicated Ferrers board $F$.
 
 \vspace{.15in}
 
\begin{figure}[h]
\begin{center}
\ifx\JPicScale\undefined\def\JPicScale{.75}\fi
\unitlength \JPicScale mm
\begin{picture}(70,70)(0,0)
\linethickness{0.1mm}
\put(0,0){\line(1,0){80}}
\linethickness{0.1mm}
\put(0,80){\line(1,0){30}}
\linethickness{0.1mm}
\put(0,70){\line(1,0){50}}
\linethickness{0.1mm}
\put(0,50){\line(1,0){80}}
\linethickness{0.1mm}
\put(0,40){\line(1,0){80}}
\linethickness{0.1mm}
\put(0,30){\line(1,0){80}}
\linethickness{0.1mm}
\put(0,20){\line(1,0){80}}
\linethickness{0.1mm}
\put(0,10){\line(1,0){80}}
\linethickness{0.1mm}
\put(80,0){\line(0,1){50}}
\linethickness{0.1mm}
\put(50,0){\line(0,1){70}}
\linethickness{0.1mm}
\put(0,0){\line(0,1){80}}

\put(25,74.64){\makebox(0,0)[cc]{\large{$\bullet$}}}

\put(45.37,64.64){\makebox(0,0)[cc]{\large{$\bullet$}}}

\put(35,54.64){\makebox(0,0)[cc]{\large{$\bullet$}}}

\put(15,44.64){\makebox(0,0)[cc]{\large{$\bullet$}}}

\put(5,34.64){\makebox(0,0)[cc]{\large{$\bullet$}}}

\put(55,24.64){\makebox(0,0)[cc]{\large{$\bullet$}}}

\put(75.07,14.64){\makebox(0,0)[cc]{\large{$\bullet$}}}

\put(65.06,4.64){\makebox(0,0)[cc]{\large{$\bullet$}}}

\linethickness{0.1mm}
\put(70,0){\line(0,1){50}}
\linethickness{0.1mm}
\put(60,0){\line(0,1){60}}
\linethickness{0.1mm}
\put(0,60){\line(1,0){60}}
\linethickness{0.1mm}
\put(30,0){\line(0,1){80}}
\linethickness{0.1mm}
\put(40,0){\line(0,1){70}}
\linethickness{0.1mm}
\put(20,0){\line(0,1){80}}
\linethickness{0.1mm}
\put(10,0){\line(0,1){80}}
\end{picture}
\end{center}
\end{figure}
 
\vspace{.15in}

\noindent The growth diagram is (denoting each partition by juxtaposing its entries)

 \vspace{.15in}
 
\begin{figure}[h]
\begin{center}
\ifx\JPicScale\undefined\def\JPicScale{.75}\fi
\unitlength \JPicScale mm
\begin{picture}(90,90)(0,0)
\linethickness{0.1mm}
\put(10,10){\line(1,0){80}}
\linethickness{0.1mm}
\put(10,90){\line(1,0){30}}
\linethickness{0.1mm}
\put(10,80){\line(1,0){50}}
\linethickness{0.1mm}
\put(10,60){\line(1,0){80}}
\linethickness{0.1mm}
\put(10,50){\line(1,0){80}}
\linethickness{0.1mm}
\put(10,40){\line(1,0){80}}
\linethickness{0.1mm}
\put(10,30){\line(1,0){80}}
\linethickness{0.1mm}
\put(10,20){\line(1,0){80}}
\linethickness{0.1mm}
\put(90,10){\line(0,1){50}}
\linethickness{0.1mm}
\put(60,10){\line(0,1){70}}
\linethickness{0.1mm}
\put(10,10){\line(0,1){80}}

\put(35,85){\makebox(0,0)[cc]{\large{$\bullet$}}}

\put(55,75){\makebox(0,0)[cc]{\large{$\bullet$}}}

\put(45,65){\makebox(0,0)[cc]{\large{$\bullet$}}}

\put(25,55){\makebox(0,0)[cc]{\large{$\bullet$}}}

\put(15,45){\makebox(0,0)[cc]{\large{$\bullet$}}}

\put(65,35){\makebox(0,0)[cc]{\large{$\bullet$}}}

\put(85,25){\makebox(0,0)[cc]{\large{$\bullet$}}}

\put(75,15){\makebox(0,0)[cc]{\large{$\bullet$}}}

\linethickness{0.1mm}
\put(80,10){\line(0,1){50}}
\linethickness{0.1mm}
\put(70,10){\line(0,1){60}}
\linethickness{0.1mm}
\put(10,70){\line(1,0){60}}
\linethickness{0.1mm}
\put(40,10){\line(0,1){80}}
\linethickness{0.1mm}
\put(50,10){\line(0,1){70}}
\linethickness{0.1mm}
\put(30,10){\line(0,1){80}}
\linethickness{0.1mm}
\put(20,10){\line(0,1){80}}

\put(18,48){\makebox(0,0)[cc]{\scriptsize{1}}}

\put(28,48){\makebox(0,0)[cc]{\scriptsize{1}}}

\put(48,48){\makebox(0,0)[cc]{\scriptsize{1}}}

\put(58,48){\makebox(0,0)[cc]{\scriptsize{1}}}

\put(38,48){\makebox(0,0)[cc]{\scriptsize{1}}}

\put(68,38){\makebox(0,0)[cc]{\scriptsize{1}}}

\put(78,28){\makebox(0,0)[cc]{\scriptsize{1}}}

\put(88,18){\makebox(0,0)[cc]{\scriptsize{1}}}

\put(78,18){\makebox(0,0)[cc]{\scriptsize{1}}}

\put(18,58){\makebox(0,0)[cc]{\scriptsize{1}}}

\put(18,68){\makebox(0,0)[cc]{\scriptsize{1}}}

\put(18,78){\makebox(0,0)[cc]{\scriptsize{1}}}

\put(18,88){\makebox(0,0)[cc]{\scriptsize{1}}}

\put(28,88){\makebox(0,0)[cc]{\scriptsize{2}}}

\put(28,78){\makebox(0,0)[cc]{\scriptsize{2}}}

\put(28,68){\makebox(0,0)[cc]{\scriptsize{2}}}

\put(28,58){\makebox(0,0)[cc]{\scriptsize{2}}}

\put(38,78){\makebox(0,0)[cc]{\scriptsize{2}}}

\put(38,68){\makebox(0,0)[cc]{\scriptsize{2}}}

\put(38,58){\makebox(0,0)[cc]{\scriptsize{2}}}

\put(48,58){\makebox(0,0)[cc]{\scriptsize{2}}}

\put(58,58){\makebox(0,0)[cc]{\scriptsize{2}}}

\put(88,28){\makebox(0,0)[cc]{\scriptsize{2}}}

\put(38,88){\makebox(0,0)[cc]{\scriptsize{3}}}

\put(48,78){\makebox(0,0)[cc]{\scriptsize{3}}}

\put(48,68){\makebox(0,0)[cc]{\scriptsize{3}}}

\put(58,68){\makebox(0,0)[cc]{\scriptsize{3}}}

\put(58,78){\makebox(0,0)[cc]{\scriptsize{4}}}

\put(67.38,48){\makebox(0,0)[cc]{\scriptsize{11}}}

\put(77.27,38){\makebox(0,0)[cc]{\scriptsize{11}}}

\put(87.38,38){\makebox(0,0)[cc]{\scriptsize{21}}}

\put(86.43,48){\makebox(0,0)[cc]{\scriptsize{211}}}

\put(76.3,58){\makebox(0,0)[cc]{\scriptsize{211}}}

\put(86.66,58){\makebox(0,0)[cc]{\scriptsize{221}}}

\put(76.54,48){\makebox(0,0)[cc]{\scriptsize{111}}}

\put(67.26,58){\makebox(0,0)[cc]{\scriptsize{21}}}

\put(66.9,68){\makebox(0,0)[cc]{\scriptsize{31}}}

\put(8,88){\makebox(0,0)[cc]{\scriptsize{$\emptyset$}}}

\put(8,78){\makebox(0,0)[cc]{\scriptsize{$\emptyset$}}}

\put(8,68){\makebox(0,0)[cc]{\scriptsize{$\emptyset$}}}

\put(8,58){\makebox(0,0)[cc]{\scriptsize{$\emptyset$}}}

\put(8,48){\makebox(0,0)[cc]{\scriptsize{$\emptyset$}}}

\put(8,38){\makebox(0,0)[cc]{\scriptsize{$\emptyset$}}}

\put(18,38){\makebox(0,0)[cc]{\scriptsize{$\emptyset$}}}

\put(18,28){\makebox(0,0)[cc]{\scriptsize{$\emptyset$}}}

\put(18,18){\makebox(0,0)[cc]{\scriptsize{$\emptyset$}}}

\put(28,38){\makebox(0,0)[cc]{\scriptsize{$\emptyset$}}}

\put(28,28){\makebox(0,0)[cc]{\scriptsize{$\emptyset$}}}

\put(28,18){\makebox(0,0)[cc]{\scriptsize{$\emptyset$}}}

\put(38,38){\makebox(0,0)[cc]{\scriptsize{$\emptyset$}}}

\put(38,28){\makebox(0,0)[cc]{\scriptsize{$\emptyset$}}}

\put(38,18){\makebox(0,0)[cc]{\scriptsize{$\emptyset$}}}

\put(48,38){\makebox(0,0)[cc]{\scriptsize{$\emptyset$}}}

\put(48,28){\makebox(0,0)[cc]{\scriptsize{$\emptyset$}}}

\put(48,18){\makebox(0,0)[cc]{\scriptsize{$\emptyset$}}}

\put(58,38){\makebox(0,0)[cc]{\scriptsize{$\emptyset$}}}

\put(58,28){\makebox(0,0)[cc]{\scriptsize{$\emptyset$}}}

\put(58,18){\makebox(0,0)[cc]{\scriptsize{$\emptyset$}}}

\put(8,28){\makebox(0,0)[cc]{\scriptsize{$\emptyset$}}}

\put(8,18){\makebox(0,0)[cc]{\scriptsize{$\emptyset$}}}

\put(68,28){\makebox(0,0)[cc]{\scriptsize{$\emptyset$}}}

\put(68,18){\makebox(0,0)[cc]{\scriptsize{$\emptyset$}}}

\put(18,8){\makebox(0,0)[cc]{\scriptsize{$\emptyset$}}}

\put(28,8){\makebox(0,0)[cc]{\scriptsize{$\emptyset$}}}

\put(38,8){\makebox(0,0)[cc]{\scriptsize{$\emptyset$}}}

\put(48,8){\makebox(0,0)[cc]{\scriptsize{$\emptyset$}}}

\put(58,8){\makebox(0,0)[cc]{\scriptsize{$\emptyset$}}}

\put(8,8){\makebox(0,0)[cc]{\scriptsize{$\emptyset$}}}

\put(68,8){\makebox(0,0)[cc]{\scriptsize{$\emptyset$}}}

\put(78,8){\makebox(0,0)[cc]{\scriptsize{$\emptyset$}}}

\put(88,8){\makebox(0,0)[cc]{\scriptsize{$\emptyset$}}}

\end{picture}
\end{center}
\end{figure}

 \noindent Note that the partial permutation obtained by restricting $P$ to $R(8,5)$ is
 
 \begin{displaymath}\pi = \left(\begin{array}{ccccc}
 1 & 2 & 6 & 7 & 8 \\
 4 & 5 & 3 & 1 & 2
\end{array}\right)\end{displaymath}
 
\noindent  and the partition assigned to the northeast corner of square $(8,5)$ is 221, which is, by the example given in subsection 2.1, the shape of the Robinson-Schensted tableaux for $\pi$.

In general, if we are given $F$ and the partitions in the growth diagram for $P$ that occur along the right/up border of $F$ (i.e., the border of $F$ minus the horizontal bottom edge and the vertical left edge), we can inductively reconstruct the rest of the growth diagram and the placement $P$ by using the following rules to assign a partition to the southwest corner of square $(i,j)$, given the partitions assigned to its other three corners.
 
 \begin{itemize}
 \item[A.] If $NW\neq SE$ then let $SW=NW\cap SE,$ the partition whose $i$th entry is the minimum of the $i$th entries of $NW$ and $SE.$
 \item[B.] If $NW=NE=SE$ then let $SW=NW.$
 \item[C.] If $NE\neq NW=SE$ and $NE$ differs from $NW$ in the $i$th entry for some $i\geq 2,$ then let $SW$ be obtained from $NW$ by subtracting 1 from the $(i-1)$th entry of $NW.$ If  $NE$ differs from $NW$ in the first entry, then let $SW=NW$ and, \emph{in this circumstance only}, place a marker in square $(i,j).$
 \end{itemize}
 
 Since the application of these rules recovers the growth diagram and the placement $P$, it follows that, in the growth diagram, square $(i,j)$ has a marker in it (i.e., is in $P$) if and only if $NE\neq NW=SE$ and $NE$ differs from $NW$ in the first entry.
 
 \vspace{.25in}
 \noindent\textbf{3. The Reformulation of $\phi^*$}
 
 \vspace{.25in}
 
 We now modify the growth diagram algorithm ($GDA$) of subsection 2.3 to get a new algorithm $GDA_k$ for any $k\geq 2.$  $GDA_k$ retains rules (1) and (3) of $GDA$, but replaces rule (2) by the following variant.
 
 \vspace{.15in}
 
\noindent $2_k.$ Apply rule (2) with the proviso that if rule (2) produces a $NE$ with $k$ (nonzero) entries then delete the last entry and increase the first entry by 1.
 
 \vspace{.15in}
 
The motivation for rule $(2_k)$ comes from the theorem of Schensted mentioned in subsection 2.1. Keeping the number of entries in a partition $\lambda$ less than $k$ prevents decreasing subsequences of length $k$ in partial permutations whose Robinsion-Schensted tableaux have shape $\lambda$.

\begin{definitions}  
For any rook placement $P$ on a Ferrers board $F$, let seq$(P,F)$ (respectively, seq$_k(P,F)$) denote the sequence of partitions along the right/up border of $F$ that results from the application of $GDA$ (respectively, $GDA_k$) to $P$ and $F$.
 \end{definitions}
 
 \noindent\textbf{Main Theorem}. Fix $k\geq 2$ in the definition of $\phi^*$.  Then for any rook placement $P$ on a Ferrers board $F$, $$\textrm{seq}_k(P,F)=\textrm{seq}(\phi^*(P),F).$$
 
 \vspace{.15in}
 
 \noindent\textbf{Corollary}. For any rook placement $P$ on a Ferrers board $F$, $$\phi^*(P')=(\phi^*(P))'.$$
 \emph{Proof.} This is essentially clear from the fact that the algorithms $GDA$ and $GDA_k$ commute with the operation of taking the inverse of a placement.
 
 In a bit more detail, seq$_k(P',F')$ is the reverse of seq$_k(P,F)$, so, by the Main Theorem, seq$(\phi^*(P'),F')$ is the reverse of seq$(\phi^*(P),F)$, and this reverse is seq$((\phi^*(P))',F')$.  By rules (A), (B), and (C) for the inverse algorithm for $GDA$,  we conclude that $\phi^*(P')=(\phi^*(P))'.$   $\Box$
 
 \vspace{.15in}
 
 We will now give an example, to illustrate the Main Theorem and indicate the structure of its proof.
 
 \vspace{.15in}
 
 \noindent\textbf{Example.}  Let $P$ be the placement from the example in subsection 2.3, and fix $k=3$ in the definition of $\phi^*.$ Performing $GDA_3$ on $P$ yields

\begin{figure}[h]
\begin{center}
\ifx\JPicScale\undefined\def\JPicScale{.75}\fi
\unitlength \JPicScale mm
\begin{picture}(90,90)(0,0)
\linethickness{0.1mm}
\put(10,10){\line(1,0){80}}
\linethickness{0.1mm}
\put(10,90){\line(1,0){30}}
\linethickness{0.1mm}
\put(10,80){\line(1,0){50}}
\linethickness{0.1mm}
\put(10,60){\line(1,0){80}}
\linethickness{0.1mm}
\put(10,50){\line(1,0){80}}
\linethickness{0.1mm}
\put(10,40){\line(1,0){80}}
\linethickness{0.1mm}
\put(10,30){\line(1,0){80}}
\linethickness{0.1mm}
\put(10,20){\line(1,0){80}}
\linethickness{0.1mm}
\put(90,10){\line(0,1){50}}
\linethickness{0.1mm}
\put(60,10){\line(0,1){70}}
\linethickness{0.1mm}
\put(10,10){\line(0,1){80}}
\put(35,84.64){\makebox(0,0)[cc]{\large{$\bullet$}}}

\put(55.37,74.63){\makebox(0,0)[cc]{\large{$\bullet$}}}

\put(45,65){\makebox(0,0)[cc]{\large{$\bullet$}}}

\put(25,55){\makebox(0,0)[cc]{\large{$\bullet$}}}

\put(15,45){\makebox(0,0)[cc]{\large{$\bullet$}}}

\put(65,34.64){\makebox(0,0)[cc]{\large{$\bullet$}}}

\put(75.06,14.22){\makebox(0,0)[cc]{\large{$\bullet$}}}

\put(85.07,24.94){\makebox(0,0)[cc]{\large{$\bullet$}}}

\linethickness{0.1mm}
\put(80,10){\line(0,1){50}}
\linethickness{0.1mm}
\put(70,10){\line(0,1){60}}
\linethickness{0.1mm}
\put(10,70){\line(1,0){60}}
\linethickness{0.1mm}
\put(40,10){\line(0,1){80}}
\linethickness{0.1mm}
\put(50,10){\line(0,1){70}}
\linethickness{0.1mm}
\put(30,10){\line(0,1){80}}
\linethickness{0.1mm}
\put(20,10){\line(0,1){80}}

\put(18,48){\makebox(0,0)[cc]{\scriptsize{1}}}

\put(28,48){\makebox(0,0)[cc]{\scriptsize{1}}}

\put(48,48){\makebox(0,0)[cc]{\scriptsize{1}}}

\put(58,48){\makebox(0,0)[cc]{\scriptsize{1}}}

\put(38,48){\makebox(0,0)[cc]{\scriptsize{1}}}

\put(68.21,38){\makebox(0,0)[cc]{\scriptsize{1}}}

\put(78,28){\makebox(0,0)[cc]{\scriptsize{1}}}

\put(88,18){\makebox(0,0)[cc]{\scriptsize{1}}}

\put(78,18){\makebox(0,0)[cc]{\scriptsize{1}}}

\put(18,58){\makebox(0,0)[cc]{\scriptsize{1}}}

\put(18,68){\makebox(0,0)[cc]{\scriptsize{1}}}

\put(18,78){\makebox(0,0)[cc]{\scriptsize{1}}}

\put(18,88){\makebox(0,0)[cc]{\scriptsize{1}}}

\put(28,88){\makebox(0,0)[cc]{\scriptsize{2}}}

\put(28,78){\makebox(0,0)[cc]{\scriptsize{2}}}

\put(28,68){\makebox(0,0)[cc]{\scriptsize{2}}}

\put(28,58){\makebox(0,0)[cc]{\scriptsize{2}}}

\put(38,78){\makebox(0,0)[cc]{\scriptsize{2}}}

\put(38,68){\makebox(0,0)[cc]{\scriptsize{2}}}

\put(38,58){\makebox(0,0)[cc]{\scriptsize{2}}}

\put(48,58){\makebox(0,0)[cc]{\scriptsize{2}}}

\put(58,58){\makebox(0,0)[cc]{\scriptsize{2}}}

\put(88,28){\makebox(0,0)[cc]{\scriptsize{2}}}

\put(38,88){\makebox(0,0)[cc]{\scriptsize{3}}}

\put(48,78){\makebox(0,0)[cc]{\scriptsize{3}}}

\put(48,68){\makebox(0,0)[cc]{\scriptsize{3}}}

\put(58,68){\makebox(0,0)[cc]{\scriptsize{3}}}

\put(58,78){\makebox(0,0)[cc]{\scriptsize{4}}}

\put(67.38,48){\makebox(0,0)[cc]{\scriptsize{11}}}

\put(77.27,38){\makebox(0,0)[cc]{\scriptsize{11}}}

\put(87.38,38){\makebox(0,0)[cc]{\scriptsize{21}}}

\put(87.38,48){\makebox(0,0)[cc]{\scriptsize{22}}}

\put(77.38,58){\makebox(0,0)[cc]{\scriptsize{22}}}

\put(87.38,58){\makebox(0,0)[cc]{\scriptsize{32}}}

\put(77.38,48){\makebox(0,0)[cc]{\scriptsize{21}}}

\put(67.38,58){\makebox(0,0)[cc]{\scriptsize{21}}}

\put(67.38,68){\makebox(0,0)[cc]{\scriptsize{31}}}

\put(8,88.45){\makebox(0,0)[cc]{\scriptsize{$\emptyset$}}}

\put(8,77.96){\makebox(0,0)[cc]{\scriptsize{$\emptyset$}}}

\put(8,68.07){\makebox(0,0)[cc]{\scriptsize{$\emptyset$}}}

\put(8,58.32){\makebox(0,0)[cc]{\scriptsize{$\emptyset$}}}

\put(8,47.83){\makebox(0,0)[cc]{\scriptsize{$\emptyset$}}}

\put(8,38){\makebox(0,0)[cc]{\scriptsize{$\emptyset$}}}

\put(18,38){\makebox(0,0)[cc]{\scriptsize{$\emptyset$}}}

\put(18,28){\makebox(0,0)[cc]{\scriptsize{$\emptyset$}}}

\put(18,18){\makebox(0,0)[cc]{\scriptsize{$\emptyset$}}}

\put(28,38){\makebox(0,0)[cc]{\scriptsize{$\emptyset$}}}

\put(28,28){\makebox(0,0)[cc]{\scriptsize{$\emptyset$}}}

\put(28,18){\makebox(0,0)[cc]{\scriptsize{$\emptyset$}}}

\put(38,38){\makebox(0,0)[cc]{\scriptsize{$\emptyset$}}}

\put(38,28){\makebox(0,0)[cc]{\scriptsize{$\emptyset$}}}

\put(38,18){\makebox(0,0)[cc]{\scriptsize{$\emptyset$}}}

\put(48,38){\makebox(0,0)[cc]{\scriptsize{$\emptyset$}}}

\put(48,28){\makebox(0,0)[cc]{\scriptsize{$\emptyset$}}}

\put(48,18){\makebox(0,0)[cc]{\scriptsize{$\emptyset$}}}

\put(58,38){\makebox(0,0)[cc]{\scriptsize{$\emptyset$}}}

\put(58,28){\makebox(0,0)[cc]{\scriptsize{$\emptyset$}}}

\put(58,18){\makebox(0,0)[cc]{\scriptsize{$\emptyset$}}}

\put(8,28){\makebox(0,0)[cc]{\scriptsize{$\emptyset$}}}

\put(8,18){\makebox(0,0)[cc]{\scriptsize{$\emptyset$}}}

\put(68,28){\makebox(0,0)[cc]{\scriptsize{$\emptyset$}}}

\put(68,18){\makebox(0,0)[cc]{\scriptsize{$\emptyset$}}}

\put(18,8){\makebox(0,0)[cc]{\scriptsize{$\emptyset$}}}

\put(28,8){\makebox(0,0)[cc]{\scriptsize{$\emptyset$}}}

\put(38,8){\makebox(0,0)[cc]{\scriptsize{$\emptyset$}}}

\put(48,8){\makebox(0,0)[cc]{\scriptsize{$\emptyset$}}}

\put(58,8){\makebox(0,0)[cc]{\scriptsize{$\emptyset$}}}

\put(8,8){\makebox(0,0)[cc]{\scriptsize{$\emptyset$}}}

\put(68,8){\makebox(0,0)[cc]{\scriptsize{$\emptyset$}}}

\put(78,8){\makebox(0,0)[cc]{\scriptsize{$\emptyset$}}}

\put(88,8){\makebox(0,0)[cc]{\scriptsize{$\emptyset$}}}

\end{picture}
\end{center}
\end{figure}
 
 \noindent and performing $GDA$ on $\phi^*(P)$ yields
 
\begin{figure}[h!]
\begin{center}
\ifx\JPicScale\undefined\def\JPicScale{.75}\fi
\unitlength \JPicScale mm
\begin{picture}(90,100)(0,0)
\linethickness{0.1mm}
\put(10,10){\line(1,0){80}}
\linethickness{0.1mm}
\put(10,90){\line(1,0){30}}
\linethickness{0.1mm}
\put(10,80){\line(1,0){50}}
\linethickness{0.1mm}
\put(10,60){\line(1,0){80}}
\linethickness{0.1mm}
\put(10,50){\line(1,0){80}}
\linethickness{0.1mm}
\put(10,40){\line(1,0){80}}
\linethickness{0.1mm}
\put(10,30){\line(1,0){80}}
\linethickness{0.1mm}
\put(10,20){\line(1,0){80}}
\linethickness{0.1mm}
\put(90,10){\line(0,1){50}}
\linethickness{0.1mm}
\put(60,10){\line(0,1){70}}
\linethickness{0.1mm}
\put(10,10){\line(0,1){80}}

\put(15,34.5){\makebox(0,0)[cc]{\large{$\bullet$}}}
\put(25,44.5){\makebox(0,0)[cc]{\large{$\bullet$}}}
\put(35,84.5){\makebox(0,0)[cc]{\large{$\bullet$}}}
\put(45,65){\makebox(0,0)[cc]{\large{$\bullet$}}}
\put(55,74.5){\makebox(0,0)[cc]{\large{$\bullet$}}}
\put(65,14.5){\makebox(0,0)[cc]{\large{$\bullet$}}}
\put(75,24.5){\makebox(0,0)[cc]{\large{$\bullet$}}}
\put(85,54.5){\makebox(0,0)[cc]{\large{$\bullet$}}}

\linethickness{0.1mm}
\put(80,10){\line(0,1){50}}
\linethickness{0.1mm}
\put(70,10){\line(0,1){60}}
\linethickness{0.1mm}
\put(10,70){\line(1,0){60}}
\linethickness{0.1mm}
\put(40,10){\line(0,1){80}}
\linethickness{0.1mm}
\put(50,10){\line(0,1){70}}
\linethickness{0.1mm}
\put(30,10){\line(0,1){80}}
\linethickness{0.1mm}
\put(20,10){\line(0,1){80}}

\put(8,7.67){\makebox(0,0)[cc]{\scriptsize{$\emptyset$}}}
\put(18,7.67){\makebox(0,0)[cc]{\scriptsize{$\emptyset$}}}
\put(28,7.67){\makebox(0,0)[cc]{\scriptsize{$\emptyset$}}}
\put(38,7.67){\makebox(0,0)[cc]{\scriptsize{$\emptyset$}}}
\put(48,7.67){\makebox(0,0)[cc]{\scriptsize{$\emptyset$}}}
\put(58,7.67){\makebox(0,0)[cc]{\scriptsize{$\emptyset$}}}
\put(68,7.67){\makebox(0,0)[cc]{\scriptsize{$\emptyset$}}}
\put(78,7.67){\makebox(0,0)[cc]{\scriptsize{$\emptyset$}}}
\put(88,7.67){\makebox(0,0)[cc]{\scriptsize{$\emptyset$}}}

\put(8,17.67){\makebox(0,0)[cc]{\scriptsize{$\emptyset$}}}
\put(18,17.67){\makebox(0,0)[cc]{\scriptsize{$\emptyset$}}}
\put(28,17.67){\makebox(0,0)[cc]{\scriptsize{$\emptyset$}}}
\put(38,17.67){\makebox(0,0)[cc]{\scriptsize{$\emptyset$}}}
\put(48,17.67){\makebox(0,0)[cc]{\scriptsize{$\emptyset$}}}
\put(58,17.67){\makebox(0,0)[cc]{\scriptsize{$\emptyset$}}}
\put(68,17.67){\makebox(0,0)[cc]{\scriptsize{1}}}
\put(78,17.67){\makebox(0,0)[cc]{\scriptsize{1}}}
\put(88,17.67){\makebox(0,0)[cc]{\scriptsize{1}}}

\put(8,27.67){\makebox(0,0)[cc]{\scriptsize{$\emptyset$}}}
\put(18,27.67){\makebox(0,0)[cc]{\scriptsize{$\emptyset$}}}
\put(28,27.67){\makebox(0,0)[cc]{\scriptsize{$\emptyset$}}}
\put(38,27.67){\makebox(0,0)[cc]{\scriptsize{$\emptyset$}}}
\put(48,27.67){\makebox(0,0)[cc]{\scriptsize{$\emptyset$}}}
\put(58,27.67){\makebox(0,0)[cc]{\scriptsize{$\emptyset$}}}
\put(68,27.67){\makebox(0,0)[cc]{\scriptsize{1}}}
\put(78,27.67){\makebox(0,0)[cc]{\scriptsize{2}}}
\put(88,27.67){\makebox(0,0)[cc]{\scriptsize{2}}}

\put(8,37.67){\makebox(0,0)[cc]{\scriptsize{$\emptyset$}}}
\put(18,37.67){\makebox(0,0)[cc]{\scriptsize{1}}}
\put(28,37.67){\makebox(0,0)[cc]{\scriptsize{1}}}
\put(38,37.67){\makebox(0,0)[cc]{\scriptsize{1}}}
\put(48,37.67){\makebox(0,0)[cc]{\scriptsize{1}}}
\put(58,37.67){\makebox(0,0)[cc]{\scriptsize{1}}}
\put(68,37.67){\makebox(0,0)[cc]{\scriptsize{11}}}
\put(77.4,37.67){\makebox(0,0)[cc]{\scriptsize{21}}}
\put(87.4,37.67){\makebox(0,0)[cc]{\scriptsize{21}}}

\put(8,47.83){\makebox(0,0)[cc]{\scriptsize{$\emptyset$}}}
\put(18,47.67){\makebox(0,0)[cc]{\scriptsize{1}}}
\put(28,47.67){\makebox(0,0)[cc]{\scriptsize{2}}}
\put(38,47.67){\makebox(0,0)[cc]{\scriptsize{2}}}
\put(48,47.67){\makebox(0,0)[cc]{\scriptsize{2}}}
\put(58,47.67){\makebox(0,0)[cc]{\scriptsize{2}}}
\put(67.4,47.67){\makebox(0,0)[cc]{\scriptsize{21}}}
\put(77.4,47.67){\makebox(0,0)[cc]{\scriptsize{22}}}
\put(87.4,47.67){\makebox(0,0)[cc]{\scriptsize{22}}}

\put(8,58.32){\makebox(0,0)[cc]{\scriptsize{$\emptyset$}}}
\put(18,57.67){\makebox(0,0)[cc]{\scriptsize{1}}}
\put(28,57.67){\makebox(0,0)[cc]{\scriptsize{2}}}
\put(38,57.67){\makebox(0,0)[cc]{\scriptsize{2}}}
\put(48,57.67){\makebox(0,0)[cc]{\scriptsize{2}}}
\put(58,57.67){\makebox(0,0)[cc]{\scriptsize{2}}}
\put(67.4,57.67){\makebox(0,0)[cc]{\scriptsize{21}}}
\put(77.4,57.67){\makebox(0,0)[cc]{\scriptsize{22}}}
\put(87.4,57.67){\makebox(0,0)[cc]{\scriptsize{32}}}

\put(8,68.07){\makebox(0,0)[cc]{\scriptsize{$\emptyset$}}}
\put(18,67.67){\makebox(0,0)[cc]{\scriptsize{1}}}
\put(28,67.67){\makebox(0,0)[cc]{\scriptsize{2}}}
\put(38,67.67){\makebox(0,0)[cc]{\scriptsize{2}}}
\put(48,67.67){\makebox(0,0)[cc]{\scriptsize{3}}}
\put(58,67.67){\makebox(0,0)[cc]{\scriptsize{3}}}
\put(66.9,67.67){\makebox(0,0)[cc]{\scriptsize{31}}}

\put(8,77.96){\makebox(0,0)[cc]{\scriptsize{$\emptyset$}}}
\put(18,77.67){\makebox(0,0)[cc]{\scriptsize{1}}}
\put(28,77.67){\makebox(0,0)[cc]{\scriptsize{2}}}
\put(38,77.67){\makebox(0,0)[cc]{\scriptsize{2}}}
\put(48,77.67){\makebox(0,0)[cc]{\scriptsize{3}}}
\put(58,77.67){\makebox(0,0)[cc]{\scriptsize{4}}}

\put(8,88.45){\makebox(0,0)[cc]{\scriptsize{$\emptyset$}}}
\put(18,87.67){\makebox(0,0)[cc]{\scriptsize{1}}}
\put(28,87.67){\makebox(0,0)[cc]{\scriptsize{2}}}
\put(38,87.67){\makebox(0,0)[cc]{\scriptsize{3}}}

\end{picture}
\end{center}
\end{figure}
 
\newpage
 
\noindent The partitions along the right/up border of $F$ are the same in both cases, so seq$_3(P,F)$=seq$(\phi^*(P),F)$, although the partitions in the interiors of the diagrams are not always the same. 

In this example, performing $GDA_3$ on $\phi(P)$ yields
\begin{figure}[h!]
\begin{center}
\ifx\JPicScale\undefined\def\JPicScale{.75}\fi
\unitlength \JPicScale mm
\begin{picture}(90,100)(0,0)
\linethickness{0.1mm}
\put(10,10){\line(1,0){80}}
\linethickness{0.1mm}
\put(10,90){\line(1,0){30}}
\linethickness{0.1mm}
\put(10,80){\line(1,0){50}}
\linethickness{0.1mm}
\put(10,60){\line(1,0){80}}
\linethickness{0.1mm}
\put(10,50){\line(1,0){80}}
\linethickness{0.1mm}
\put(10,40){\line(1,0){80}}
\linethickness{0.1mm}
\put(10,30){\line(1,0){80}}
\linethickness{0.1mm}
\put(10,20){\line(1,0){80}}
\linethickness{0.1mm}
\put(90,10){\line(0,1){50}}
\linethickness{0.1mm}
\put(60,10){\line(0,1){70}}
\linethickness{0.1mm}
\put(10,10){\line(0,1){80}}
\put(35,84.5){\makebox(0,0)[cc]{\large{$\bullet$}}}

\put(55,74.5){\makebox(0,0)[cc]{\large{$\bullet$}}}

\put(45,65){\makebox(0,0)[cc]{\large{$\bullet$}}}

\put(25,54.5){\makebox(0,0)[cc]{\large{$\bullet$}}}

\put(15,34.5){\makebox(0,0)[cc]{\large{$\bullet$}}}

\put(75,44.5){\makebox(0,0)[cc]{\large{$\bullet$}}}

\put(65,14.5){\makebox(0,0)[cc]{\large{$\bullet$}}}

\put(85,24.5){\makebox(0,0)[cc]{\large{$\bullet$}}}

\linethickness{0.1mm}
\put(80,10){\line(0,1){50}}
\linethickness{0.1mm}
\put(70,10){\line(0,1){60}}
\linethickness{0.1mm}
\put(10,70){\line(1,0){60}}
\linethickness{0.1mm}
\put(40,10){\line(0,1){80}}
\linethickness{0.1mm}
\put(50,10){\line(0,1){70}}
\linethickness{0.1mm}
\put(30,10){\line(0,1){80}}
\linethickness{0.1mm}
\put(20,10){\line(0,1){80}}

\put(8,7.67){\makebox(0,0)[cc]{\scriptsize{$\emptyset$}}}
\put(18,7.67){\makebox(0,0)[cc]{\scriptsize{$\emptyset$}}}
\put(28,7.67){\makebox(0,0)[cc]{\scriptsize{$\emptyset$}}}
\put(38,7.67){\makebox(0,0)[cc]{\scriptsize{$\emptyset$}}}
\put(48,7.67){\makebox(0,0)[cc]{\scriptsize{$\emptyset$}}}
\put(58,7.67){\makebox(0,0)[cc]{\scriptsize{$\emptyset$}}}
\put(68,7.67){\makebox(0,0)[cc]{\scriptsize{$\emptyset$}}}
\put(78,7.67){\makebox(0,0)[cc]{\scriptsize{$\emptyset$}}}
\put(88,7.67){\makebox(0,0)[cc]{\scriptsize{$\emptyset$}}}

\put(8,17.67){\makebox(0,0)[cc]{\scriptsize{$\emptyset$}}}
\put(18,17.67){\makebox(0,0)[cc]{\scriptsize{$\emptyset$}}}
\put(28,17.67){\makebox(0,0)[cc]{\scriptsize{$\emptyset$}}}
\put(38,17.67){\makebox(0,0)[cc]{\scriptsize{$\emptyset$}}}
\put(48,17.67){\makebox(0,0)[cc]{\scriptsize{$\emptyset$}}}
\put(58,17.67){\makebox(0,0)[cc]{\scriptsize{$\emptyset$}}}
\put(68,17.67){\makebox(0,0)[cc]{\scriptsize{1}}}
\put(78,17.67){\makebox(0,0)[cc]{\scriptsize{1}}}
\put(88,17.67){\makebox(0,0)[cc]{\scriptsize{1}}}

\put(8,27.67){\makebox(0,0)[cc]{\scriptsize{$\emptyset$}}}
\put(18,27.67){\makebox(0,0)[cc]{\scriptsize{$\emptyset$}}}
\put(28,27.67){\makebox(0,0)[cc]{\scriptsize{$\emptyset$}}}
\put(38,27.67){\makebox(0,0)[cc]{\scriptsize{$\emptyset$}}}
\put(48,27.67){\makebox(0,0)[cc]{\scriptsize{$\emptyset$}}}
\put(58,27.67){\makebox(0,0)[cc]{\scriptsize{$\emptyset$}}}
\put(68,27.67){\makebox(0,0)[cc]{\scriptsize{1}}}
\put(78,27.67){\makebox(0,0)[cc]{\scriptsize{1}}}
\put(88,27.67){\makebox(0,0)[cc]{\scriptsize{2}}}

\put(8,37.67){\makebox(0,0)[cc]{\scriptsize{$\emptyset$}}}
\put(18,37.67){\makebox(0,0)[cc]{\scriptsize{1}}}
\put(28,37.67){\makebox(0,0)[cc]{\scriptsize{1}}}
\put(38,37.67){\makebox(0,0)[cc]{\scriptsize{1}}}
\put(48,37.67){\makebox(0,0)[cc]{\scriptsize{1}}}
\put(58,37.67){\makebox(0,0)[cc]{\scriptsize{1}}}
\put(68,37.67){\makebox(0,0)[cc]{\scriptsize{11}}}
\put(77.4,37.67){\makebox(0,0)[cc]{\scriptsize{11}}}
\put(87.4,37.67){\makebox(0,0)[cc]{\scriptsize{21}}}

\put(8,47.83){\makebox(0,0)[cc]{\scriptsize{$\emptyset$}}}
\put(18,47.67){\makebox(0,0)[cc]{\scriptsize{1}}}
\put(28,47.67){\makebox(0,0)[cc]{\scriptsize{1}}}
\put(38,47.67){\makebox(0,0)[cc]{\scriptsize{1}}}
\put(48,47.67){\makebox(0,0)[cc]{\scriptsize{1}}}
\put(58,47.67){\makebox(0,0)[cc]{\scriptsize{1}}}
\put(67.4,47.67){\makebox(0,0)[cc]{\scriptsize{11}}}
\put(77.4,47.67){\makebox(0,0)[cc]{\scriptsize{21}}}
\put(87.4,47.67){\makebox(0,0)[cc]{\scriptsize{22}}}

\put(8,58.32){\makebox(0,0)[cc]{\scriptsize{$\emptyset$}}}
\put(18,57.67){\makebox(0,0)[cc]{\scriptsize{1}}}
\put(28,57.67){\makebox(0,0)[cc]{\scriptsize{2}}}
\put(38,57.67){\makebox(0,0)[cc]{\scriptsize{2}}}
\put(48,57.67){\makebox(0,0)[cc]{\scriptsize{2}}}
\put(58,57.67){\makebox(0,0)[cc]{\scriptsize{2}}}
\put(67.4,57.67){\makebox(0,0)[cc]{\scriptsize{21}}}
\put(77.4,57.67){\makebox(0,0)[cc]{\scriptsize{22}}}
\put(87.4,57.67){\makebox(0,0)[cc]{\scriptsize{32}}}

\put(8,68.07){\makebox(0,0)[cc]{\scriptsize{$\emptyset$}}}
\put(18,67.67){\makebox(0,0)[cc]{\scriptsize{1}}}
\put(28,67.67){\makebox(0,0)[cc]{\scriptsize{2}}}
\put(38,67.67){\makebox(0,0)[cc]{\scriptsize{2}}}
\put(48,67.67){\makebox(0,0)[cc]{\scriptsize{3}}}
\put(58,67.67){\makebox(0,0)[cc]{\scriptsize{3}}}
\put(66.9,67.67){\makebox(0,0)[cc]{\scriptsize{31}}}

\put(8,77.96){\makebox(0,0)[cc]{\scriptsize{$\emptyset$}}}
\put(18,77.67){\makebox(0,0)[cc]{\scriptsize{1}}}
\put(28,77.67){\makebox(0,0)[cc]{\scriptsize{2}}}
\put(38,77.67){\makebox(0,0)[cc]{\scriptsize{2}}}
\put(48,77.67){\makebox(0,0)[cc]{\scriptsize{3}}}
\put(58,77.67){\makebox(0,0)[cc]{\scriptsize{4}}}

\put(8,88.45){\makebox(0,0)[cc]{\scriptsize{$\emptyset$}}}
\put(18,87.67){\makebox(0,0)[cc]{\scriptsize{1}}}
\put(28,87.67){\makebox(0,0)[cc]{\scriptsize{2}}}
\put(38,87.67){\makebox(0,0)[cc]{\scriptsize{3}}}

\end{picture}
\end{center}
\end{figure}

\noindent Notice that, while the results of performing $GDA_3$ on $P$ and on $\phi(P)$ are not the same diagram, they do agree on the boundary of $R(7,4)$, the smallest rectangular subboard of $F$ that contains the 321-pattern on which $\phi$ acted and extends to the left and bottom edges of $F$. Because of the definition of $GDA_3$, this is enough to make the two diagrams agree everywhere outside the rectangle. The idea of the proof of the Main Theorem will be to show that performing $GDA_k$ on $P$ and on $\phi(P)$ yields the same partitions on the boundary of the smallest rectangular subboard of $F$ that contains the $k\ldots1$-pattern on which $\phi$ acted and extends to the left and bottom edges of $F$.

\vspace{.15in}

\noindent\emph{Proof of the Main Theorem.}   We proceed by induction on the number of applications of $\phi$ required to compute $\phi^*(P).$  If no applications are required, the result is obvious, since $\phi^*(P)=P$ and performing $GDA_k$ is the same as performing $GDA.$

Now suppose that $m$ applications of $\phi$ are required to compute $\phi^*(P).$ Since computing $\phi^*(\phi(P))$ requires only $m-1$ applications of $\phi$, we assume inductively that$$\textrm{seq}_k(\phi(P),F)=\textrm{seq}(\phi^*(\phi(P)),F),$$
i.e., that
$$\textrm{seq}_k(\phi(P),F)=\textrm{seq}(\phi^*(P),F).$$
We want to show that $\textrm{seq}_k(\phi(P),F)=\textrm{seq}_k(P,F).$

Let $R=R(a,b)$ be the smallest rectangular subboard of $F$ that contains the $k\ldots1$-pattern on which $\phi$ acted to produce $\phi(P)$ and extends to the left and bottom edges of $F$.  Let $P_R$ and $\phi(P)_R$ be the restrictions of $P$ and $\phi(P)$ to $R(a,b).$  By the definition of $GDA_k$, all we need to show is that 
$$\textrm{seq}_k(\phi(P)_R,R)=\textrm{seq}_k(P_R,R).$$  To show this, we will use the following two lemmas, which will be proved at the end of this section.

\begin{lemma}\label{lemma2}
The placement $P_R$ on $R$ contains no $k\ldots1$-pattern that begins in a row below the top row of  $R$ or ends in a column to the left of the rightmost column of $R$.
\end{lemma}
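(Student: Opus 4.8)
Recall the definition of $R=R(a,b)$: among all rectangular subboards of $F$ that extend to the left and bottom edges of $F$ and contain the particular occurrence of $k\ldots1$ on which $\phi$ acts, $R(a,b)$ is the smallest. Write the occurrence as the squares $(i_1,\sigma_{i_1}),\ldots,(i_k,\sigma_{i_k})$ with $i_1<\cdots<i_k$ and $\sigma_{i_1}>\cdots>\sigma_{i_k}$ (using $\sigma$ for the partial permutation of $P$). Minimality of $R(a,b)$ forces $a=i_k$ (the rightmost column used) and $b=\sigma_{i_1}$ (the topmost row used): if $a$ were strictly larger than $i_k$ we could shrink the rectangle to column $i_k$ and still contain the pattern and still reach the left edge, contradicting minimality, and similarly for $b$. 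So the claim to be proved is precisely that inside the rectangle $R(i_k,\sigma_{i_1})$, the placement $P_R$ has no $k\ldots1$-pattern beginning in a row strictly below row $\sigma_{i_1}$, and none ending in a column strictly left of column $i_k$.

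\textbf{The approach.} I would argue by contradiction, invoking the minimality of $R$ as the only real engine. Suppose $P_R$ contains a $k\ldots1$-pattern on squares $(j_1,\sigma_{j_1}),\ldots,(j_k,\sigma_{j_k})$ (with $j_1<\cdots<j_k\le i_k$ and $\sigma_{j_1}>\cdots>\sigma_{j_k}$, all rows $\le \sigma_{i_1}$) whose top square lies in a row below row $\sigma_{i_1}$, i.e. $\sigma_{j_1}<\sigma_{i_1}$. The key point is that this pattern, being entirely inside $R(i_k,\sigma_{i_1})\subseteq F$, is an occurrence of $k\ldots1$ in $P$ itself; and the smallest rectangle that contains it and extends to the left and bottom edges is $R(j_k,\sigma_{j_1})$. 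Since $j_k\le i_k$ and $\sigma_{j_1}<\sigma_{i_1}$, this rectangle is contained in $R(i_k,\sigma_{i_1})$ and is strictly smaller (strictly smaller in the height coordinate). Now I need to connect this to the minimality hypothesis on $R$: the definition of $\phi$ orders the $k\ldots1$-patterns of $P$ lexicographically by their values $\sigma_{i_1},\ldots,\sigma_{i_k}$ and acts on the smallest one; so the occurrence $(i_1,\sigma_{i_1}),\ldots$ has lexicographically least value-sequence among all $k\ldots1$-occurrences in $P$. Because $\sigma_{j_1}<\sigma_{i_1}$, the occurrence $(j_1,\sigma_{j_1}),\ldots$ has a strictly smaller first value, hence is lexicographically smaller than the one $\phi$ acted on — contradicting that $\phi$ acted on the lex-least occurrence. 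The "ends in a column left of column $i_k$" half is the mirror statement and I would handle it by the symmetric argument, this time comparing the rightmost columns: if the offending pattern ends in column $j_k<i_k$, then... here one must be careful, because $\phi$'s ordering is by values, not by column indices, so the column statement does not follow directly from lex-minimality of values.

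\textbf{Where the real work is — and the fix.} The genuine obstacle is exactly that asymmetry: lex-order on values controls the row statement cleanly, but not obviously the column statement. I would resolve it by a separate, more delicate argument for the column half. Suppose $P_R$ has a $k\ldots1$-pattern ending in column $j_k<i_k$. I want to produce from it, together with the original occurrence $(i_1,\sigma_{i_1}),\ldots,(i_k,\sigma_{i_k})$, a $k\ldots1$-occurrence in $P$ with lexicographically smaller value-sequence, contradicting minimality. The natural move is to splice: since the offending pattern lives in rows $\le\sigma_{i_1}$ and columns $\le j_k < i_k = a$, and $(i_1,\sigma_{i_1})$ is in column $i_1<i_k$ and row $\sigma_{i_1}=b$, one should be able to combine a prefix of one pattern with a suffix of the other to get a shorter-rectangle occurrence; the point is that $R(j_k,\sigma_{i_1})$ would be a strictly narrower rectangle still meeting the left and bottom edges, and if it contains any $k\ldots1$-occurrence at all then — since that occurrence has all values $\le b=\sigma_{i_1}$ and the original occurrence has first value exactly $\sigma_{i_1}$ — I must show its value-sequence is lexicographically $\le$ that of the original, with equality impossible because the original needs the full width $i_k>j_k$. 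That last comparison is the crux: I expect it reduces to showing that any $k\ldots1$-pattern confined to $R(j_k,b)$ either has first value $<b$ (immediate contradiction as before) or, if its first value is $b$, then its remaining values are componentwise $\le$ those of the original occurrence because both "start at the top" and are squeezed into a board of width less than $i_k$, so one can greedily replace. I would write this out carefully; everything else is bookkeeping with the definitions of "occurrence," "rectangular subboard extending to the edges," and the lexicographic rule defining $\phi$.
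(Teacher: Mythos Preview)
Your handling of the row half is correct and matches the paper: if a $k\ldots1$-pattern $S$ in $P_R$ has $\row(S_1)<\row(D_1)$ (where $D$ is the lex-least occurrence on which $\phi$ acts), then $S$ is already lex-smaller than $D$, contradicting the choice of $D$. In fact this forces $S_1=D_1$, since there is at most one marker per row.

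The column half, however, has a genuine gap. You correctly diagnose the asymmetry and correctly guess that the resolution is a splicing argument, but the specific claim you make---that any $k\ldots1$-pattern confined to the narrower rectangle $R(j_k,b)$ and starting at value $b$ must have its remaining values ``componentwise $\le$'' those of $D$ ``because both start at the top and are squeezed into a board of width less than $i_k$''---is not a valid argument. Being confined to a narrower board says nothing directly about the \emph{row heights} of $S_2,\ldots,S_k$ relative to $D_2,\ldots,D_k$; one can perfectly well have $\row(S_2)>\row(D_2)$ while $\col(S_k)<\col(D_k)$. Your ``greedily replace'' hint does not pin down what is being replaced by what or why the result is still a decreasing sequence.

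What the paper actually does is a two-pronged downward induction on the index $t$, proving $\col(S_t)<\col(D_{t-1})$ step by step. At each step it rules out $\row(S_{t-1})>\row(D_{t-1})$ by exhibiting the splice $S_2\cdots S_{t-1}D_{t-1}\cdots D_k$ (a length-$k$ decreasing sequence with first value $<\row(D_1)$), and then rules out $\col(S_{t-1})>\col(D_{t-2})$ by exhibiting the splice $D_1\cdots D_{t-2}S_{t-1}\cdots S_k$ (which agrees with $D$ in its first $t-2$ values and is then strictly smaller). The induction terminates at $\col(S_2)<\col(D_1)=\col(S_1)$, a contradiction. So the conclusion you aim for (that the values of $S$ end up componentwise below those of $D$) is in fact what emerges, but it requires this alternating row/column splice, not a width argument. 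You need to supply that induction.
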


\begin{lemma}\label{lemma3}
The placement $\phi(P)_R$ on $R$ contains no $k\ldots1$-pattern.
\end{lemma}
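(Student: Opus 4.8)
The plan is to reduce the claim to an internal statement about $P_R$ and then contradict the lexicographic minimality of the pattern $\phi$ acts on. First I would observe that $\phi(P)_R=\phi(P_R)$: by Lemma~\ref{lemma2} every $k\dots1$-pattern of $P_R$ begins at the unique marker of $P_R$ in the top row of $R$ and ends at the unique marker in its rightmost column, and since $R$ is itself a rectangular subboard of $F$ every $k\dots1$-pattern of $P_R$ is also one of $P$; hence the lex-first $k\dots1$-pattern of $P$ is also the lex-first one of $P_R$, and as $\phi$ only moves the cells of that pattern — all of which lie in $R$ — restriction commutes with $\phi$. Write the pattern as $D=\bigl((i_1,v_1),\dots,(i_k,v_k)\bigr)$, so $i_1<\cdots<i_k=a$ and $v_1=b>v_2>\cdots>v_k$; call $(i_1,v_2),(i_2,v_3),\dots,(i_{k-1},v_k),(i_k,v_1)$ the \emph{new} markers of $P':=\phi(P_R)$, and note every other marker of $P'$ is a marker of $P_R$ (an \emph{old} marker) lying in a column outside $\{i_1,\dots,i_k\}$ and a row below $v_1$.

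Now suppose $Q=\bigl((c_1,w_1),\dots,(c_k,w_k)\bigr)$ is a $k\dots1$-pattern in $P'$. It must use at least one new marker, for otherwise $Q\subseteq P_R$ and Lemma~\ref{lemma2} forces $Q$ to begin at $(i_1,v_1)\notin P'$; and it cannot use $(i_k,v_1)=(a,b)$, since a cell in the rightmost column of $R$ must be the last cell of a $k\dots1$-pattern while one in the top row must be the first, impossible for $k\ge2$. So the new markers appearing in $Q$ are $(i_{t_1},v_{t_1+1}),\dots,(i_{t_p},v_{t_p+1})$ with $1\le t_1<\cdots<t_p\le k-1$, $p\ge1$, at positions $j_1<\cdots<j_p$ of $Q$, and the other cells of $Q$ are old markers. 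Monotonicity of $Q$ then yields the interval facts I will use: the cells before position $j_1$ have column $<i_{t_1}$ and value in $(v_{t_1+1},v_1)$; the cells after position $j_p$ have column $>i_{t_p}$ and value $<v_{t_p+1}$; and the cells strictly between positions $j_r$ and $j_{r+1}$ have column in $(i_{t_r},i_{t_{r+1}})$ and value in $(v_{t_{r+1}+1},v_{t_r+1})$.

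The core step is to splice a stretch of $D$ onto part of $Q$ to produce a decreasing chain of $\ge k$ cells of $P_R$ whose value sequence is lexicographically below $(v_1,\dots,v_k)$, contradicting the choice of $D$. I would split into three cases. If $j_1\ge t_1+1$: take the cells of $Q$ in positions $1,\dots,j_1-1$ followed by the tail $(i_{t_1+1},v_{t_1+1}),\dots,(i_k,v_k)$ of $D$; its length is $(j_1-1)+(k-t_1)\ge k$ and its first value is $w_1<v_1$. If $j_p\le t_p$: take the head $(i_1,v_1),\dots,(i_{t_p},v_{t_p})$ of $D$ followed by the cells of $Q$ in positions $j_p+1,\dots,k$; its length is $t_p+(k-j_p)\ge k$, it agrees with $D$ through position $t_p$, and it has $w_{j_p+1}<v_{t_p+1}$ in position $t_p+1$. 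Otherwise $j_1\le t_1$ and $j_p\ge t_p+1$, so $p\ge2$; taking $r$ maximal with $j_r\le t_r$ gives $j_{r+1}\ge t_{r+1}+1$, hence $j_{r+1}-j_r\ge2$, and one splices the head $(i_1,v_1),\dots,(i_{t_r},v_{t_r})$ of $D$, then the nonempty block of old cells of $Q$ in positions $j_r+1,\dots,j_{r+1}-1$, then the tail $(i_{t_{r+1}+1},v_{t_{r+1}+1}),\dots,(i_k,v_k)$ of $D$; a short count gives length $\ge k$, and the chain matches $D$ through position $t_r$ and drops below it at position $t_r+1$. In every case the spliced sequence is strictly decreasing in both coordinates and lies in $R$, by the interval facts above.

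The routine part is exactly the monotonicity and length checks just sketched, together with the observation that the first $k$ cells of each spliced chain still give a $k\dots1$-pattern lexicographically below $D$. The step I expect to be the real obstacle is arranging the case split cleanly: the naive idea — lift each new marker $(i_t,v_{t+1})$ back to the $D$-marker $(i_{t+1},v_{t+1})$ of the same value — fails precisely when an old cell of $Q$ occupies a column gap $(i_t,i_{t+1})$, and the remedy is to detect a ``crossover'' $j_r\le t_r<t_{r+1}\le j_{r+1}-1$ among the used new markers and perform the splice there. Once this is recognized, the two boundary cases and the arithmetic fall out.
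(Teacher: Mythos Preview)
Your proof is correct and follows essentially the same route as the paper's. Both arguments observe that a hypothetical $k\ldots1$-pattern in $\phi(P)_R$ must use at least one ``new'' marker and cannot use the corner marker, then exploit the minimality of $D$ via splicing: where you do an explicit three-way case split (on whether $j_1>t_1$, $j_p\le t_p$, or there is a crossover index $r$) and in each case exhibit a lex-smaller $k\ldots1$-pattern, the paper instead bounds the number of old markers in each gap between consecutive new markers (their bounds $d_0\le m_1-2$, $d_s\le m_{s+1}-m_s-1$, $d_c\le k-m_c$ are exactly the contrapositives of your three cases) and sums to the contradiction $k\le k-1$. The two presentations are dual, and the underlying splices are identical.
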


 By Lemma \ref{lemma3}, $$\textrm{seq}_k(\phi(P)_R,R)=\textrm{seq}(\phi(P)_R,R).$$ By Lemma \ref{lemma2},  $\textrm{seq}_k(P_R,R)=\textrm{seq}(P_R,R)$ except possibly at the northeast corner of square $(a,b).$
 
 \vspace{.15in}
 \noindent\textbf{Notation.}   Let $c_{ne}$, $c_{nw}$, and $c_{se}$ denote the northeast, northwest, and southeast corners of square $(a,b).$
 
 \vspace{.15in}
 To conclude the proof, it will suffice to prove the next lemma.
 
 \vspace{.15in}
 
 \noindent\textbf{Main Lemma.}  We have $\textrm{seq}(P_R,R)=\textrm{seq}(\phi(P)_R,R)$ except possibly at $c_{ne}$, the northeast corner of $R$.
 
 \vspace{.15in}
 
 For once this lemma is established, we will have $$\textrm{seq}_k(\phi(P)_R,R)=\textrm{seq}_k(P_R,R)$$ except possibly at $c_{ne}.$ To see that the two also agree at $c_{ne}$, let, by the Main Lemma, $\lambda$ be the common value of $\textrm{seq}(P_R,R)$ and $\textrm{seq}(\phi(P)_R,R)$ at $c_{nw}.$ Since$$
 \textrm{seq}_k(\phi(P)_R,R)=\textrm{seq}(\phi(P)_R,R),$$
 the value of $\textrm{seq}_k(\phi(P)_R,R)$ at $c_{ne}$ is obtained from $\lambda$ by adding 1 to the first entry. (This follows from the last statement in subsection 2.3 and the fact that $\phi(P)_R$ has a marker in square $(a,b).$) To see that $\textrm{seq}_k(P_R,R)$ has the same value at $c_{ne}$, let $\mu$, $\nu$ be the values of $\textrm{seq}(P_R,R)$ at $c_{se}$, $c_{ne}.$ Since, in the placement $P_R$ on $R$, $R(a-1,b)$ and $R(a,b-1)$  contain no $k\ldots 1$-patterns (by Lemma \ref{lemma2}) but $R(a,b)$ contains such a pattern, it must be that $\nu$ has $k$ entries and each of $\lambda$, $\mu$ has $k-1.$ Therefore, by the definition of $GDA_k$, the value of $\textrm{seq}_k(P_R,R)$ at $c_{ne}$ is obtained by adding 1 to the first entry of $\lambda.$
 
 \vspace{.15in}
 
 The proof of the Main Theorem is now complete, except for the proofs of Lemmas \ref{lemma2} and \ref{lemma3} and the Main Lemma.  The proof of the Main Lemma will occupy Section 5.  We now turn to the proofs of Lemmas \ref{lemma2} and \ref{lemma3}.
 
 \vspace{.15in}
 \noindent\textbf{Notation.}  For a square $B=(i,j)$ in a Ferrers board $F$, we denote $i$ and $j$ by $\col(B)$ and $\row(B)$, respectively.
 
 \vspace{.15in}
 
 \noindent\emph{Proof of Lemma \ref{lemma2}.}  Let $D=D_1D_2\ldots D_k$ be the sequence of squares, from left to right, constituting the smallest occurrence of $k\ldots1$ in $P$. Let $S=S_1S_2\ldots S_k$ be another occurrence of $k\ldots 1$ in $P$.  By the minimality of $D$, we cannot have $\row(S_1)<\row(D_1)$, so $S$ cannot begin in a row below the top row of $R=R(a,b)$, and $S_1=D_1.$  Suppose for a contradiction that $S$ ends in a column to the left of the rightmost column of $R$, so $\col(S_k)<\col(D_k).$
 
 If $\row(S_k)>\row(D_k)$ then $S_2\ldots S_kD_k$ contradicts the minimality of $D$.
 Clearly $\row(S_k)\neq\row(D_k)$, because $S_k$ and $D_k$ are in different columns. So $\row(S_k)<\row(D_k).$ Therefore $S_k\neq D_{k-1}$, so $\col(S_k)\neq\col(D_{k-1}).$  If $\col(S_k)>\col(D_{k-1})$ then $D_1\ldots D_{k-1}S_k$ contradicts the minimality of $D$, since $\row (S_k)<\row(D_k)<\row(D_{k-1}).$ So $\col(S_k)<\col(D_{k-1}).$
 
 Now, assuming inductively that $\col(S_t)< \col(D_{t-1})$ for some $t\geq 3$,  we have $\col(S_{t-1})< \col(D_{t-1})$. We will show by essentially  the same argument as in the preceding paragraph that $\col(S_{t-1})< \col(D_{t-2})$.  First, if $\row(S_{t-1})>\row(D_{t-1})$ then $S_2\ldots S_{t-1}D_{t-1}\ldots D_k$ contradicts the minimality of $D$. So $\row (S_{t-1})< \row(D_{t-1}).$  If $\col(S_{t-1})> \col(D_{t-2})$ then $D_1\ldots D_{t-2}S_{t-1}\ldots S_k$ contradicts the minimality of $D$. Thus $\col(S_{t-1})< \col(D_{t-2}).$
 
 We conclude by induction that $\col(S_2)< \col(D_1)=\col(S_1)$, a contradiction.  $\Box$
 
 \vspace{.15in}
 
 \noindent\emph{Proof of Lemma \ref{lemma3}.}     Again let $D$ be the smallest occurrence of $k\ldots 1$ in $P$, and let $D^*_1,\ldots, D^*_k$ be the squares by which $D_1,\ldots, D_k$ are replaced in $\phi(P)_R.$  Assume for a contradiction that  $\phi(P)_R$ has a $k\ldots 1$-pattern, $S$, in $R$.  Let $c$ be the number of squares that $D^*=D^*_2\dots D^*_k$ has in common with $S$.  (Note that $D_1^*$ is the square in the northeast corner of $R$.) Let $d=k-c$ be the number of squares in $S$ that are not in $D^*.$ Observe that $c\geq 1$.  If this were not the case then $S$ would contain no square $D^*_i$  and therefore it would also be a $k\ldots 1$-pattern in $P_R$.    Then $\row(S_1)<\row (D^*_1)= \row(D_1)$ would contradict the minimality of $D$.  
 
 Now let the $c$ common squares, from left to right,  be $D^*_{m_j}$ for $1\leq j\leq c$. \\
 We group the other squares of $S$  according to their rows.  Let $d_0$ be the number of these other squares that are north of $D^*_{m_1}$ .  Let $d_s$ be the number of the other squares that are south of $D^*_{m_s}$ and north of $D^*_{m_{s+1}}$, for $1\leq s\leq c-1$.  Finally let $d_c$ be the number of the other squares that are south of $D^*_{m_c}$

Note that all the squares counted by $d_c$ must be to the right of $D_{m_c}$, so  we must have  $d_c\leq k-m_c$ because if there were $k-m_c+1$ squares $B_i$ counted by $d_c$ then the sequence  $D_1\ldots D_{m_c-1}$ followed by the $B_i$s would contradict the minimality of $D$.  Next, we must have $d_s\leq m_{s+1}-m_s-1$.  To see this note that the squares counted by $d_s$ must be west of the column containing $D^*_{m_{(s+1)}}$ and east of the column containing $D^*_{m_s}$.  Therefore if there were $m_{s+1}-m_s$ squares $B_i$ counted by $d_s$ the sequence $D_1D_2\ldots D_{m_s-1}$, followed by the $B_i$s and then $D_{m_{s+1}}\ldots D_k$ would contradict the minimality of $D$.   Finally, observe that $d_0\leq m_1-2$.  For if there were $m_1-1$ squares counted by $d_0$ then these $B_i$s followed by $D_{m_1}\ldots D_k$ would contradict the minimality of $D$.

Now when $c>1$ we have
\begin{align*}
k&=c+d \\
&= c+d_0+d_c+\sum_{s=1}^{c-1} d_s\\
&\leq c+(m_1-2)+(k-m_c)+\sum_{s=1}^{c-1}m_{s+1}-m_s-1\\
&\leq (c-1)+(k-1)+m_1-m_c+m_c-m_1-(c-1)\\
&= k-1.
\end{align*}
When $c=1$ we obtain the similar contradiction $$k=1+d_0+d_1\leq 1+(m_1-2)+(k-m_1)=k-1.$$  Therefore $\phi(P)_R$ cannot contain a $k\ldots 1$-pattern in $R$, as claimed.  $\Box$

\vspace{.25in}
\noindent \textbf{4. Pivots}

\vspace{.15in}

In this section we introduce the \emph{left} and \emph{right pivots} of rook placement on a rectangular Ferrers board, and show how they relate to the Robinson-Schensted correspondence and to $\phi^*$.  While the right pivots have nicer properties in connection with the RS correspondence, the left pivots have nicer properties with respect to $\phi^*$.  

\begin{definitions}
Let $P$ be a placement on rectangular Ferrers board $F$. We define the set of \emph{left pivots of $P$} (respectively, \emph{right pivots of $P$}) to be the placement $\pivL\ P$ (respectively, $\pivR\ P$) defined by inductively placing markers, row by row, from bottom to top, as follows.  

First, there is no pivot in the bottom row. Now consider row $r>1$. If there is no element of $P$ in row $r$ then we do not place a pivot in row $r$.  Now suppose $X\in P$ is in row $r$. If there is a column to the left (respectively, right) of $X$ that contains an element of $P$ below row $r$ but does not contain a pivot then we place a pivot in row $r$ and in the rightmost (respectively, leftmost) such column. 
\end{definitions}

\begin{notation}
For a placement $P$ on a rectangular Ferrers board $F$ define $\rev(P)$ to be the placement obtained by reflecting $F$ and $P$ along a vertical line.
\end{notation}

Below is an example of left and right pivots of a placement.

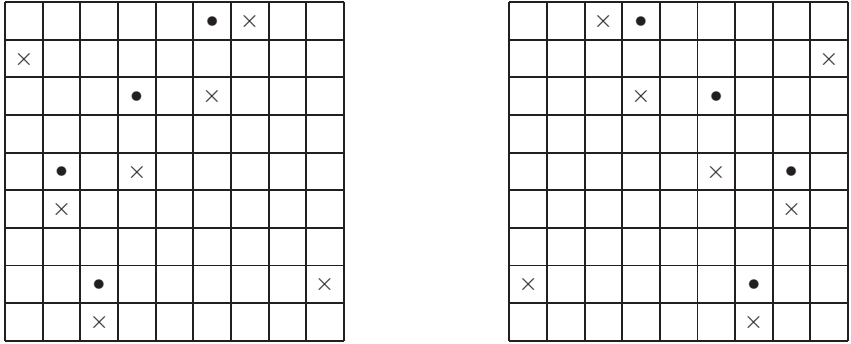
\begin{figure}[h!]
\begin{center}
\ifx\JPicScale\undefined\def\JPicScale{1}\fi
\unitlength \JPicScale mm
$\begin{array}{ccc}

\begin{picture}(60,45)(0,0)
\linethickness{0.1mm}

\put(0,0){\line(1,0){45}}
\put(0,5){\line(1,0){45}}
\put(0,10){\line(1,0){45}}
\put(0,15){\line(1,0){45}}
\put(0,20){\line(1,0){45}}
\put(0,25){\line(1,0){45}}
\put(0,30){\line(1,0){45}}
\put(0,35){\line(1,0){45}}
\put(0,40){\line(1,0){45}}
\put(0,45){\line(1,0){45}}

\put(0,0){\line(0,1){45}}
\put(5,0){\line(0,1){45}} 
\put(10,0){\line(0,1){45}}
\put(15,0){\line(0,1){45}}
\put(20,0){\line(0,1){45}}
\put(25,0){\line(0,1){45}}
\put(30,0){\line(0,1){45}}
\put(35,0){\line(0,1){45}}
\put(40,0){\line(0,1){45}}
\put(45,0){\line(0,1){45}}

\put(42.5,7.5){\makebox(0,0)[cc]{\small{$\times$}}}
\put(32.5,42.5){\makebox(0,0)[cc]{\small{$\times$}}}
\put(27.5,32.5){\makebox(0,0)[cc]{\small{$\times$}}}
\put(17.5,22.5){\makebox(0,0)[cc]{\small{$\times$}}}
\put(12.5,2.5){\makebox(0,0)[cc]{\small{$\times$}}}
\put(7.5,17.5){\makebox(0,0)[cc]{\small{$\times$}}}
\put(2.5,37.5){\makebox(0,0)[cc]{\small{$\times$}}}

\put(12.5,7.5){\makebox(0,0)[cc]{\small{$\bullet$}}}
\put(7.5,22.5){\makebox(0,0)[cc]{\small{$\bullet$}}}
\put(17.5,32.5){\makebox(0,0)[cc]{\small{$\bullet$}}}
\put(27.5,42.5){\makebox(0,0)[cc]{\small{$\bullet$}}}
\end{picture}
&&

\begin{picture}(60,45)(0,0)
\linethickness{0.1mm}

\put(0,0){\line(1,0)  {45}}
\put(0,5){\line(1,0)  {45}}
\put(0,10){\line(1,0){45}}
\put(0,15){\line(1,0){45}}
\put(0,20){\line(1,0){45}}
\put(0,25){\line(1,0){45}}
\put(0,30){\line(1,0){45}}
\put(0,35){\line(1,0){45}}
\put(0,40){\line(1,0){45}}
\put(0,45){\line(1,0){45}}

\put(0,0){\line(0,1)  {45}}
\put(5,0){\line(0,1)  {45}}
\put(10,0){\line(0,1){45}}
\put(15,0){\line(0,1){45}}
\put(20,0){\line(0,1){45}}
\put(25,0){\line(0,1){45}}
\put(30,0){\line(0,1){45}}
\put(35,0){\line(0,1){45}}
\put(40,0){\line(0,1){45}}
\put(45,0){\line(0,1){45}}


\put(2.5,7.5){\makebox(0,0)[cc]{\small{$\times$}}}
\put(12.5,42.5){\makebox(0,0)[cc]{\small{$\times$}}}
\put(17.5,32.5){\makebox(0,0)[cc]{\small{$\times$}}}
\put(27.5,22.5){\makebox(0,0)[cc]{\small{$\times$}}}
\put(32.5,2.5){\makebox(0,0)[cc]{\small{$\times$}}}
\put(37.5,17.5){\makebox(0,0)[cc]{\small{$\times$}}}
\put(42.5,37.5){\makebox(0,0)[cc]{\small{$\times$}}}

\put(32.5,7.5){\makebox(0,0)[cc]{\small{$\bullet$}}}
\put(37.5,22.5){\makebox(0,0)[cc]{\small{$\bullet$}}}
\put(27.5,32.5){\makebox(0,0)[cc]{\small{$\bullet$}}}
\put(17.5,42.5){\makebox(0,0)[cc]{\small{$\bullet$}}}
\end{picture}
\end{array}$
\caption{On the left we have $P$ and $\pivL(P)$.  On the right we have $\rev(P)$ and $\pivR(P)$.  In both examples elements of $P$ are denoted by $\times$ and the pivots are denoted by $\bullet$.}
\end{center}
\end{figure}

Looking at Figure 1 we see the following relationship between left and right pivots. Its proof is straightforward.
\begin{lemma}\label{lem_left_right_relation}
For any placement $P$ on a rectangular Ferrers board $F$ we have
$$\rev\Big(  \pivL\ P\Big) = \pivR\Big( \rev\ P\Big).$$
\end{lemma}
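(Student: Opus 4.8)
The plan is to show the two placements $\rev(\pivL\ P)$ and $\pivR(\rev\ P)$ coincide by an induction on rows, from bottom to top, exploiting the fact that $\rev$ interchanges the two notions of pivot by swapping ``left'' and ``right''. Write $n$ for the number of columns of $F$; then $\rev$ sends the square in column $c$ and row $r$ to the square in column $n+1-c$ and row $r$, so it preserves rows, reverses the left-to-right order of the columns, and is an involution on placements (so it suffices to prove the displayed identity in one direction).

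First I would record the elementary observations that drive the induction: (i) $P$ has a marker in a given row $r$ if and only if $\rev\ P$ does, and in that case these markers lie in columns $c$ and $n+1-c$ respectively; (ii) a column $c$ contains a marker of $P$ strictly below row $r$ if and only if column $n+1-c$ contains a marker of $\rev\ P$ strictly below row $r$; and (iii) assuming there is a marker $X$ of $P$ in row $r$, a column lies to the \emph{left} of $X$ if and only if its image under $\rev$ lies to the \emph{right} of $\rev(X)$, and under this correspondence the \emph{rightmost} column to the left of $X$ is carried to the \emph{leftmost} column to the right of $\rev(X)$.

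The induction is then: assume that for every row $r'<r$, the reflection $\rev$ carries the pivots that $\pivL$ places in row $r'$ when applied to $P$ onto the pivots that $\pivR$ places in row $r'$ when applied to $\rev\ P$. Applying (iii) to the already-placed pivots in rows below $r$, a column $c$ is pivot-free for $\pivL\ P$ below row $r$ precisely when column $n+1-c$ is pivot-free for $\pivR(\rev\ P)$ below row $r$. Combining this with (i) and (ii), the precise condition under which $\pivL$ places a pivot in row $r$ for $P$ — namely, the existence of a marker in row $r$ together with a column to its left that holds a marker below row $r$ but no pivot — holds if and only if the corresponding condition for $\pivR$ on $\rev\ P$ holds, and by (iii) the chosen (rightmost/leftmost) column on the one side is carried to the chosen column on the other. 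The base case is the bottom row, where neither construction places a pivot, so the induction starts trivially, and this completes the proof.

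There is no genuine obstacle here; the only point requiring care is the bookkeeping of the column-reversal bijection — in particular, checking that the induction hypothesis is stated in exactly the form needed so that step (iii) applies to the pivots already placed in the rows below $r$. Everything else is a direct transcription of the two (mirror-image) inductive definitions of $\pivL$ and $\pivR$.
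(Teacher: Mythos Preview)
Your proof is correct and is precisely the straightforward row-by-row induction the paper has in mind; the paper itself omits the details, remarking only that the proof is straightforward. You have simply spelled out the evident bookkeeping that $\rev$ preserves rows, reverses columns, and therefore intertwines the ``rightmost column to the left with a marker below and no pivot'' rule for $\pivL$ with the ``leftmost column to the right with a marker below and no pivot'' rule for $\pivR$.
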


The utility of pivots is due to their connection with the Robinson-Schensted (RS) algorithm.  We will consider applying the RS algorithm to a placement $P$.  What we really mean is that we are implicitly applying it to the partial permutation corresponding to $P$.  We will also write $\ins(P)$ and $\rec(P)$ for the insertion and recording tableaux of $P$.  Let us first compare the insertion and recording tableaux for $P$, $\pivR(P)$, and $\pivL(P)$ using the placement on the right hand side in Figure 1.  

When we apply RS to $P$ we get
\begin{displaymath}
\begin{array}{ccp{.4in}cc}
\raisebox{.8cm}{$\ins\ P =$} &\young(148,25,7,9) && \raisebox{.8cm}{$\rec\ P =$} &\young(139,48,6,7)
\end{array}.
\end{displaymath} 

Now on the one hand RS applied to $\pivR\ P$ gives us

\begin{displaymath}
\begin{array}{ccp{.4in}cc}
\raisebox{.5cm}{$\ins(\pivR\ P) = $}&\young(25,7,9) && \raisebox{.5cm}{$\rec(\pivR\ P)= $}  &\young(48,6,7)
\end{array},
\end{displaymath} 

while RS applied to $\pivL\ P$ gives

\begin{displaymath}
\begin{array}{ccp{.4in}cc}
\raisebox{.5cm}{$\ins(\pivL\ P) = $}&\young(48,5) && \raisebox{.5cm}{$\rec(\pivL\ P)= $}  &\young(18,7)
\end{array}.
\end{displaymath} 

\noindent We see here that the RS algorithm applied to $\pivR\ P$ gives the same insertion and recording tableaux, minus the top row, as the RS algorithm applied to $P$.  And the insertion tableau of $\pivL\ P$ is equal to $\ins\ P$, minus its left column.  This is the content of the next two theorems.

\begin{notation}
If $Y$ is a standard Young tableau then we will denote by $Y^-$ the tableau consisting of all but the top row of $Y$ and $\leftexp{-}{Y}$ the tableau consisting of all but the left column of $Y$.\\
\end{notation}

\begin{theorem} \label{thm_pivots}
Let $P$ be a placement on a rectangular Ferrers board.  Then we have
$$\ins( \pivR\ P) = (\ins\ P)^-\textrm{ \hspace{.2in}and \hspace{.2in}} \rec( \pivR\ P)   = (\rec\ P)^-$$
\end{theorem}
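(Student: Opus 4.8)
The plan is to identify $\pivR\ P$, viewed as a partial permutation, with the record of which values are bumped out of the \emph{first row} during Schensted insertion, and then to invoke the standard recursive structure of the RS correspondence. For a placement $Q$ on a rectangular Ferrers board, let $\beta(Q)$ be the placement with a marker in square $(c,r)$ precisely when, during RS insertion of the partial permutation of $Q$ (processing columns from left to right), the insertion of the entry in column $c$ ejects the value $r$ from the first row. Since at most one value leaves the first row per insertion step and such a value never re-enters the first row, $\beta(Q)$ has at most one marker per row and per column, so it is a genuine partial permutation. The theorem reduces to two facts: (i) $\ins(\beta(Q))=(\ins\ Q)^-$ and $\rec(\beta(Q))=(\rec\ Q)^-$; and (ii) $\pivR\ Q=\beta(Q)$. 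Fact (i) is the familiar observation that rows $2,3,\dots$ of $\ins\ Q$ are produced by inserting, in bump order, exactly the values ejected from the first row, and that the corresponding recording entries are the (already increasing) column indices of the steps at which those ejections occur, so no relabeling is needed.

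Fact (ii) is the heart of the matter, and I would prove it by induction on the number of columns, passing from $Q$ on columns $1,\dots,n$ to $\widetilde Q = Q\cup\{(n+1,v)\}$; if column $n+1$ is left empty, neither $\pivR$ nor $\beta$ changes. On the insertion side, $\widetilde Q$ adds a single final insertion step: if $v$ exceeds every entry of the final first row $R_1$ of $\ins\ Q$, then nothing is ejected and $\beta(\widetilde Q)=\beta(Q)$; otherwise the least $b\in R_1$ with $b>v$ is ejected from the first row at that last step, so $\beta(\widetilde Q)=\beta(Q)\cup\{(n+1,b)\}$.

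On the pivot side one argues, processing rows from the bottom, that column $n+1$ is never the chosen pivot column in a row $\le v$ (its $Q$-element sits in row $v$, hence not strictly below such a row), while in a row $>v$ it is eligible only as a last resort, being the rightmost column; consequently $\pivR\ Q$ and $\pivR\ \widetilde Q$ are built identically until the first row $r>v$, if any, at which every column $c$ lying to the right of the $Q$-element of row $r$ and carrying a $Q$-element strictly below row $r$ has already become a pivot column, and there $\pivR\ \widetilde Q$ acquires the single extra marker $(n+1,r)$, after which the two again coincide. To finish, one matches that row $r$ with $b$. By the inductive hypothesis $\pivR\ Q=\beta(Q)$, such a column $c$ is already a pivot column below row $r$ iff inserting its entry ejects from the first row some value $<r$, i.e.\ iff at that step some first-row value lies strictly between that entry and $r$; combining this with the fact that a value lies in $R_1$ exactly when it is never ejected from the first row, one checks: if $v<\max R_1$, no value $r$ with $v<r<b$ can lie in $R_1$, so each such $r$ occurring in $Q$ is already a pivot row of $Q$ in a column $\le n$, whence no new marker appears before row $b$, while at row $b$ the persistence of $b$ in the first row forces every relevant column to eject a value $<b$ when its entry is inserted, so $b$ is the sought row; and if $v>\max R_1$, the same ``$r\notin R_1$, hence already a pivot row'' argument shows no new marker is ever created.

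The step I expect to be the real obstacle is Fact (ii), specifically the two bookkeeping points above: first, that adjoining a rightmost column perturbs $\pivR$ by at most one new marker rather than globally reshuffling pivots --- this rests on the new column being on the far right, on the ``leftmost eligible column'' tie-break, and on the new element's row being exactly $v$; and second, the dictionary (supplied by the inductive hypothesis) between the purely combinatorial condition ``this column already holds a pivot'' and the first-row insertion dynamics, which is what pins the new marker to row $b$.
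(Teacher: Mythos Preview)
Your proposal is correct and follows essentially the same strategy as the paper: both arguments reduce the theorem to the single claim that the right pivots coincide with the record of first-row bumps during RS insertion (your map $\beta$), and then invoke the recursive structure of the RS algorithm to get the tableaux identities. The paper makes exactly this reduction explicit (``it will suffice to show that inserting $\pi(p_j)$ \dots\ causes $v_j$ to be bumped from the first row and that the $v_j$ are the only numbers bumped''), and its Remark afterward notes that this identifies $\pivR$ with Viennot's northeast corners, which is precisely the content of your Fact~(ii).

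The only real difference is the induction variable for Fact~(ii): the paper inducts on the number $m$ of pivots, restricting to the prefix ending just before the column $p_m$ of the last pivot, then shows separately that the remaining insertions after $p_m$ bump nothing; you instead add one column at a time and handle the ``bump'' and ``no bump'' cases uniformly at each step. Your version requires slightly more bookkeeping (the row-by-row comparison of $\pivR Q$ and $\pivR\widetilde Q$ and the identification of the new pivot row with $b=\min\{x\in R_1:x>v\}$), but it is perhaps conceptually cleaner in that the two cases are treated symmetrically rather than split across the inductive step and a tail argument. Either way, the substance is the same.
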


\begin{proof}  
Let 
\begin{displaymath}
\pi=\left(
\begin{array}{cccc}
i_1 & i_2 & \cdots & i_n \\
s_1 & s_2 & \cdots & s_n
\end{array}
\right),
\end{displaymath}
with $i_1<i_2<\ldots$, be the partial permutation corresponding to $P$.  Let 

\begin{displaymath}
\left(
\begin{array}{cccc}
p_1 & p_2 & \cdots & p_m \\
v_1 & v_2 & \cdots & v_m
\end{array}
\right),
\end{displaymath}
with $p_1<p_2<\ldots$, be the partial permutation corresponding to $\pivR(P)$.  

If we consider the RS algorithm applied to $\pi$ it will suffice to show that inserting $\pi(p_j)$ into the insertion tableau causes $v_j$ to be bumped from the first row and that the $v_j$ are the only numbers bumped from the first row.  To show this we use induction on $m$, the number of pivots.  If $m=0$ then $\pi$ must have no 21-pattern and hence $s_1<s_2<\ldots$.  Hence $(\ins\ \pi)^-=\emptyset$ and $(\rec\ \pi)^-=\emptyset$.  Now assume $\pi$ is such that $m>0$.  Define $k$ so that $i_k=p_m$ and consider $\pi|_{k-1}$, the restriction of $\pi$ to its first $k-1$ entries.
Then we have 

\begin{displaymath}
\pivR(\pi|_{k-1})=\left(
\begin{array}{cccc}
p_1 & p_2 & \cdots & p_{m-1} \\
v_1 & v_2 & \cdots & v_{m-1}
\end{array}
\right).
\end{displaymath}

By induction the RS algorithm applied to $\pi|_{k-1}$ bumps $v_j$ on the $p_j$th move for $1\leq j\leq m-1$ and bumps nothing else.  So the top row of the insertion tableau is just 
$$S=\{s_j\ |\  1\leq j\leq k-1\} \setminus \{v_j\ | \ 1\leq j \leq m-1\}.$$  
Since applying the RS algorithm to $\pi|_{k-1}$ is the same as partially computing the RS algorithm on $\pi$ up through the index $i_{k-1}$ of $\pi$ we may assume this is where we are in the algorithm.   We now claim that inserting the next value of $\pi$, $s_k$, into the insertion tableau bumps $v_m$ from the first row of the insertion tableau.  Showing this is equivalent to showing that $v_m\in S$ and that $v_m$ is the smallest element in $S$ that is greater than $s_k$.  Clearly $v_m\in S$ as all the $v_j$ are distinct.  Now if $s\in \{ s_j\ |\ 1\leq j\leq k-1\}$ and $s_k < s<v_m$ then $ss_k$ is a 21-pattern.  But this implies that for some $i<m$, $s = v_i$.  Hence  $s\notin S$ and $v_m$ is indeed bumped by the insertion of $s_k$.  Therefore after the $k$th move the first row of the insertion tableau is 
$$S^\prime =\{s_j\ |\  1\leq j\leq k\} \setminus \{v_j\ | \ 1\leq j \leq m\}.$$  
Now let $k<j\leq n$. To complete the proof we need only show that the RS algorithm appends $s_j$ to the top row of the insertion tableau. This is equivalent to showing that $s_{k+1}<s_{k+2}<\ldots$ and that $s_{k+1}$ is greater than all the elements in $S^\prime$.    If the $s_j$ are not increasing from $j=k+1$ onward then they must contain a 21-pattern and hence there exists a pivot to the right of position $p_m$, contradicting our choice of $p_m$.  If $s_{k+1}$ is not greater than all the elements in $S'$ then for some $s'\in S'$, $s's_{k+1}$ a 21-pattern.  This means that either $s' = v_i$, for some $i$, or $i_{k+1}= p_j$, for some $j$.  But this contradicts either our definition of $S'$ or our definition of $p_m$.
\end{proof}

\begin{remark}
We thank Sergi Elizalde for suggesting to us that there might be a connection between our right pivots and Viennot's geometric construction.  It turns out that the right pivots coincide with Viennot's ``northeast corners".  This follows from the fact that  Viennot establishes the analogue of Theorem \ref{thm_pivots} for the northest corners.  
\end{remark}

To state our theorem relating left-pivots and the RS algorithm we first need the following.
 
\begin{notation} If $Y$ is any standard Young tableau then denote by $Y^{tr}$ the transposed tableau, i.e., the tableau obtained by reflecting $Y$ across the NW-SE diagonal.

Likewise, if $P$ is any placement on a rectangular Ferrers board $F$ then denote by $F^{tr}$ and $P^{tr}$ the resulting board and placement obtained by reflecting $F$ and $P$ across the NW-SE diagonal.
\end{notation}

\begin{remark}\label{remark_reverse}
Recall the well known fact that for any placement $P$ we have 
$$\ins( \rev\ P) = (\ins\ P)^{tr}.$$
For a proof of this result see [7], page 97.
\end{remark}

\begin{theorem}\label{thm_left_pivots}
Let $P$ be a placement on a rectangular Ferrers board $F$. Then
$$ \ins(\pivL\ P)  = \leftexp{-}{(\ins\ P)}.$$
\end{theorem}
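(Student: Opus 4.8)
The plan is to reduce Theorem \ref{thm_left_pivots} to Theorem \ref{thm_pivots} by reflecting everything across a vertical line, using Lemma \ref{lem_left_right_relation} together with the transpose identity of Remark \ref{remark_reverse}. The underlying point is that passing from $P$ to $\rev\ P$ converts left pivots into right pivots (Lemma \ref{lem_left_right_relation}) and converts insertion tableaux into their transposes (Remark \ref{remark_reverse}), while under transposition ``delete the top row'' becomes ``delete the left column''. So the theorem about left pivots and $\leftexp{-}{(\ins\ P)}$ should fall out of the theorem about right pivots and $(\ins\ P)^-$ after composing two involutions.

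First I would observe that, since $\rev$ and transposition are both involutions, Remark \ref{remark_reverse} can be rephrased as $\ins(Q) = \big(\ins(\rev\ Q)\big)^{tr}$ for any placement $Q$ on a rectangular board. Applying this with $Q = \pivL\ P$ gives $\ins(\pivL\ P) = \big(\ins(\rev(\pivL\ P))\big)^{tr}$. Next I would invoke Lemma \ref{lem_left_right_relation} to rewrite $\rev(\pivL\ P)$ as $\pivR(\rev\ P)$, and then apply Theorem \ref{thm_pivots} to the placement $\rev\ P$ to get $\ins(\pivR(\rev\ P)) = \big(\ins(\rev\ P)\big)^-$. A final application of Remark \ref{remark_reverse} replaces $\ins(\rev\ P)$ by $(\ins\ P)^{tr}$, so that altogether $\ins(\pivL\ P) = \Big(\big((\ins\ P)^{tr}\big)^-\Big)^{tr}$.

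It then remains only to verify the elementary identity $\big((Y^{tr})^-\big)^{tr} = \leftexp{-}{Y}$ for an arbitrary standard Young tableau $Y$, applied here with $Y = \ins\ P$: deleting the top row of $Y^{tr}$ and transposing back is exactly deleting the leftmost column of $Y$, because transposition interchanges first row with first column and is its own inverse. This is immediate from the definitions of $Y^-$, $\leftexp{-}{Y}$, and $Y^{tr}$.

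I do not expect a genuine obstacle: the substantive content is Theorem \ref{thm_pivots}, and everything else is bookkeeping with the involutions $\rev$ and transposition. The only points requiring a little care are that $\rev\ P$ is again a placement on a rectangular Ferrers board, so that Lemma \ref{lem_left_right_relation}, Theorem \ref{thm_pivots}, and Remark \ref{remark_reverse} all apply to it (clear, since reflecting a rectangle in a vertical line yields a rectangle), and that objects such as $(\ins\ P)^-$ and $\leftexp{-}{(\ins\ P)}$ are row- and column-increasing fillings using only a subset of the original entries rather than genuine standard tableaux on $\{1,\dots,m\}$; this is harmless, since transposition and the row/column deletions are purely shape-theoretic operations that do not depend on the entries being an initial segment of the integers.
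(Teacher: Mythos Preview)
Your proposal is correct and follows essentially the same approach as the paper: both proofs combine Lemma~\ref{lem_left_right_relation}, Theorem~\ref{thm_pivots}, and Remark~\ref{remark_reverse} in the same way, finishing with the tableau identity $\big((Y^{tr})^-\big)^{tr} = \leftexp{-}{Y}$. The only cosmetic difference is the order in which the steps are written.
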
  

\begin{proof}
By Lemma \ref{lem_left_right_relation} we have
$$\rev( \pivL\ P) = \pivR( \rev\ P).$$
Taking the insertion tableau of both sides and applying Theorem \ref{thm_pivots} to the right hand side we get
$$\ins\Big( \rev\ (\pivL\ P)\Big)  = [\ins\Big(\rev\ P\Big)]^-.$$ 
Then by Remark 2 we have 
$$(\ins\ (\pivL\ P))^{tr}=  [(\ins\ P)^{tr}]^-.$$
By then transposing both sides we have
$$\ins(\pivL\ P)=  ([(\ins\ P)^{tr}]^-)^{tr} =  \leftexp{-}{ (\ins\ P)}.$$  
\end{proof}

\begin{definitions}
Let $P$ be a placement on a rectangular Ferrers board.  For any $X\in P$, if $X$ is in the same column as some $V\in \pivL\ P$ then define $\rho(P,X) = \row(V)$ else define $\rho(P,X) = \infty$.  

Likewise, if $X$ is in the same row as some $V \in \pivL\ P$ then define $\kappa(P,X) =\col(V)$ else define $\kappa(P,X) = 0$. 

If the placement is understood we will just write $\rho(X)$ or $\kappa(X)$.
\end{definitions}

Next let us establish some symmetry intrinsic to left-pivots
\begin{lemma}\label{lem_pivot_symmetry}
If $P$ is a placement on a rectangular Ferrers board $F$ with $n$ columns, then we have
$$(\pivL\ P)^{tr} = \pivL( P^{tr})$$
\end{lemma}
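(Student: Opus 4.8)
The plan is to prove the equivalent identity $\pivL Q = \bigl(\pivL(Q^{tr})\bigr)^{tr}$ for every placement $Q$ on a rectangular board; the lemma then follows on taking $Q=P^{tr}$, since $tr$ is an involution on rectangular boards. Write $\Psi(Q):=\bigl(\pivL(Q^{tr})\bigr)^{tr}$. The first step is to describe $\Psi$ directly as a procedure on $Q$. Unwinding the definitions of $tr$ and of $\pivL$ --- reflection across the $NW$--$SE$ diagonal turns the rows of $Q^{tr}$, read from the bottom, into the columns of $Q$ read from the right; it turns "to the left of'' into "above'', "below'' into "to the right of'', and "rightmost'' into "lowest'' --- one checks that $\Psi(Q)$ is obtained by the following \emph{dual procedure}: process the columns of $Q$ from right to left, and when a column $c$ contains a marker $X$ of $Q$, look for a row strictly above $X$ that contains a marker of $Q$ strictly to the right of column $c$ and that does not yet contain a $\Psi$-pivot; if such a row exists, place a $\Psi$-pivot in column $c$ and in the lowest such row. (In particular a pivot of $\pivL Q$, and of $\Psi(Q)$, always joins the marker in its column to the marker in its row.)

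I would then induct on the number of markers of $Q$, the empty case being trivial. Given $Q$ with at least one marker, let $X_{\mathrm{top}}$ be the marker in the highest occupied row $M$, say in column $k$, and set $Q^{-}:=Q\setminus\{X_{\mathrm{top}}\}$. Since $\pivL$ processes rows from the bottom and $X_{\mathrm{top}}$ is handled last, nothing occurring below row $M$ involves $X_{\mathrm{top}}$, so $\pivL Q$ agrees with $\pivL Q^{-}$ outside row $M$ and $\pivL Q = \pivL Q^{-}\cup\{e_1\}$, where $e_1$ is the unique pivot (if any) placed while processing row $M$: by the defining rule, $e_1=(c^{*},M)$ with $c^{*}$ the rightmost column $c<k$ that contains a marker of $Q$ and contains no pivot of $\pivL Q^{-}$, and $e_1$ is empty if there is no such column. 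The analogous statement for $\Psi$ requires a short argument: in the dual procedure $X_{\mathrm{top}}$, being in the highest row, is never selected as the "lowest such row'' for any column, so it never receives a pivot in its own column; and a column $c<k$ can place a pivot joining it to row $M$ only when $X_{\mathrm{top}}$ is its \emph{only} still-available marker to the northeast, which occurs exactly when the marker in column $c$ acquires no pivot in the dual procedure run on $Q^{-}$. Checking that this is the only effect of deleting $X_{\mathrm{top}}$ gives $\Psi(Q)=\Psi(Q^{-})\cup\{e_2\}$, where $e_2=(c_0,M)$ with $c_0$ the rightmost column $c<k$ whose marker acquires no pivot in the construction of $\Psi(Q^{-})$ (again possibly empty).

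Now apply the inductive hypothesis: $\pivL Q^{-}=\Psi(Q^{-})=:\mathcal C$. In the dual procedure, a pivot lying in column $c$ is placed precisely when column $c$ is processed, so "the marker in column $c$ acquires a pivot in the construction of $\mathcal C$'' is the same as "column $c$ contains a pivot of $\mathcal C$''; and likewise "column $c$ contains a pivot of $\mathcal C$'' has its plain meaning in the $\pivL$-description. Hence $c^{*}$ and $c_0$ are both equal to the rightmost column $c<k$ that contains a marker of $Q$ and no pivot of $\mathcal C$ (or both are absent), so $e_1=e_2=(c^{*},M)$ and therefore $\pivL Q=\pivL Q^{-}\cup\{e_1\}=\Psi(Q^{-})\cup\{e_2\}=\Psi(Q)$, completing the induction.

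I expect the one genuinely delicate point to be the two decomposition claims, $\pivL Q=\pivL Q^{-}\cup\{e_1\}$ and $\Psi(Q)=\Psi(Q^{-})\cup\{e_2\}$: one must verify that removing $X_{\mathrm{top}}$ triggers no cascade of altered choices. For $\pivL$ this is immediate because $X_{\mathrm{top}}$ is processed last. For the dual procedure the key observation is that the highest marker is "grabbed'' only as a last resort --- whenever a column grabs it, that column would otherwise have produced no pivot at all --- and from this one deduces, column by column, that the available/unavailable status of every marker of $Q^{-}$ evolves identically whether or not $X_{\mathrm{top}}$ is present. Once that is in hand, the identification of $c^{*}$ with $c_0$ is just a comparison of the two stopping rules, and the routine unwinding in the first paragraph (that $\Psi$ is given by the dual procedure) completes the proof.
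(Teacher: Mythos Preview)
Your proof is correct, and it shares its first move with the paper's: both reduce the lemma to showing that the row-by-row construction of $\pivL$ coincides with the ``dual'' column-by-column construction (your $\Psi$, the paper's ``column construction'').  Where you diverge is in how you prove this coincidence.  The paper inducts on columns from right to left: assuming the two constructions agree in all columns to the right of $c$, it argues directly that they must then agree in column $c$, using a short contradiction argument that chases what would have to happen if a column pivot at $(c,r)$ were not a left pivot.  You instead induct on the number of markers, peeling off the topmost marker $X_{\mathrm{top}}$ and arguing that this deletion changes each construction by at most a single pivot in the top row, with matching locations.

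Both arguments are short, but the paper's column induction is a bit more economical: it avoids entirely the ``no cascade'' verification that you correctly flag as the delicate point of your approach.  On the other hand, your marker-induction makes vivid the structural reason the lemma holds---the highest marker is always grabbed last in either procedure---and the ``last resort'' observation you isolate (a column grabs row $M$ only when it would otherwise place no pivot at all) is a clean way to see why removing $X_{\mathrm{top}}$ cannot propagate.  Your sketch of that verification is accurate; to make it airtight you would just spell out the column-by-column comparison you allude to: for columns $c>k$ row $M$ is never eligible, for $c=k$ neither procedure places a pivot, and for $c<k$ the eligible rows in $Q$ and $Q^{-}$ differ only in row $M$, which is chosen exactly once (being the highest row) and thereafter is unavailable in both.
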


\begin{proof}
Observe that this is equivalent to showing that the set $\pivL(P)$ may be constructed by the following column construction.  

\vspace{.15in}
\noindent\emph{Column Construction}

First, there is no pivot in the rightmost column. Now consider column $c<n$. If there is no element of $P$ in column $c$ then we do not place a pivot in column $c$.  If $X\in P$ is in column $c$ then if there is a row above $X$ that contains no pivot to the right of column $c$ but does contain an element of $P$ to the right of column $c$ then there is a pivot in column $c$ and in the lowest such such row. 
\vspace{.15in}

For reference we will call these pivots \emph{column pivots}.  To show that the column and row constructions agree assume inductively that
\begin{equation}\label{col_row_pivot}
\textrm{column pivots$(P) =\pivL(P)$, for all columns to the right of $c$.}
\end{equation}

Assume for a contradiction that the column pivots and left pivots disagree in column $c$.  Note that if there is no column pivot in column $c$ then by (\ref{col_row_pivot}) we cannot have a left pivot either. Now assume we have a column pivot in $(c,r)$, for some $r$.  So there must exist $X,Y\in P$ with $\col(X) =c$, $\row(Y) = r$ and $XY$ a 12-pattern.  If there is no left pivot in $(c,r)$ then it follows from (\ref{col_row_pivot}) that $\kappa(Y)<c$.  But this can only occur if there exists some $Z\in P$ with $\row(Z)<r$ and $\kappa(Z)= c$.  But then (\ref{col_row_pivot}) and the column construction imply that the column pivot in column $c$ would be in some row $<r$ and not in row $r$ as assumed.
\end{proof}
The following definition and lemmas will be useful in Sections 5 and 6. 

\vspace{.15in}
\noindent\textbf{Convention.}  Since we will only be working with left pivots for the remainder of this paper, the term pivot will mean left pivot from now on.
\vspace{.15in}

\begin{definition}
We say an increasing subsequence $I$ of $P$ is a \emph{pivot-path}, if for all $i<|I|$, $\rho(I_i) = \row(I_{i+1})$, i.e., each consecutive pair creates a pivot.
\end{definition}

\begin{figure}[h!]
\begin{center}
\ifx\JPicScale\undefined\def\JPicScale{.75}\fi
\unitlength \JPicScale mm
\begin{picture}(70,65)(0,0)
\linethickness{0.1mm}

\put(0,0){\line(1,0){85}}
\put(0,5){\line(1,0){85}}
\put(0,10){\line(1,0){85}}
\put(0,15){\line(1,0){85}}
\put(0,20){\line(1,0){85}}
\put(0,25){\line(1,0){85}}
\put(0,30){\line(1,0){85}}
\put(0,35){\line(1,0){85}}
\put(0,40){\line(1,0){85}}
\put(0,45){\line(1,0){85}}
\put(0,50){\line(1,0){85}}
\put(0,55){\line(1,0){85}}
\put(0,60){\line(1,0){85}}
\put(0,65){\line(1,0){85}}
\put(0,70){\line(1,0){85}}

\put(0,0){\line(0,1){70}}
\put(5,0){\line(0,1){70}}
\put(10,0){\line(0,1){70}}
\put(15,0){\line(0,1){70}}
\put(20,0){\line(0,1){70}}
\put(25,0){\line(0,1){70}}
\put(30,0){\line(0,1){70}}
\put(35,0){\line(0,1){70}}
\put(40,0){\line(0,1){70}}
\put(45,0){\line(0,1){70}}
\put(50,0){\line(0,1){70}}
\put(55,0){\line(0,1){70}}
\put(60,0){\line(0,1){70}}
\put(65,0){\line(0,1){70}}
\put(70,0){\line(0,1){70}}
\put(75,0){\line(0,1){70}}
\put(80,0){\line(0,1){70}}
\put(85,0){\line(0,1){70}}

\put(2.5,37.5){\makebox(0,0)[cc]{ \small{$\times$} }}
\put(17.5,27.5){\makebox(0,0)[cc]{ \small{$\times$} }}
\put(27.5,52.5){\makebox(0,0)[cc]{ \small{$\times$} }}
\put(32.5,42.5){\makebox(0,0)[cc]{ \small{$\times$} }}
\put(37.5,12.5){\makebox(0,0)[cc]{ \small{$\times$} }}
\put(42.5,57.5){\makebox(0,0)[cc]{ \small{$\times$} }}
\put(47.5,47.5){\makebox(0,0)[cc]{ \small{$\times$} }}
\put(52.5,67.5){\makebox(0,0)[cc]{ \small{$\times$} }}
\put(57.5,22.5){\makebox(0,0)[cc]{ \small{$\times$} }}
\put(67.5,32.5){\makebox(0,0)[cc]{ \small{$\times$} }}
\put(72.5,7.5){\makebox(0,0)[cc]{ \small{$\times$} }}
\put(82.5,17.5){\makebox(0,0)[cc]{ \small{$\times$} }}

\put(72.5,17.5){\makebox(0,0)[cc]{ \tiny{$\bullet$} }}
\put(57.5,32.5){\makebox(0,0)[cc]{ \tiny{$\bullet$} }}
\put(47.5,67.5){\makebox(0,0)[cc]{ \tiny{$\bullet$} }}
\put(37.5,22.5){\makebox(0,0)[cc]{ \tiny{$\bullet$} }}
\put(32.5,47.5){\makebox(0,0)[cc]{ \tiny{$\bullet$} }}
\put(27.5,57.5){\makebox(0,0)[cc]{ \tiny{$\bullet$} }}
\put(17.5,42.5){\makebox(0,0)[cc]{ \tiny{$\bullet$} }}
\put(2.5,52.5){\makebox(0,0)[cc]{ \tiny{$\bullet$} }}

\color{red}
\linethickness{0.2mm}
\put(74,17.5){\line(1,0){7}}
\put(72.5,9){\line(0,1){7}}

\put(37.5,14){\line(0,1){7}}
\put(39,22.5){\line(1,0){17}}
\put(57.5,24){\line(0,1){7}}
\put(59,32.5){\line(1,0){7}}

\put(17.5,29){\line(0,1){12}}
\put(19,42.5){\line(1,0){12}}
\put(32.5,44){\line(0,1){2}}
\put(34,47.5){\line(1,0){12}}
\put(47.5,49){\line(0,1){17}}
\put(49,67.5){\line(1,0){2}}

\put(29,57.5){\line(1,0){12}}
\put(27.5,54){\line(0,1){2}}
\put(4,52.5){\line(1,0){22}}
\put(2.5,39){\line(0,1){12}}
\end{picture}
\caption{An example of pivot-paths where the placement is denoted by $\times$ and the pivots are denoted by $\bullet$.  The consecutive elements of each pivot path are connected by red lines.}
\end{center}
\end{figure}
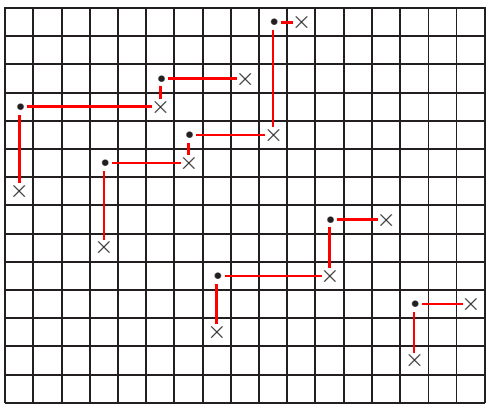

\newpage
\begin{lemma}\label{lemma_pivot1}
If $X, Y \in P$ are such that $XY$  is a 12-pattern then we cannot have $\row(Y)<\rho(X)$ and $\kappa(Y)<\col(X)$.
\end{lemma}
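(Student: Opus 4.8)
The plan is to argue by contradiction, working directly from the row-by-row construction of $\pivL\ P$. Suppose $X,Y\in P$ with $XY$ a $12$-pattern, $\row(Y)<\rho(X)$, and $\kappa(Y)<\col(X)$; being a $12$-pattern gives $\col(X)<\col(Y)$ and $\row(X)<\row(Y)$. I would then zoom in on the single moment in the construction at which row $\row(Y)$ is processed, and show that the construction is forced to place a pivot of row $\row(Y)$ in column $\col(X)$ or further to the right, contradicting $\kappa(Y)<\col(X)$.

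The argument rests on two observations about that moment. First, column $\col(X)$ is a legitimate candidate: it lies strictly to the left of $Y$ (since $\col(X)<\col(Y)$), and it contains an element of $P$ strictly below row $\row(Y)$, namely $X$ itself (since $\row(X)<\row(Y)$). Second, column $\col(X)$ carries no pivot yet: each column receives at most one pivot over the whole construction, and the pivot in column $\col(X)$, if it exists at all, sits in row $\rho(X)$, which by hypothesis satisfies $\rho(X)>\row(Y)$ and hence has not yet been processed, because pivots are placed from bottom to top (and if $\rho(X)=\infty$ there is no such pivot). Combining the two, at the moment row $\row(Y)$ is processed there is a nonempty set of columns to the left of $Y$ that contain an element of $P$ below row $\row(Y)$ and carry no pivot, so the rule for left pivots places a pivot of row $\row(Y)$ in the rightmost such column, which is therefore $\geq\col(X)$. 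Thus $\kappa(Y)\geq\col(X)$, the desired contradiction.

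I do not expect a serious obstacle here; the only thing requiring a little care is the bookkeeping about the order in which pivots get placed — i.e., that "the pivot in column $\col(X)$ lies in row $\rho(X)$" together with $\rho(X)>\row(Y)$ and the bottom-to-top processing order genuinely forces column $\col(X)$ to be pivot-free when row $\row(Y)$ is reached. One should also check that the degenerate cases $\rho(X)=\infty$ and $\kappa(Y)=0$ cause no trouble, but the argument handles them uniformly once the inequalities are read with the stated conventions $\rho=\infty$ and $\kappa=0$: in the first case column $\col(X)$ is trivially pivot-free throughout, and in the second case the conclusion "a pivot is placed in row $\row(Y)$" already contradicts $\kappa(Y)=0$.
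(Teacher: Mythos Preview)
Your proof is correct and follows essentially the same approach as the paper: the paper's one-line argument (``according to the construction of pivots we would have $\col(X)\leq \kappa(Y)$'') is exactly what you have unpacked in detail, namely that when row $\row(Y)$ is processed, column $\col(X)$ is a pivot-free candidate column, forcing $\kappa(Y)\geq\col(X)$. Your handling of the degenerate cases $\rho(X)=\infty$ and $\kappa(Y)=0$ is also correct.
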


\begin{proof}
If both of these held, then according to the construction of pivots we would have $\col(X)\leq \kappa(Y)$.  
\end{proof}

\begin{lemma}\label{lemma_pivot2}
Let $X,Y,Z\in P$ be such that $XYZ$ is a 132-pattern.  If there is a pivot-path $I$ with first element $X$ and last element $Y$ then there exists a pivot-path $J$ with last element $Z$ such that $XJ_1$ is a 21-pattern.
\end{lemma}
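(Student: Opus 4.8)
The plan is to build $J$ by walking ``down and to the left'' starting from $Z$ --- at each step jumping from an element of $P$ to the element of $P$ sitting below the pivot in its row --- and to use the given pivot-path $I$ together with Lemma~\ref{lemma_pivot1} to keep this walk from escaping past the column of $X$ before it has descended below the row of $X$. Write $I=I_1I_2\cdots I_p$ with $I_1=X$ and $I_p=Y$. Since $X,Y,Z$ are distinct we have $p\ge 2$, and since $\col(Z)>\col(Y)=\col(I_p)\ge\col(I_\ell)$ for every $\ell$, the square $Z$ is none of the $I_\ell$; hence $\row(Z)$ differs from every $\row(I_\ell)$, and there is a unique index $j$ with $\row(I_j)<\row(Z)<\row(I_{j+1})$.

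First I would check that row $\row(Z)$ contains a pivot: when the pivot construction processes that row, the column $\col(I_j)$ is an eligible choice, since it lies to the left of $\col(Z)$, it contains $I_j$ below row $\row(Z)$, and its unique pivot --- which sits at row $\rho(I_j)=\row(I_{j+1})>\row(Z)$ --- has not yet been placed. So a pivot is placed in row $\row(Z)$, necessarily in a column $>\col(I_j)$ (it cannot be $\col(I_j)$ itself, whose pivot lies higher up). Now put $Z^{(0)}=Z$, and while $\row(Z^{(i)})>\row(X)$ let $Z^{(i+1)}$ be the element of $P$ in the column of the pivot in row $\row(Z^{(i)})$, once that pivot is known to exist; stop at the first $q$ with $\row(Z^{(q)})\le\row(X)$, so that $q\ge 1$. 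By construction $\col(Z^{(i+1)})<\col(Z^{(i)})$, $\row(Z^{(i+1)})<\row(Z^{(i)})$, and $\rho(Z^{(i+1)})=\row(Z^{(i)})$, so every consecutive pair $(Z^{(i+1)},Z^{(i)})$ creates a pivot.

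The heart of the argument is the following claim, proved by induction on $i$: for $0\le i<q$ there is an index $m_i\le j$ with $I_{m_i}Z^{(i)}$ a 12-pattern and $\row(Z^{(i)})<\rho(I_{m_i})=\row(I_{m_i+1})$. The base case takes $m_0=j$. For the inductive step, Lemma~\ref{lemma_pivot1} applied to the 12-pattern $I_{m_i}Z^{(i)}$, using $\row(Z^{(i)})<\rho(I_{m_i})$, forces $\kappa(Z^{(i)})\ge\col(I_{m_i})>0$; hence row $\row(Z^{(i)})$ does contain a pivot (so $Z^{(i+1)}$ is defined) and $\col(Z^{(i+1)})\ge\col(I_{m_i})\ge\col(X)$. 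Since $\row(I_1)<\row(Z^{(i+1)})<\row(Z^{(i)})<\row(I_{m_i+1})$, there is $m_{i+1}\le m_i$ with $\row(I_{m_{i+1}})<\row(Z^{(i+1)})<\row(I_{m_{i+1}+1})$, and a short check --- that $Z^{(i+1)}$ cannot equal $I_{m_{i+1}}$ --- gives $\col(I_{m_{i+1}})<\col(Z^{(i+1)})$, which finishes the induction. In particular the walk never stalls while above row $\row(X)$, so $q$ is well-defined and $\col(Z^{(q)})\ge\col(X)$.

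It remains to rule out $Z^{(q)}=X$, which is the only way equality can hold in $\col(Z^{(q)})\ge\col(X)$ or in $\row(Z^{(q)})\le\row(X)$. If $Z^{(q)}=X=I_1$, then $\rho(I_1)=\row(Z^{(q-1)})$ forces $Z^{(q-1)}=I_2$, and iterating, $Z^{(q-t)}=I_{t+1}$ for as long as both indices make sense; following this down to $t=q$ would give $Z=I_{q+1}$, which is impossible, while if $q$ is large enough the iteration first produces $Z^{(i)}=I_p$ with $i<q$, contradicting the claim (which requires $\row(Z^{(i)})<\row(I_{m_i+1})$ with $m_i+1\le p$). Hence $XZ^{(q)}$ is a 21-pattern, and $J:=Z^{(q)}Z^{(q-1)}\cdots Z^{(0)}$ is a pivot-path with last element $Z$ and with $XJ_1$ a 21-pattern. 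The step I expect to be the main obstacle is the inductive claim of the third paragraph: this is exactly where one must show, via Lemma~\ref{lemma_pivot1}, that the pivot-path $I$ ``shadows'' the descending walk and confines it to the right of column $\col(X)$ until it has passed below row $\row(X)$.
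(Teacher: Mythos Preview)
Your proof is correct and follows the same underlying idea as the paper's: walk back along the (unique) pivot-path ending at $Z$ and use Lemma~\ref{lemma_pivot1} together with the pivot-path $I$ to show this walk cannot escape to the left of $\col(X)$ before dropping below $\row(X)$. The paper packages this more concisely by taking $K$ to be the \emph{longest} pivot-path ending at $Z$, observing in one stroke that $K\cap I=\emptyset$ (else $K\cup I$ would be a pivot-path, impossible since $XYZ$ is $132$) and that $I$ and $K$ cannot cross (Lemma~\ref{lemma_pivot1}); from there the first element of $K$ lying to the right of $X$ must already be below $X$, and the tail of $K$ from that point on is the desired $J$. Your step-by-step ``shadow index'' $m_i$ unrolls exactly this non-crossing argument; one small simplification you missed is that $\kappa(Z^{(i)})>\col(I_{m_i})$ is strict (equality would put two pivots in column $\col(I_{m_i})$), which immediately gives $\col(Z^{(q)})>\col(X)$ and makes your final paragraph ruling out $Z^{(q)}=X$ unnecessary.
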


\begin{proof}
Let $K$ be the longest pivot-path with last element $Z$. Note that $K\cap I=\emptyset$, for if not then $K\cup I$ is a pivot-path but this is prohibited as $XYZ$ is a 132-pattern.  Observe that $I$ and $K$ cannot ``cross", i.e., for all 12-patterns $I_iK_j$, we cannot have $\row(K_j)<\rho(I_i)$ and $\kappa(K_j)<\col(I_i)$ by Lemma \ref{lemma_pivot1}.  Therefore, if $XK_1$ is a 12-pattern then for some $i$, $I_iK_1$ is a 12-pattern with $\row(K_1)<\rho(I_i)$.  But by the maximality of the length of $K$ we must have $\kappa(K_1)=0<\col(I_i)$.  But this cannot happen by Lemma $\ref{lemma_pivot1}$.  Therefore, if $K_1$ is to the right of $X$ we can take $J=K$.  If $K_1$ is to the left of $X$, define $K_l$ to be the first element of $K$ that is to the right of $X$.  Observe that $K_l$ must be below $X$ since $J$ and $I$ do not cross.  Take $J = K_l K_{l+1}\ldots Z$.
\end{proof}

The statement of the next lemma is similar to Lemma \ref{lemma_pivot2}.  Its proof is also similar to that of Lemma \ref{lemma_pivot2}, but simpler.

\begin{lemma}\label{lemma_pivot3}
Let $X,Y\in P$ such that $XY$ is a 12-pattern.  If $\row(Y)<\rho(X)$ then there exists a pivot-path $J$, with last element $Y$, such that $XJ_1$ is a 21-pattern.
\end{lemma}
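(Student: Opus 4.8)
The plan is to mimic the proof of Lemma~\ref{lemma_pivot2}, with the pivot-path $I$ from $X$ to $Y$ there replaced by the single square $X$ together with the hypothesis $\row(Y)<\rho(X)$, which is exactly what lets us invoke Lemma~\ref{lemma_pivot1}. Let $K$ be the \emph{longest} pivot-path with last element $Y$, written $K=K_1K_2\ldots$, and let $|K|$ denote its length, so $K_{|K|}=Y$. I would begin with two easy observations coming straight from the construction of pivots. First, by maximality of $K$ there is no pivot in row $\row(K_1)$ lying to the left of $K_1$ — such a pivot would be triggered by a square $X'\in P$ with $\rho(X')=\row(K_1)$ and $X'K_1$ a 12-pattern, and $X'$ could then be prepended to $K$ — so $\kappa(K_1)=0$. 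Second, $X\notin K$: if $X=K_j$ with $j<|K|$ then $\rho(X)=\row(K_{j+1})\le\row(Y)$, contradicting the hypothesis, while $X=K_{|K|}=Y$ is impossible.

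Next I would rule out that $XK_1$ is a 12-pattern. Since $\row(K_1)\le\row(Y)<\rho(X)$ and $\kappa(K_1)=0<\col(X)$, such a 12-pattern would contradict Lemma~\ref{lemma_pivot1} (with $X$ in the first role and $K_1$ in the second). As $K_1\ne X$ and $K_1$ can share neither a row nor a column of $P$ with $X$, it follows that $K_1$ lies either strictly below and to the right of $X$, or strictly to the left of $X$.

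In the former case $XK_1$ is a 21-pattern and $J=K$ does the job. In the latter case, let $l$ be the least index with $\col(K_l)>\col(X)$; it exists since $\col(K_{|K|})=\col(Y)>\col(X)$, and $l\ge 2$ with $\col(K_{l-1})<\col(X)$. The consecutive pair $K_{l-1}K_l$ creates a pivot, which must be the unique pivot lying in row $\row(K_l)$, so $\kappa(K_l)=\col(K_{l-1})<\col(X)$; also $\row(K_l)\le\row(Y)<\rho(X)$. If $K_l$ were above $X$ (it cannot lie in $X$'s row, since $K_l\ne X$), then $XK_l$ would be a 12-pattern contradicting Lemma~\ref{lemma_pivot1}; hence $K_l$ lies below and to the right of $X$. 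Then $J=K_lK_{l+1}\ldots Y$ is a suffix of $K$, hence itself a pivot-path, with last element $Y$, and $XJ_1=XK_l$ is a 21-pattern, as required.

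I expect the only mildly delicate point to be the ``non-crossing'' step in the last case — locating $K_l$ and checking that it lands below $X$ — which is where Lemma~\ref{lemma_pivot1} is used a second time; the key is that $\row(Y)<\rho(X)$ propagates to $\row(K_l)<\rho(X)$ along $K$, while maximality of $K$ forces $\kappa(K_1)=0$ and the pivot-path structure forces $\kappa(K_l)=\col(K_{l-1})<\col(X)$. The remaining facts — at most one pivot per row and per column, $X\notin K$, and the existence of $l$ — are immediate from the construction of pivots, which is why this argument is shorter than that of Lemma~\ref{lemma_pivot2}.
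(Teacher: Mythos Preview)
Your proof is correct and follows exactly the approach the paper indicates: it mimics the proof of Lemma~\ref{lemma_pivot2}, with the pivot-path $I$ there collapsed to the single square $X$ and the hypothesis $\row(Y)<\rho(X)$ supplying the input to Lemma~\ref{lemma_pivot1}. The paper itself omits the details, saying only that the argument is ``similar to that of Lemma~\ref{lemma_pivot2}, but simpler,'' and your write-up fills this in faithfully.
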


\begin{lemma}\label{lemma_pivot4}
Let $S$ be a decreasing sequence of length $k$ in $P$.  Assume there is some $X\in P$ such that $XS_1$ is a 12-pattern.  If $\rho(X)>\row(S_1)$ there exists a decreasing sequence $D$ of length $k+1$ with $D_1=X$, and if  $X$ and $S_1$ are the first and last elements of a pivot-path then there exists a decreasing sequence $D$ of length $k$ with $D_1=X$.
\end{lemma}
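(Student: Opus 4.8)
\emph{Proof proposal.} Since a placement on a rectangular board is a rook placement, the value of a marker equals its row, so ``$S_1\cdots S_k$ decreasing'' means $\col(S_1)<\cdots<\col(S_k)$ and $\row(S_1)>\cdots>\row(S_k)$, while ``$AB$ a $12$- (resp.\ $21$-)pattern'' means $\col(A)<\col(B)$ together with $\row(A)<\row(B)$ (resp.\ $\row(A)>\row(B)$). I will use repeatedly the trivial \emph{prepend move}: if $AB$ is a $21$-pattern and $E$ is a decreasing sequence with $E_1=B$, then $A,E_1,E_2,\dots$ is again a decreasing sequence. The plan is to prove the second assertion first (by a nested induction) and then obtain the first assertion from it in two lines.

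First I would prove the second assertion, by strong induction on $k$. For $k=1$ take $D=X$. Assume it for $k-1$ and let $k\ge 2$; I run an inner induction on the length $m\ge 2$ of the given pivot-path $X=I_1\to I_2\to\cdots\to I_m=S_1$. \emph{Base case $m=2$}, so $\rho(X)=\row(S_1)$. If $\row(X)>\row(S_2)$ then $(X,S_2,\dots,S_k)$ is already decreasing of length $k$ and starts at $X$. Otherwise $\col(X)<\col(S_1)<\col(S_2)$ and $\row(X)<\row(S_2)$, so $XS_2$ is a $12$-pattern, and $\row(S_2)<\row(S_1)=\rho(X)$; by Lemma~\ref{lemma_pivot3} there is a pivot-path $J$ with last element $S_2$ such that $XJ_1$ is a $21$-pattern. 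Since $XS_2$ is a $12$-pattern while $XJ_1$ is a $21$-pattern, $J_1\neq S_2$, so $J_1$ and $S_2$ are the ends of a pivot-path and $J_1S_2$ is a $12$-pattern; as $(S_2,\dots,S_k)$ is decreasing of length $k-1$ with first element $S_2$, the outer induction hypothesis gives a decreasing sequence $D'$ of length $k-1$ with $D'_1=J_1$, and the prepend move applied to the $21$-pattern $XJ_1$ yields the required $D=(X,D'_1,\dots,D'_{k-1})$. \emph{Inductive step $m>2$}: the truncation $(I_2,\dots,I_m)$ is a pivot-path of length $m-1$ ending at $S_1$, so by the inner induction hypothesis there is a decreasing sequence $D''$ of length $k$ with $D''_1=I_2$; now $I_1\to I_2$ is a pivot-path of length $2$ and $D''$ is decreasing of length $k$ starting at $I_2$, so applying the $m=2$ case to $I_1,I_2,D''$ produces a decreasing sequence of length $k$ starting at $I_1=X$.

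The first assertion then follows at once. Since $XS_1$ is a $12$-pattern and $\row(S_1)<\rho(X)$, Lemma~\ref{lemma_pivot3} gives a pivot-path $J$ with last element $S_1$ such that $XJ_1$ is a $21$-pattern; because $XS_1$ is a $12$-pattern while $XJ_1$ is a $21$-pattern, $J_1\neq S_1$, so $J_1$ and $S_1$ are the ends of a (nontrivial) pivot-path and $J_1S_1$ is a $12$-pattern. Applying the (already proved) second assertion to the decreasing sequence $S$ and the element $J_1$ gives a decreasing sequence $D'$ of length $k$ with $D'_1=J_1$, and the prepend move for the $21$-pattern $XJ_1$ turns this into a decreasing sequence of length $k+1$ with first element $X$.

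I expect the main obstacle to be the subcase of the $m=2$ base case in which $\row(X)\le\row(S_2)$: the naive candidate $(X,S_2,\dots,S_k)$ fails there, and one is forced to detour through Lemma~\ref{lemma_pivot3} and the $(k-1)$-instance of the very statement being proved. Recognizing that this detour is available (and that it must be combined with a ``peeling'' reduction along the pivot-path for $m>2$, and that assertion (a) is best deduced from assertion (b) rather than attacked directly) is really the whole argument; the only other thing to watch is that Lemma~\ref{lemma_pivot3} is invoked only when $\row(Y)<\rho(X)$ genuinely holds, which in the $m=2$ subcase is exactly the point where one uses that the pivot-path has length $2$, i.e.\ $\rho(X)=\row(S_1)$.
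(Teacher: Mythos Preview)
Your argument is correct, and the reduction of the first assertion to the second via Lemma~\ref{lemma_pivot3} is exactly what the paper does. The difference lies in how you handle the second assertion. The paper runs a single induction on $k$: in the case $\row(X)<\row(S_2)$ it observes that $X,S_1,S_2$ form a $132$-pattern with a pivot-path from $X$ to $S_1$, so Lemma~\ref{lemma_pivot2} (not Lemma~\ref{lemma_pivot3}) immediately supplies a pivot-path $J$ ending at $S_2$ with $XJ_1$ a $21$-pattern, and the induction hypothesis on $(S_2,\dots,S_k)$ finishes. Because Lemma~\ref{lemma_pivot2} applies to pivot-paths of arbitrary length, no inner induction on $m$ is needed. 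Your approach trades Lemma~\ref{lemma_pivot2} for the simpler Lemma~\ref{lemma_pivot3}, at the cost of the extra ``peeling'' induction along the pivot-path; in effect your inner induction on $m$ re-derives the content of Lemma~\ref{lemma_pivot2} on the fly. Both routes are valid; the paper's is shorter, while yours shows that Lemma~\ref{lemma_pivot3} alone already suffices here.
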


\begin{proof}
First consider the case when $X$ and $S_1$ are connected by a pivot-path.  We will proceed by induction on $k$, the length of $S$.  If $k=1$ then we may take $D=X$.  Now consider $k>1$.  If $\row(X)>\row(S_2)$ then we may take $D=XS_2\ldots S_k$.  If on the other hand $\row(X)<\row(S_2)$ then Lemma \ref{lemma_pivot2} implies that there exists a pivot-path $J$ with last element $S_2$ and $XJ_1$ a 21-pattern.  Now by the induction hypothesis applied to $J_1$ and the shorter sequence $S_2S_3\ldots S_k$ we have a decreasing sequence $E$ of length $k-1$ with $E_1 = J_1$. But then the decreasing sequence $D=XE_1E_2\ldots E_{k-1}$ is what we want.

For the case when $\rho(X)>\row(S_1)$ first apply Lemma \ref{lemma_pivot3} to the 12-pattern $XS_1$ to obtain a pivot-path $J$ such than  $XJ_1$ forms a 21-pattern and $J$'s last element is $S_1$.  Now apply the first case to $J_1$ and the sequence $S$ to obtain a decreasing sequence $E$ of length $k$ with $E_1 = J_1$.  Then we may take $D = XE_1E_2\ldots E_k$ which is of length $k+1$. 
\end{proof}

Note that the previous lemma and Lemma \ref{lem_pivot_symmetry} directly imply the following result.

\begin{lemma}\label{lemma_pivot5}
Let $S$ be a decreasing sequence of length $k$ in $P$.  Assume there is some $X\in P$ such that $S_1X$ is a 12-pattern.  If $\kappa(X)<\col(S_1)$ then there exists a decreasing sequence $D$ of length $k+1$ with $D_1=X$.
\end{lemma}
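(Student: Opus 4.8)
The plan is to obtain Lemma \ref{lemma_pivot5} from the first assertion of Lemma \ref{lemma_pivot4} by applying the reflection $(\cdot)^{tr}$ across the NW--SE diagonal and invoking Lemma \ref{lem_pivot_symmetry}; no new combinatorial content is needed, only a careful translation of the hypotheses and the conclusion under $(\cdot)^{tr}$. This is exactly the deduction promised by the remark preceding the statement.

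First I would record how $(\cdot)^{tr}$ acts. It is an involution on placements on rectangular boards, and reflecting across the NW--SE diagonal interchanges ``lies to the left of'' with ``lies above'' and ``lies below'' with ``lies to the right of'', sending each column of $F$ to a row of $F^{tr}$ with its order reversed and each row of $F$ to a column of $F^{tr}$ with its order reversed. Consequently: if $A,B\in P$ and $AB$ is a $12$-pattern in $P$, then $B^{tr}A^{tr}$ is a $12$-pattern in $P^{tr}$; and if $E=E_1E_2\ldots E_\ell$ is a decreasing sequence of $P$ written left to right, then $E^{tr}=E_1^{tr}E_2^{tr}\ldots E_\ell^{tr}$ is again a decreasing sequence of $P^{tr}$, of the same length, written left to right, with first element $E_1^{tr}$. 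Next, by Lemma \ref{lem_pivot_symmetry} the pivots of $P^{tr}$ are precisely the images $V^{tr}$ of the pivots $V$ of $P$; since $(\cdot)^{tr}$ turns a shared row into a shared column and converts the column coordinate of a pivot into its row coordinate in the reversed order, an element $X\in P$ satisfies $\kappa(P,X)<\col(S_1)$ if and only if $\rho(P^{tr},X^{tr})>\row(S_1^{tr})$, with the degenerate possibility $\kappa(P,X)=0$ corresponding exactly to $\rho(P^{tr},X^{tr})=\infty$.

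With this dictionary in hand the argument is immediate. Given $S$ and $X$ as in the statement, put $S'=S^{tr}$ and $X'=X^{tr}$. Then $S'$ is a decreasing sequence of length $k$ in $P^{tr}$, the pair $X'S_1'$ is a $12$-pattern in $P^{tr}$, and $\rho(P^{tr},X')>\row(S_1')$. Applying the first assertion of Lemma \ref{lemma_pivot4} to the placement $P^{tr}$, with $X'$ in the role of $X$ and $S'$ in the role of $S$, yields a decreasing sequence $D'$ of length $k+1$ in $P^{tr}$ with $D_1'=X'$. Transposing back, $D=(D')^{tr}$ is a decreasing sequence of length $k+1$ in $P$ with $D_1=(X')^{tr}=X$, which is what we want.

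The only point requiring care — and it is bookkeeping rather than a genuine obstacle — is verifying the translation dictionary of the second paragraph: that ``left/right'' and ``above/below'', the functions $\col$ and $\row$, and the statistics $\kappa$ and $\rho$ (together with their boundary values $0$ and $\infty$) all transform under $(\cdot)^{tr}$ exactly as claimed, and in particular that the $12$-pattern ``$S_1X$'' in $P$ becomes a $12$-pattern ``$X^{tr}S_1^{tr}$'' in $P^{tr}$ with the roles correctly aligned for Lemma \ref{lemma_pivot4}. Once that is pinned down, the lemma follows at once from Lemmas \ref{lemma_pivot4} and \ref{lem_pivot_symmetry}.
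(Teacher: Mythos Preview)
Your proposal is correct and follows essentially the same approach as the paper, which simply remarks that Lemma~\ref{lemma_pivot5} is a direct consequence of Lemma~\ref{lemma_pivot4} together with Lemma~\ref{lem_pivot_symmetry}. Your careful verification of how $12$-patterns, decreasing sequences, and the statistics $\kappa$ and $\rho$ transform under $(\cdot)^{tr}$ makes explicit exactly the bookkeeping the paper leaves to the reader.
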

\vspace{.25in}
\noindent \textbf{5. The Proof of the Main Lemma}
\vspace{.25in}

The purpose of this section is to prove the Main Lemma which is the crucial piece needed to show that our growth diagram construction corresponds to the map $\phi^*$.  Note that up through Lemma \ref{lemma_same_insertion}, $F$ will always denote a rectangular Ferrers board and $P$ a rook placement on $F$. 

We begin with some notation and definitions. 

\begin{notation}
Define $F|_{a,b}$ to be the Ferrers board consisting of all columns of $F$ between and including columns $a$ and $b$, and define $P|_{a,b}$ to be the restriction of $P$ to $F|_{a,b}$. 
\end{notation}

\begin{definitions}
Fix a subplacement $S$, let $k=|S|$, and order $S$ such that $S = S_1\ldots S_k$ where $\col(S_i)<\col(S_{i+1})$.  Further fix a column $c$ and assume that $P$ has no element in column $c$. 

We first define a \emph{left-shift}.  Assume $S$ lies entirely to the right of column $c$. Place markers in all the squares of $P$, and then shift each marker in square $S_i$ for $1<i\leq k$ horizontally left to column $\col(S_{i-1})$ and shift the marker in $S_1$ horizontally left to column c.  Define $P(c\leftarrow S)$ to be the squares that now contain markers. Likewise, define $c\leftarrow S$ to be $P(S\leftarrow c)\setminus P$, the placement obtained from $S$ by shifting it left.

Analogously, we define a \emph{right-shift}. Assume $S$ lies entirely to the left of column $c$.   Place markers in all the squares of $P$, and then shift each marker in square $S_i$ for $1\leq i< k$ horizontally right to column $\col(S_{i+1})$ and shift the marker in $S_k$ horizontally right to column c.  Define $P(S\rightarrow c)$ to be the squares that now contain markers. Likewise, define $S\rightarrow c$ to be $P(S\rightarrow c)\setminus P$, the placement obtained from $S$ by shifting it right.
\end{definitions}

\begin{definitions}
Let $P$ be a placement on a rectangular Ferrers board $F$.  Let $a<b$ be columns of $F$.

Let $k$ be the length of the longest decreasing sequence in $P|_{a,b}$. Define $\ds{a,b}(P)$  (respectively, $\DL{a,b}(P)$) to be the smallest (respectively, largest) decreasing sequence, under the lexicographical ordering,  in $P|_{a,b}$ of length $k$.

Likewise, let $m$ be the length of the longest increasing sequence in $P|_{a,b}$. Define $\is{a,b}(P)$  (respectively, $\IL{a,b}(P)$) to be the smallest (respectively, largest) increasing sequence, under the lexicographical ordering, in $P|_{a,b}$ of length $m$.

\end{definitions}

We are now ready to state and prove Lemma \ref{lemma_same_insertion}.  Although Lemma \ref{lemma_same_insertion} is the key driver behind the Main Lemma, its statement is more general then what is needed to prove the Main Lemma.  The reason for this is that the precise statement of Lemma \ref{lemma_same_insertion} is exactly what is needed in Section 6.

\begin{lemma}\label{lemma_same_insertion}
Let $P$ be a placement on a rectangular Ferrers board $F$ and assume $P$ has no marker in column $a$.  Let $S=\ds{a,b}(P)$ and $P^*= P(a\leftarrow S)$.  Then we have
$$\ins(P) = \ins(P^*).$$
\end{lemma}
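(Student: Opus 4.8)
The plan is to reduce the statement to a claim about left pivots and then invoke Theorem \ref{thm_left_pivots}. Write $S = S_1\ldots S_k = \ds{a,b}(P)$ (the lexicographically smallest longest decreasing sequence in $P|_{a,b}$, of length $k$), and let $P^* = P(a\leftarrow S)$ be the placement obtained by cyclically left-shifting the columns of $S$: the marker in column $\col(S_i)$ moves to column $\col(S_{i-1})$ for $i>1$, and the marker in column $\col(S_1)$ moves to the empty column $a$. The key observation is that this left-shift is essentially the action of $\phi$ (restricted to $k\ldots 1$-patterns) read through columns, and the natural invariant to track is the insertion tableau. Since $\leftexp{-}{(\ins\ P)}$ records (via Theorem \ref{thm_left_pivots}) the insertion tableau of $\pivL\ P$, the cleanest route is: (i) show $\ins\ P$ and $\ins\ P^*$ have the same first column, and (ii) show $\pivL\ P$ and $\pivL\ P^*$ have the same insertion tableau, since then Theorem \ref{thm_left_pivots} and induction on the number of rows/the size of the board finish the argument — actually better, induct directly on $k$ or on $|P|$.

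First I would handle the base case $k=1$: then $P|_{a,b}$ has no $21$-pattern in the relevant columns, but more to the point the decreasing sequence is a single square and the left-shift merely slides one marker horizontally into the empty column $a$ without passing any marker of $P$ in an intermediate column that could change a bump; one checks directly that horizontally translating a lone marker into an empty column to its left, with no markers of $P$ strictly between, leaves the RS insertion tableau unchanged (this is a standard "sliding in an empty column" fact — the partial permutation changes only by relabeling the index, not the value pattern, in the part of the word that matters). For the inductive step, the strategy is to peel off the topmost row. Let $S_j$ be the element of $S$ in the top row among $S_1,\ldots,S_k$; by the definition of $\ds{a,b}$ and since $S$ is decreasing, $S_1$ is in the top row, so $S_1$ is the highest. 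Removing the top row of $\ins\ P$ corresponds (by the pivot machinery of Section 4, read via $\rev$ and $\pivR$, or directly) to passing to $\pivL$; the plan is to show $\pivL(P^*) = (\pivL\ P)^*$ where the right-hand $*$ is the left-shift associated to $\ds{a,b}(\pivL\ P)$, i.e., that the pivot construction commutes with the $\phi$-style left-shift on the shorter decreasing sequence. Then $\ins(\pivL\ P) = \ins(\pivL\ P^*)$ by the inductive hypothesis (the longest decreasing sequence in $\pivL\ P$ is shorter, or the board is smaller), and combining with the claim that $\ins\ P$ and $\ins\ P^*$ agree in their first column (handled separately, e.g. using Schensted: the first column length is the longest decreasing subsequence length, which the left-shift preserves because it maps $k\ldots1$-patterns bijectively and doesn't create longer ones — this is exactly the content behind Lemmas \ref{lemma2}, \ref{lemma3} ported to columns, together with a check on which value sits in the corner of the first column) yields $\ins\ P = \ins\ P^*$ via Theorem \ref{thm_left_pivots}.

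The main obstacle I expect is step (ii): proving that the left-pivot construction commutes with the left-shift, i.e. that $\pivL\ P^* $ is obtained from $\pivL\ P$ by the left-shift along $\ds{a,b}(\pivL\ P)$. This requires understanding precisely how the pivots of $P$ are rearranged when one column of markers is cyclically shifted, and in particular that the lexicographically-smallest longest decreasing sequence of $\pivL\ P$ "lies under" that of $P$ in the right way — this is where Lemmas \ref{lemma_pivot1}–\ref{lemma_pivot5} about pivot-paths and non-crossing should do the heavy lifting: a decreasing sequence of length $k$ in $P$ should, via pivot-paths, yield a decreasing sequence of length $k-1$ in $\pivL\ P$ starting "just below" it (Lemma \ref{lemma_pivot4} is the relevant tool), and minimality under lex order has to be tracked through this correspondence. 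A secondary subtlety is making sure the shifted placement $P^*$ still has $a$ as (or vacates) the appropriate empty column so the induction hypothesis applies to it — one must verify $P^*$ has no marker in whichever column plays the role of the new empty column for $\pivL\ P^*$, which again follows from the pivot bookkeeping. Once the commutation is in hand, the rest is assembling Theorem \ref{thm_left_pivots}, Schensted's theorem, and the induction.
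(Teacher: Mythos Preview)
Your overall strategy---reduce to a statement about $\pivL$ and invoke Theorem~\ref{thm_left_pivots}---is the right instinct, but the specific recursive claim you build the induction on is both unnecessary and likely false. You propose to show
\[
\pivL(P^*) \;=\; (\pivL P)\bigl(a\leftarrow \ds{a,b}(\pivL P)\bigr),
\]
i.e.\ that taking pivots commutes with the $\phi$-type left-shift, and then induct on $k$ (or $|P|$). You correctly flag this commutation as the main obstacle; the problem is that there is no reason to expect $\ds{a,b}(\pivL P)$ to sit underneath $S$ in the tidy way this would require, and elements of $P$ east of $\col(S_k)$ can place pivots into the columns of $S$, so the full-board pivot sets of $P$ and $P^*$ need not be related by any left-shift at all. (There is also a slip where you say ``removing the top row of $\ins P$ corresponds \ldots\ to passing to $\pivL$''; $\pivL$ removes the left \emph{column}, $\pivR$ removes the top row.)

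The paper's argument avoids all of this with a much sharper observation. First truncate: set $b'=\col(S_k)$, $Q=P|_{1,b'}$, $Q^*=P^*|_{1,b'}$. It suffices to prove $\ins(Q)=\ins(Q^*)$, since $P$ and $P^*$ agree east of column $b'$ and the $GDA$ then forces agreement of the right-border partitions. Now the key point: on this truncated board one has
\[
\pivL Q \;=\; \pivL Q^* \qquad\text{literally, as placements.}
\]
No residual shift, no recursion. This is proved by induction on the row $r$: if row $r$ contains $S_i$, a pivot strictly between $\col(S_{i-1})$ (or $a$) and $\col(S_i)$ would, via Lemma~\ref{lemma_pivot4}, produce a $k\ldots1$-sequence lex-smaller than $S$; if row $r$ contains $X\notin S$, Lemma~\ref{lemma_pivot5} forces $\kappa(X,Q)\ge\col(S_i)$ for the least $i$ with $S_iX$ a $12$-pattern, and then the inductive hypothesis on lower rows pins $\kappa(X,Q)=\kappa(X,Q^*)$. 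Once $\pivL Q=\pivL Q^*$, Theorem~\ref{thm_left_pivots} gives $\leftexp{-}{(\ins Q)}=\leftexp{-}{(\ins Q^*)}$; since $Q$ and $Q^*$ have markers in the same rows, the two insertion tableaux have the same entry set, so their first columns---and hence the tableaux themselves---coincide. The whole argument is one pass, with no outer induction on $k$.
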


\begin{proof}
First let $S^*=a\leftarrow S$, i.e., the shifted sequence, and $k=|S|$.  Define $b'$ to be the column containing $S_k$ and consider the truncated board and placements $G = F|_{1,b'}$, $Q=P|_{1,b'}$ and $Q^* = P^*|_{1,b'}$.  Now in order to show  
$$\ins(P) = \ins(P^*),$$
it will suffice to show 
\begin{align}\label{eqn_ins}
\ins(Q) &= \ins(Q^*).
\end{align}
To see this consider the GDA applied to $P$ and $P^*$ on the full board $F$.  By [11], Theorem 7.13.5, knowing (\ref{eqn_ins}) is the same as knowing that the partitions along the line $x=b'$ in the GDA of $P$ and in the GDA of $P^*$ are identical.  But the placements $P$ and $P^*$ are identical east of the line $x=b'$.  Therefore if (\ref{eqn_ins}) holds we know that the partitions along the right border of $F$ in the GDA of $P$ and in the GDA of $P^*$ are also identical. By [11], Theorem 7.13.5, this implies that 
$$\ins(P) = \ins(P^*).$$

In order to show (\ref{eqn_ins}) it is sufficient to prove 
\begin{align}\label{eqn_piv}
\pivL\ Q = \pivL\ Q^*.
\end{align}
To see why note that (\ref{eqn_piv}) along with Theorem \ref{thm_left_pivots} implies that 
$$\leftexp{-}{[\ins(Q)]} = \leftexp{-}{[\ins(Q^*)]}$$
But this forces $ \ins(Q)  =  \ins(Q^*) $ as needed.  

To establish (\ref{eqn_piv}) we will proceed inductively by assuming that the pivots (if any) below some row $r$ are unchanged and then showing that the pivot (if any) on row $r$ is unchanged.  To do so we consider two cases.

\vspace{.15in}

{\tt{Case 1:}} Row $r$ contains an element $S_i$ of $S$.

\vspace{.15in}  

Let $R$ be the rectangular region of $F$ containing all squares west of $\col\ S_i$ and south of $\row\ S_i$.  By the nature of a left-shift, $Q|_R = Q^*|_R$.     Further, by our induction hypothesis $\pivL( Q|_R) = \pivL( Q^*|_R)$.  It then follows that the only way the pivot (if any) on row $r$ could change is if $\pivL( Q)$ contains a pivot in row $r$ between columns $a'$ and $\col(S_i)$, where $a'=a$, if $i=1$, and $a'=\col(S_{i-1})$, if $i>1$.

To show this is impossible assume, for a contradiction, that $\pivL(Q)$ does contain a pivot in row $r$ between columns $a'$ and $\col(S_i)$. Then there must exist some $X\in Q$ that is directly below this pivot.  Since $XS_i$ is a pivot-path, Lemma \ref{lemma_pivot4}, applied to $X$ and $S_iS_{i+1}\ldots $,  implies the existence of a decreasing sequence $D$ of length $k-i+1$ with first element $X$. But this is a contradiction since the decreasing sequence $S_1\ldots S_{i-1} D$ is smaller than $S$.

\vspace{.15in}
{\tt{Case 2:}}  Row $r$  contains $X\in  Q\setminus S$.

\vspace{.15in} 

By the maximality of the length of $S$, $S_{i-1}XS_i$ cannot be a 321-pattern for any $i$.  Therefore it will suffice to assume that $S_iX$ is a 12-pattern for some $i$ and that $i$ is chosen as small as possible. Then $\col(S_i)\leq\kappa(X,Q)$.  If not then Lemma \ref{lemma_pivot5} would give rise to sequence of length $k+1$ contradicting the maximality of the length of $S$.  Therefore we must have an element $Y\in Q$ with $\col(S_i)\leq\col(Y) = \kappa(X,Q)$.  Since $\col(S_i)\leq\col(Y) < b'$ then column $\col(Y)$ must contain an element of $Q^*$ below row $r$. (Note that for the case where $Y\in S$, $Y$ cannot be the last element of $S$.) This plus the induction hypothesis implies that $\col(Y)\leq\kappa(X,Q^*)$.  Assume now, for a contradiction, that $\col(Y)<\kappa(X,Q^*)$ and let $Z\in Q^*$ be such that $\col(Y)<\col(Z)=\kappa(X,Q^*)$.  Now, by similar reasoning, $\col(Z)$ must contain an element of $Q$ below row $r$.  But this plus the induction hypothesis implies that $\kappa(X,Q)=\col(Y)<\col(Z)\leq\kappa(X,Q)$, an obvious contradiction.
\end{proof}

To prove the Main Lemma it suffices, by [11], Theorem 7.13.5, to prove the following.

\begin{lemma} \label{lem_technical_main_lemma}
Let $P$ be a placement on a not necessarily rectangular Ferrers board.  Let $S$ be the smallest $k\ldots 1$-pattern in $P$ and let $b=\row(S_1)$ and $a=\col(S_k)$. If $R_1 = R(a,b-1)$ then 
\begin{align}\label{y}
\ins(P|_{R_1}) &= \ins(\phi(P)|_{R_1}).
\end{align}  
Likewise, if $R_2 = R(a-1,b)$ then
\begin{align}\label{x}
\rec(P|_{R_2})&=\rec( \phi(P)|_{R_2}).
\end{align}
\end{lemma}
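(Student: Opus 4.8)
The plan is to derive both identities from Lemma~\ref{lemma_same_insertion} by recognizing that, inside $R_1$ (and, after reflecting, inside $R_2$), $\phi(P)$ is obtained from $P$ by precisely the kind of left-shift that lemma governs. Write $S=S_1\cdots S_k$ with $\col(S_1)<\cdots<\col(S_k)$, so $\row(S_1)>\cdots>\row(S_k)$, and recall $b=\row(S_1)$, $a=\col(S_k)$. The map $\phi$ fixes every marker of $P$ outside the columns $\col(S_1),\ldots,\col(S_k)$ and replaces the marker in column $\col(S_j)$ by the one at row $\row(S_{j+1})$ for $j<k$, and by the one at row $\row(S_1)$ for $j=k$. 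Restricting to $R_1=R(a,b-1)$: column $\col(S_1)$ is empty in $P|_{R_1}$ but carries a marker (at row $\row(S_2)$) in $\phi(P)|_{R_1}$; column $\col(S_k)$ carries $S_k$ in $P|_{R_1}$ but is empty in $\phi(P)|_{R_1}$; and the intermediate markers slide over one slot. In the notation of Section~5 this is exactly $\phi(P)|_{R_1}=\big(P|_{R_1}\big)\big(\col(S_1)\leftarrow S'\big)$ with $S':=S_2\cdots S_k$, and $P|_{R_1}$ has no marker in column $\col(S_1)$.

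Next I would check that $S'=\ds{\col(S_1),\col(S_k)}(P|_{R_1})$, so that Lemma~\ref{lemma_same_insertion} applies verbatim. The longest decreasing subsequence of $P|_{R_1}$ inside the column strip $[\col(S_1),\col(S_k)]$ has length $k-1$: it is $\geq k-1$ because $S'$ realizes it, and it cannot be $k$, since a decreasing sequence of length $k$ there would be a $k\ldots1$-occurrence of $P$ contained in the rectangle $R(\col(S_k),\row(S_1))\subseteq F$ and beginning strictly below row $\row(S_1)$, contradicting the minimality of $S$ (Lemma~\ref{lemma2}). And $S'$ is the lexicographically smallest such sequence: for any decreasing $E=E_1\cdots E_{k-1}$ in that strip, prepending $S_1$ (legitimate, as $P|_{R_1}$ is empty in column $\col(S_1)$ and $E$ lies below row $\row(S_1)$) produces a $k\ldots1$-occurrence of $P$ whose value sequence begins with $\row(S_1)$, so minimality of $S$ forces its tail to dominate $(\row(S_2),\ldots,\row(S_k))$, i.e. $E\succeq S'$. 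Feeding this into Lemma~\ref{lemma_same_insertion} (applied to $P|_{R_1}$ on the rectangular board $R_1$, with $a\mapsto\col(S_1)$, $b\mapsto\col(S_k)$) gives $\ins(P|_{R_1})=\ins(\phi(P)|_{R_1})$, which is~(\ref{y}).

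For~(\ref{x}) I would pass to inverses. By the Sch\"utzenberger theorem recalled in Section~2.1, $\rec(Q)=\ins(Q')$, where $Q'$ is $Q$ reflected in the SW--NE diagonal, so~(\ref{x}) is equivalent to $\ins\big((P|_{R_2})'\big)=\ins\big((\phi(P)|_{R_2})'\big)$. Repeating the bookkeeping inside $R_2=R(a-1,b)$ — now column $\col(S_k)$ is absent, the values of $S_1\cdots S_{k-1}$ are pushed down one slot, and row $\row(S_1)$ loses its marker — one finds that $(\phi(P)|_{R_2})'$ is again a left-shift of $(P|_{R_2})'$, namely $(P|_{R_2})'\big(\row(S_k)\leftarrow T'\big)$, where $T'$ is the SW--NE image of $S_1\cdots S_{k-1}$, and $(P|_{R_2})'$ is empty in column $\row(S_k)$. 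As before it remains to see that $T'=\ds{\row(S_k),\row(S_1)}\big((P|_{R_2})'\big)$: the length is forced to be $k-1$ because a longer decreasing sequence in that strip, read back on $P$, would be a $k\ldots1$-occurrence ending strictly to the left of column $\col(S_k)$, which Lemma~\ref{lemma2} forbids. Then Lemma~\ref{lemma_same_insertion} yields $\ins\big((P|_{R_2})'\big)=\ins\big((\phi(P)|_{R_2})'\big)$, hence~(\ref{x}).

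I expect the delicate point to be the lexicographic-minimality verification in this last step. For~(\ref{y}) it is immediate: the relevant comparison runs ``left to right'' and is resolved by prepending the single square $S_1$. After the SW--NE reflection used for~(\ref{x}), the comparison effectively proceeds from the \emph{last} column of a competitor inward, and splicing one square onto the end no longer suffices; instead one must show, in the style of the proof of Lemma~\ref{lemma2}, that any decreasing $E$ beating $S_1\cdots S_{k-1}$ in this reflected order can be recombined with a suffix of $S$ to form a $k\ldots1$-occurrence of $P$ lexicographically below $S$ — matching up the columns and rows at the splice so the concatenation stays decreasing and stays inside $R(\col(S_k),\row(S_1))\subseteq F$. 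Carrying out that recombination carefully is the technical heart of the argument.
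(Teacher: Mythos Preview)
Your argument for~(\ref{y}) is exactly the paper's: identify $\phi(P)|_{R_1}$ as the left-shift $\big(P|_{R_1}\big)\big(\col(S_1)\leftarrow S_2\cdots S_k\big)$, check $S_2\cdots S_k=\ds{\col(S_1),a}(P|_{R_1})$, and invoke Lemma~\ref{lemma_same_insertion}. Your verification of the two hypotheses (empty column, lex-minimality via prepending $S_1$) is correct.

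For~(\ref{x}) your route is correct but packaged differently from the paper. You work on $R_2$ itself, reflect, and apply Lemma~\ref{lemma_same_insertion} a second time; this forces you to re-prove lex-minimality on the reflected side, which (as you rightly flag) is the delicate point and needs a splicing argument. The paper instead passes to the full rectangle $R=R(a,b)$, sets $Q=P|_R$, $T=\phi(P)|_R$, and observes that $T'=\phi(Q')$: since every $k\ldots1$-pattern in $Q$ has maximal length $k$, the value-smallest such pattern $S$ is automatically also the position-smallest (if $E$ differed first at index $j$ with $\col(E_j)<\col(S_j)$, comparing rows and splicing yields a decreasing sequence of length $k+1$), so its reflection $S'$ is the value-smallest pattern in $Q'$ and $\phi$ acts on it. Then one simply applies the already-proved~(\ref{y}) to $Q'$ and Sch\"utzenberger gives~(\ref{x}). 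The paper's detour buys you exactly the thing you identified as hard: once ``value-smallest $=$ position-smallest among maximal-length decreasing sequences in $R$'' is isolated as a single clean fact, both the reflected lex-minimality you need and the commutation $\phi(Q')=(\phi(Q))'$ fall out at once, and no separate argument ``from the last column inward'' is required. Your approach and the paper's are logically equivalent --- the recombination you sketch is precisely the splicing that proves coordinatewise minimality of $S$ --- but the paper's formulation localizes the difficulty more sharply.
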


\begin{proof}
Let $Q = P|_{R_1}$ and $T = \phi(P)|_{R_1}$.  Observe that $S_2,\ldots,S_k = \ds{\col( S_1),a}(Q)$. Now Lemma \ref{lemma_same_insertion} implies (\ref{y}). 

For (\ref{x}) let $R=R(a,b)$ and $Q = P|_R$ and $T = \phi(P)|_R$.  Observe that  $T' = \phi(Q')$, where $Q'$ and $T'$ are the inverses of $Q$ and $T$ in the sense of Section 2.   This follows from the fact that $S$ is the decreasing sequence of maximal length in $R$, which implies that $S$ is also the position smallest sequence of length $k$ in $R$ (i.e., $S$ is the smallest if we order $k\ldots 1$-patterns lexicographically according to the positions of the entries, rather than the values of the entries), so that $S'$ is the value smallest decreasing sequence of length $k$ in $R'$.

\begin{figure}[h!]
\begin{center}
$\begin{array}{cp{.3in}c}
\ifx\JPicScale\undefined\def\JPicScale{1}\fi
\unitlength \JPicScale mm
\begin{picture}(60,50)(0,0)

\linethickness{0.1mm}
\put(0,0){\line(1,0){60}}
\put(0,5){\line(1,0){60}}
\put(0,10){\line(1,0){60}}
\put(0,15){\line(1,0){60}}
\put(0,20){\line(1,0){60}}
\put(0,25){\line(1,0){60}}
\put(0,30){\line(1,0){60}}
\put(0,35){\line(1,0){60}}
\put(0,40){\line(1,0){60}}
\put(0,45){\line(1,0){60}}

\put(60,0){\line(0,1){45}}
\put(55,0){\line(0,1){45}}
\put(50,0){\line(0,1){45}}
\put(45,0){\line(0,1){45}}
\put(40,0){\line(0,1){45}}
\put(35,0){\line(0,1){45}}
\put(30,0){\line(0,1){45}}
\put(25,0){\line(0,1){45}}
\put(20,0){\line(0,1){45}}
\put(15,0){\line(0,1){45}}
\put(10,0){\line(0,1){45}}
\put(5,0){\line(0,1){45}}
\put(0,0){\line(0,1){45}}

\put(7.5,42.5){\makebox(0,0)[cc]{$\times$}}
\put(12.5,32.5){\makebox(0,0)[cc]{$\times$}}
\put(17.5,2.5){\makebox(0,0)[cc]{$\times$}}
\put(32.5,22.5){\makebox(0,0)[cc]{$\times$}}
\put(47.5,37.5){\makebox(0,0)[cc]{$\times$}}
\put(57.5,12.5){\makebox(0,0)[cc]{$\times$}}

\color{red}
\put(7.5,32.5){\makebox(0,0)[cc]{$\times$}}
\put(12.5,22.5){\makebox(0,0)[cc]{$\times$}}

\put(32.5,12.5){\makebox(0,0)[cc]{$\times$}}
\put(57.5,42.5){\makebox(0,0)[cc]{$\times$}}

\end{picture}&&\ifx\JPicScale\undefined\def\JPicScale{1}\fi
\unitlength \JPicScale mm
\begin{picture}(45,60)(0,0)

\linethickness{0.1mm}
\put(0,0){\line(0,1){60}}
\put(5,0){\line(0,1){60}}
\put(10,0){\line(0,1){60}}
\put(15,0){\line(0,1){60}}
\put(20,0){\line(0,1){60}}
\put(25,0){\line(0,1){60}}
\put(30,0){\line(0,1){60}}
\put(35,0){\line(0,1){60}}
\put(40,0){\line(0,1){60}}
\put(45,0){\line(0,1){60}}

\put(0,60){\line(1,0){45}}
\put(0,55){\line(1,0){45}}
\put(0,50){\line(1,0){45}}
\put(0,45){\line(1,0){45}}
\put(0,40){\line(1,0){45}}
\put(0,35){\line(1,0){45}}
\put(0,30){\line(1,0){45}}
\put(0,25){\line(1,0){45}}
\put(0,20){\line(1,0){45}}
\put(0,15){\line(1,0){45}}
\put(0,10){\line(1,0){45}}
\put(0,5){\line(1,0){45}}
\put(0,0){\line(1,0){45}}

\put(2.5,17.5){\makebox(0,0)[cc]{$\times$}}
\put(12.5,57.5){\makebox(0,0)[cc]{$\times$}}
\put(22.5,32.5){\makebox(0,0)[cc]{$\times$}}
\put(32.5,12.5){\makebox(0,0)[cc]{$\times$}}
\put(37.5,47.5){\makebox(0,0)[cc]{$\times$}}
\put(42.5,7.5){\makebox(0,0)[cc]{$\times$}}

\color{red}
\put(32.5,7.5){\makebox(0,0)[cc]{$\times$}}
\put(22.5,12.5){\makebox(0,0)[cc]{$\times$}}

\put(12.5,32.5){\makebox(0,0)[cc]{$\times$}}
\put(42.5,57.5){\makebox(0,0)[cc]{$\times$}}

\end{picture}
\end{array}$
\caption{On the left we have an example of $Q$ and $T$.  On the right we have $Q'$ and $T'$.  In both pictures the $\phi(S)$ is marked by the red $\times$'s.}
\end{center}
\end{figure}
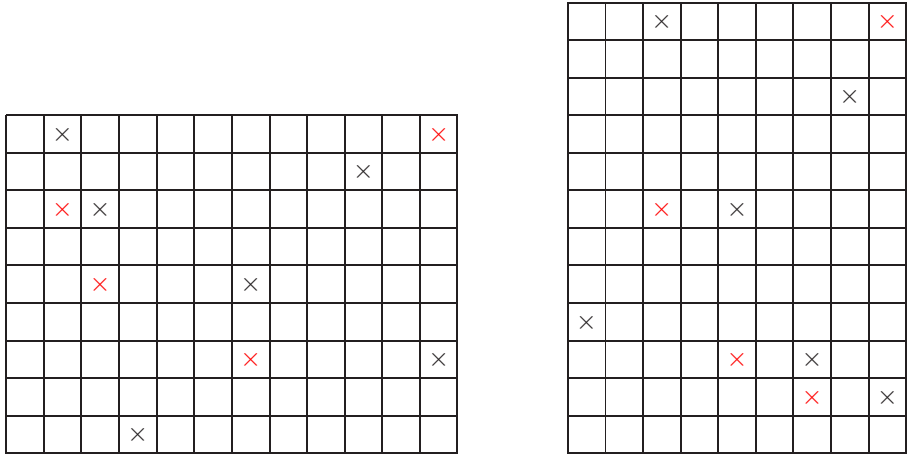
\newpage
Applying (\ref{y}), we know that 
$$\ins( Q'|_{R_2'}) = \ins( T'|_{R_2'}).$$
So by the theorem of Sch$\ddot{\textrm{u}}$tzenberger mentioned in Subsection 2.1 we have
$$\rec(Q|_{R_2})= \rec(T|_{R_2}).$$ 
This completes the proof.  \end{proof}

\vspace{.25in}
\noindent \textbf{6. Generalized Knuth Transformations}
\vspace{.15in}

Recall (see, for example, [11], page 414) that a \emph{Knuth transformation} of a permutation $\sigma$ interchanges two adjacent elements $x$ and $z$ of $\sigma$ provided that there exists a third element $y$, which is adjacent to either $x$ or $z$, such that $x<y<z$.  Two permutations $\sigma$ and $\rho$ are called \emph{Knuth-equivalent}, written $\sigma\sim_K \rho$, if $\rho$ may be obtained from $\sigma$ by a sequence of Knuth transformations.

It is a well known result that two permutations are Knuth-equivalent if and only if they share the same insertion tableau. For a proof of this result see, for example, [11], page 414.    

Fix a placement $P$ on a rectangular Ferrers board $F$.  Further fix columns $a<b$ in $F$.  Let $\ds{} = \ds{a,b}(P)$, $\DL{} =\DL{a,b}(P)$, $\is{} =\is{a,b}(P)$, and $\IL{} =\IL{a,b}(P)$. 

\begin{definition}
Transforming the placement $P$ into either of
\begin{center}
$\begin{array}{ccc}
P(a\leftarrow \ds{})&\textrm{or}&P(a\leftarrow \IL{})\\
\end{array}$
\end{center}
if $P$ has no placement in column $a$, or to either of
\begin{center}
$\begin{array}{ccc}
P(\DL{}\rightarrow b) &\textrm{or}& P(\is{}\rightarrow b)\\
\end{array}$
\end{center}
if $P$ has no placement in column $b$, is called a \emph{generalized Knuth transformation}.
\end{definition}

The next theorem, which is the main point of this section, shows that any two placements that differ by these generalized Knuth transformations are actually Knuth-equivalent.

\begin{theorem}\label{GKE}
If two placements $P$ and $Q$ differ by  generalized Knuth transformations then $\ins(P) = \ins(Q)$.
\end{theorem}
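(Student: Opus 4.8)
The plan is to show that each \emph{single} generalized Knuth transformation preserves the insertion tableau; since ``$\ins(P)=\ins(Q)$'' is an equivalence relation (and, by the theorem quoted just above the statement, is exactly Knuth-equivalence of the underlying partial permutations), preservation under each single move immediately upgrades to preservation under any finite composition, which is the theorem. By the definition there are four kinds of single move, starting from a placement $P$ with an empty column $a$ (resp. $b$): $P\mapsto P(a\leftarrow\ds{a,b}(P))$, $P\mapsto P(a\leftarrow\IL{a,b}(P))$, $P\mapsto P(\DL{a,b}(P)\rightarrow b)$, and $P\mapsto P(\is{a,b}(P)\rightarrow b)$. The first of these is precisely Lemma~\ref{lemma_same_insertion}. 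I would obtain the other three by transporting Lemma~\ref{lemma_same_insertion} through the two board symmetries that act predictably on insertion tableaux: the reversal $\rev$ of Section~4 (reflection in a vertical line), and the reflection $c$ in a horizontal line, which equals ${}'\circ\rev\circ{}'$ and so is built from operations already in play.

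The transport rests on three facts about each of $\rev$ and $c$. Fact (i): each carries the left/right-shift operations to left/right-shift operations (for $c$ unchanged; for $\rev$ with ``left'' and ``right'' interchanged and columns reflected) and preserves the hypothesis that a given column is empty. Fact (ii): each interchanges decreasing and increasing sequences, with $\rev$ preserving the ``smallest/largest'' attribute and $c$ swapping it, so that for instance $\rev$ sends $\ds{a,b}(P)$ to $\is{n+1-b,\,n+1-a}(\rev P)$ (where $n$ is the number of columns) and $c$ sends $\ds{a,b}(P)$ to $\IL{a,b}(cP)$, with the obvious analogues for the other extremal sequences. Fact (iii): each transforms $\ins$ by a bijection of standard Young tableaux --- $\ins(\rev P)=(\ins P)^{tr}$ by Remark~\ref{remark_reverse}, and $\ins(cP)$ is obtained from $\ins P$ by the classical ``evacuation then transpose'' bijection, which follows from $c={}'\circ\rev\circ{}'$, the Sch\"utzenberger theorem of subsection 2.1, and the standard description of the recording tableau under reversal. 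Granting (i)--(iii): applying Lemma~\ref{lemma_same_insertion} to $\rev P$ with the empty column in the role of $n+1-b$, then pulling the conclusion back through $\rev$ via (i) and (ii), turns the equality into $(\ins P)^{tr}=\bigl(\ins\bigl(P(\is{a,b}(P)\rightarrow b)\bigr)\bigr)^{tr}$, which gives the $\is{}$-move; applying Lemma~\ref{lemma_same_insertion} to $cP$ and pulling back through $c$ gives the $\IL{}$-move; and one further application of $c$ (to the $\is{}$-move just obtained) gives the $\DL{}$-move. This closes the argument.

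The step I expect to be the real obstacle is Fact (ii). A reflection does not preserve lexicographic order on arbitrary sequences, so it is not automatic that the reflection of the lexicographically smallest longest decreasing sequence is the lexicographically smallest longest increasing sequence. What must be shown is a coherence property of the \emph{extremal} longest monotone sequences: among the longest decreasing sequences of $P|_{a,b}$, the one whose row-sequence $(\row(S_1),\row(S_2),\dots)$ is lexicographically least is the one whose reflected row-sequence $(\row(S_k),\dots,\row(S_1))$ is lexicographically least --- and similarly for ``largest''. I would prove this by an exchange argument at the first coordinate where two competing longest sequences disagree, using maximality of length to rule out the spliced sequences that would otherwise be produced; it is natural to expect this to dovetail with the left-pivot characterization of $\ds{a,b}$ underlying the proof of Lemma~\ref{lemma_same_insertion} (via Theorem~\ref{thm_left_pivots} and Lemma~\ref{lem_left_right_relation}). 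A lesser point is making Fact (iii) precise, i.e.\ pinning down exactly which classical bijection $c$ induces on $\ins$; but since only bijectivity is used there, any correct form of the formula suffices.
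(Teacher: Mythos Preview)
Your plan is correct and would yield a valid proof, but it diverges from the paper in how the $\DL{}$-move is handled, and that divergence costs you extra work.

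The paper does not touch the complement $c$ at all. Instead, it proves Lemma~\ref{lemma_dec_reverse}: if $\mathbf{D}_1$ lies in column $a$, then $\DL{}\rightarrow b=\ds{a,b}\bigl(P(\DL{}\rightarrow b)\bigr)$. In words, the $\DL{}$-right-shift is literally the \emph{inverse} of a $\ds{}$-left-shift applied to the target placement. Since Lemma~\ref{lemma_same_insertion} already says every $\ds{}$-left-shift preserves $\ins$, the $\DL{}$-move preserves $\ins$ as well, with no appeal to how $\ins$ transforms under complement. This is shorter than your route through $c$, and it sidesteps your Fact~(iii) for $c$ entirely (which, while true --- $\ins(cP)$ is the evacuation-transpose of $\ins P$ --- is a heavier classical fact than anything else in the argument). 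The increasing-sequence moves are then obtained from the two decreasing-sequence moves by the single symmetry $\rev$, exactly as in your plan for $\is{}$.

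On your ``real obstacle'': you have located it precisely. Note first that Fact~(ii) for $c$ is \emph{trivial} --- complementing rows sends the row-vector $(r_1,\ldots,r_k)$ to $(m{+}1{-}r_1,\ldots,m{+}1{-}r_k)$, which reverses lexicographic order, so smallest and largest genuinely swap with no further argument. The nontrivial half is Fact~(ii) for $\rev$: that the lex-smallest longest decreasing sequence (ordered by $(r_1,\ldots,r_k)$) coincides with the one that is lex-smallest for $(r_k,\ldots,r_1)$. Your exchange idea works: if some other longest $T$ had $\row(T_k)<\row(S_k)$, then either $S_1\cdots S_{k-1}T_k$ is decreasing and lex-beats $S$, or $\col(T_k)<\col(S_{k-1})$; in the latter case compare $T_{k-1}$ with $S_{k-1}$ --- equality or $\row(T_{k-1})>\row(S_{k-1})$ each yield a length-$(k{+}1)$ decreasing sequence via $T_1\cdots T_{k-1}S_{k-1}S_k$, so $\row(T_{k-1})<\row(S_{k-1})$ and $\col(T_{k-1})<\col(S_{k-2})$, and one iterates down to $\row(T_1)<\row(S_1)$, contradicting the forward minimality of $S$. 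Iterating then pins down $r_{k-1},r_{k-2},\ldots$ as well. The paper, incidentally, leaves this coherence implicit when it says the increasing cases follow ``similarly,'' so you are not manufacturing a difficulty that is absent from the original.
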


Before launching into the proof of this theorem let us pause to see how these four new transformations are generalizations of the standard Knuth transformations.  First consider the Knuth transformation
$$\underbrace{5\ 3\ 4\ 7\ 6\ 2\ \textbf{8}\ \textbf{1}}_\sigma  \mapsto \underbrace{5\ 3\ 4\ 7\ 6\ 2\ 1\ 8}_\rho.$$
Note that this is of the form $yzx\mapsto yxz$.  Now consider $P_\sigma$ to be the placement corresponding to $\sigma$ on a $9\times 8$ rectangular board where we have an empty column between the `6' and the `2'.   Observe that $\ds{6,9}(P_\sigma)$ corresponds to the subsequence 21 in $\sigma$ and that the partial permutation associated to
$$P_\sigma(6\leftarrow \ds{6,9}(P_\sigma))$$
is order-isomorphic to $\rho$.  It follows that any Knuth transformation of the form $yzx\mapsto yxz$ is generalized by the transformation $P\mapsto P(a\leftarrow \ds{})$.  Similar reasoning demonstrates how the remaining 3 standard Knuth transformations correspond by the remaining 3 generalized Knuth transformations.  For completeness we give the correspondence in the table below.
\begin{center}
$\begin{array}{lp{.5in}c}
\textrm{Generalized} && \textrm{Standard}\\
\hline
P\mapsto P(a\leftarrow \ds{})&&\cdots yzx\cdots\mapsto \cdots yxz\cdots\\
P\mapsto P(\DL{} \rightarrow b)&&\cdots yxz\cdots\mapsto \cdots yzx\cdots\\
P\mapsto P(\is{} \rightarrow b)&&\cdots xzy\cdots\mapsto \cdots zxy\cdots\\
P\mapsto P(a\leftarrow \IL{})&&\cdots zxy\cdots\mapsto \cdots xzy\cdots\\
\end{array}$
\end{center}
Observe that the standard Knuth transformations are clearly reversible. For example $yzx\mapsto  yxz$ and $yxz\mapsto  yzx$ are obviously inverses.  This nice property of the standard transformations extends to the generalized transformations.  We will see below in Lemma \ref{lemma_dec_reverse} that transformations of the forms $P(a\leftarrow \ds{})$ and
$P(\DL{} \rightarrow b)$ are inverses and it follows by considering the reverse of a placement that transformations of the forms $P(\is{} \rightarrow b)$ and $P(a\leftarrow \IL{})$ are inverses.

It remains to prove Theorem \ref{GKE}.  This will occupy the remainder of this section.  We start with a simple definition.

\begin{definition}
If $S$ is a decreasing sequence in $P$ then we say that a subplacement $A\subset P$ is \emph{above} $S$ if for every $C\in A$ there exists some index $i$ such that $S_iC$ is a 12-pattern.  Likewise, we say that $B\subset P$ is \emph{below} $S$ if for every $C\in B$ there exists some $i$ such that $CS_i$ is a 12-pattern.
\end{definition}

\begin{lemma}\label{lemma_shift_right_+1}
Let $L$ be a longest decreasing sequence in $P|_{a,b}$, of length $k$.  If $L\neq \DL{}=\DL{a,b}(P)$ then $P(L\rightarrow b)|_{a,b}$ contains a decreasing sequence of length $k+1$.  
If $L \neq \ds{}$ then $P(a\leftarrow L)|_{a,b}$ contains a decreasing sequence of length $k+1$.  
\end{lemma}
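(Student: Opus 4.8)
\emph{Proof plan.} I will prove the first assertion; the second follows by an analogous argument, in which the roles of ``left'' and ``right'', of $\DL{a,b}$ and $\ds{a,b}$, and of the columns $b$ and $a$ are interchanged. So let $L=L_1\cdots L_k$ be a longest decreasing sequence in $P|_{a,b}$, written so that $\col(L_i)<\col(L_{i+1})$ (hence $\row(L_i)>\row(L_{i+1})$), and suppose $L\neq\DL{a,b}(P)$. As is implicit in the definition of the right-shift $P(L\rightarrow b)$, $P$ has no marker in column $b$, so $L$ lies strictly to the left of column $b$. Let $M_1,\dots,M_k$ be the squares of $L\rightarrow b$, so that $M_i=(\col(L_{i+1}),\row(L_i))$ for $i<k$ and $M_k=(b,\row(L_k))$. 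These again form a decreasing sequence, of length $k$, whose squares all lie in $P(L\rightarrow b)|_{a,b}$, and $P(L\rightarrow b)$ agrees with $P$ away from $\{L_1,\dots,L_k\}\cup\{M_1,\dots,M_k\}$.

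Next I would compare $L$ with $D:=\DL{a,b}(P)$. Let $j$ be the least index with $D_j\neq L_j$, so $D_i=L_i$ for $i<j$. Three facts follow, each from a one-line appeal to extremality: $\row(D_j)>\row(L_j)$, for otherwise $L$ would be value-lexicographically larger than $D$; $\col(D_j)>\col(L_j)$, for otherwise $L_1\cdots L_{j-1}\,D_j\,L_j\cdots L_k$ would be a decreasing sequence of length $k+1$ in $P|_{a,b}$; and hence $\row(L_j)<\row(D_j)<\row(L_{j-1})$ (with the convention $\row(L_0)=\infty$), so that $D_j\notin\{L_1,\dots,L_k\}$ and $D_j$ is still a marker of $P(L\rightarrow b)$. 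When $j\geq 2$ the pair $M_{j-1}D_j$ is then a $21$-pattern in $P(L\rightarrow b)|_{a,b}$, since $M_{j-1}=(\col(L_j),\row(L_{j-1}))$ lies strictly NW of $D_j$; when $j=1$, $D_1$ lies strictly above and to the right of $L_1$.

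The principal case is $\col(D_j)<\col(L_{j+1})$, where I use the convention $\col(L_{k+1})=b$; note this always holds when $j=k$. Here $D_j$ lies strictly NW of $M_j=(\col(L_{j+1}),\row(L_j))$ and (if $j\geq 2$) strictly SE of $M_{j-1}$, so
$$M_1\,M_2\cdots M_{j-1}\;D_j\;M_j\,M_{j+1}\cdots M_k$$
is a decreasing sequence of length $k+1$ inside $P(L\rightarrow b)|_{a,b}$ (to be read as $D_1\,M_1\cdots M_k$ when $j=1$), which is what we want.

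The remaining case, $\col(D_j)>\col(L_{j+1})$ (so $j<k$), is the crux. If $t$ is the largest index with $\col(L_t)<\col(D_j)$, then $t\geq j+1$ and $L_j,\dots,L_t$ all lie strictly SW of $D_j$, so splicing $D_j$ alone into the shifted sequence falls $t-j$ positions short. One natural line of attack is induction on $k$: factor the right-shift as $P(L\rightarrow b)=\bigl[P(\{L_k\}\rightarrow b)\bigr]\bigl(L_1\cdots L_{k-1}\rightarrow\col(L_k)\bigr)$ --- legitimate because relocating $L_k$ to column $b$ empties column $\col(L_k)$ --- and then argue a dichotomy for the placement obtained from $P|_{a,\col(L_k)}$ by deleting $L_k$: either it already contains a decreasing sequence of length $k$, which together with the relocated $L_k$ (now in column $b$) has length $k+1$; or $L_1\cdots L_{k-1}$ is a longest decreasing sequence in it and, necessarily, not the value-lexicographically largest one --- this is where $\col(D_j)>\col(L_{j+1})$ together with $L\neq\DL{a,b}(P)$ is used --- so that the inductive hypothesis produces a decreasing sequence of length $k$ there, again extending through the relocated $L_k$ to length $k+1$. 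The main obstacle, I expect, is the value-lexicographic bookkeeping needed to make this dichotomy precise, and in particular to ensure that the length-$k$ sequences it produces terminate in a row high enough to chain onto $L_k$ in column $b$; the pivot and pivot-path lemmas of Section~4, especially Lemmas~\ref{lemma_pivot4} and~\ref{lemma_pivot5} applied to $P$ itself, are the natural tools for that.
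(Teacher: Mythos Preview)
Your setup and ``principal case'' are correct and coincide with the easy sub-case of the paper's argument (namely, when the single element $D_j$ already fits column-wise between $L_j$ and $L_{j+1}$). The difficulty is entirely in your ``remaining case'', and the inductive strategy you sketch there does not close: the length-$k$ sequences produced by the dichotomy need not terminate above $\row(L_k)$, so they need not chain onto the relocated $L_k$ at $(b,\row(L_k))$. You flag this yourself, but the pivot lemmas do not obviously rescue it, and the paper does not use them here at all.

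The paper's approach avoids induction by using more of $D$ than just the single element $D_j$. With your $j$ (the first index where $D$ and $L$ differ), let $m\geq j$ be the largest index such that each of $D_j,\dots,D_m$ lies strictly northeast of some $L_i$ (the paper says these elements are \emph{above} $L$). Let $n$ be the index with $\col(L_n)<\col(D_m)<\col(L_{n+1})$ (or $n=k$). The key observation is that the two ``mixed'' sequences
\[
L_1\cdots L_n\,D_{m+1}\cdots D_k \qquad\text{and}\qquad D_1\cdots D_m\,L_{n+1}\cdots L_k
\]
are both decreasing in $P|_{a,b}$; maximality of $k$ forces $n\leq m$ and $m\leq n$, hence $m=n$. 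Now set $L^*=L\rightarrow b$ and observe that
\[
L^*_1\cdots L^*_{j-1}\,D_j\cdots D_m\,L^*_n\cdots L^*_k
\]
is a decreasing sequence in $P(L\rightarrow b)|_{a,b}$ of length $(j-1)+(m-j+1)+(k-n+1)=k+1$, using $m=n$. The point you were missing is that when $D_j$ overshoots $L_{j+1}$ column-wise, one should not try to splice in $D_j$ alone but the entire run $D_j\cdots D_m$; the equality $m=n$, forced by maximality, guarantees this run has exactly the right length to fill the gap in the shifted sequence.
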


\begin{proof}
We prove only the first assertion.  The second is analogous.  

As $L\neq \DL{}$ we must have some sequence $D$ in $F|_{a,b}$ with $|D|=k$ that is value-larger than $L$.  For $D$ to be value-larger than $L$ we must have some $0\leq l$ and some $l+1\leq m$ such that 
$$D = L_1\ldots L_l D_{l+1}\ldots D_mD_{m+1}\ldots D_k$$
where $D_{l+1}\ldots D_m$ is above $L$ and $D_{m+1}$ is not above $L$, or $D_m$ is the last element of $D$.  Now define $n$ to be the index such that $D_m$ is between the columns containing $L_{n}$ and $L_{n+1}$, or $n=k$ if $D_m$ is the last element of $D$. 

Clearly $D_{m+1}\neq L_{n}$ and as $D_{m+1}$ is not above $L$ then $L_{n}D_{m+1}$ must be a 21-pattern. Therefore
$$L_1\ldots L_{n}D_{m+1}\ldots D_k$$
is a decreasing sequence in $P|_{a,b}$.  Since no decreasing sequence in $P|_{a,b}$ has length greater than $k$ then we must have $n + k-m\leq k$, i.e., $n\leq m$.    Further, observe that
$$D_1\ldots D_m L_{n+1}\ldots L_k$$
is also a decreasing sequence in $P|_{a,b}$.  By similar logic we must have $m\leq n$. Therefore $m=n$. 

Now consider the right-shift and let $L^* = L\rightarrow b$.  Finally, observe that
$$L_1^*\ldots L_l^* D_{l+1}\ldots D_m L^*_{n} \ldots L^*_k$$
is a decreasing sequence in $P(L\rightarrow b)|_{a,b}$.  Since $m=n$ its length is $k+1$ as claimed.  
\end{proof}

\begin{lemma}\label{lemma_SL_left_shift}
We have the following relationships for decreasing sequences:
\begin{center}
\begin{align*}
a\leftarrow \ds{}&\textrm{ is longest in }P(a\leftarrow \ds{})|_{a,b} \\
\DL{}\rightarrow b&\textrm{ is longest in }P(\DL{}\rightarrow b)|_{a,b}
\end{align*}
\end{center}
\end{lemma}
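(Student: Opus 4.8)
\emph{Proof proposal.} The plan is to reduce the whole statement to Schensted's theorem together with the insertion-tableau invariance already supplied by Lemma~\ref{lemma_same_insertion} and its left--right mirror. Recall from Subsection~2.1 that the number of rows of $\shape(\ins(Q))$ is the length of a longest decreasing sequence in $Q$; consequently two placements with equal insertion tableaux have equal longest-decreasing-sequence lengths, and this is essentially all that is needed. For the first assertion I would set $G=F|_{a,b}$ and $Q=P|_{a,b}$. Since $P(a\leftarrow\ds{a,b}(P))$ is only defined when column $a$ holds no marker of $P$, the board $Q$ has no marker in its leftmost column; restricting to columns $a,\dots,b$ does not alter which decreasing sequences lie in that range, so $\ds{a,b}(Q)=\ds{a,b}(P)=:S$; and a left-shift moves markers only among columns $a,\dots,\col(S_k)$, so $Q(a\leftarrow S)=P(a\leftarrow S)|_{a,b}$. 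Lemma~\ref{lemma_same_insertion} applied to $Q$ on $G$ then gives $\ins\bigl(P(a\leftarrow S)|_{a,b}\bigr)=\ins(Q)$, whence $P(a\leftarrow S)|_{a,b}$ and $Q$ share the same longest-decreasing-sequence length $k=|S|$. Finally, a leftward column-shift leaves every row and the left-to-right order of the shifted markers unchanged, so $a\leftarrow S$ is itself a decreasing sequence of length $k$ contained in $P(a\leftarrow S)|_{a,b}$, hence a longest one there.

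For the second assertion I would run the mirror of this argument. The ingredient needed is the right-shift counterpart of Lemma~\ref{lemma_same_insertion}: if $P$ has no marker in column $b$ and $S=\DL{a,b}(P)$, then $\ins(P)=\ins\bigl(P(S\to b)\bigr)$. I would establish this by reflecting the proof of Lemma~\ref{lemma_same_insertion} in a vertical line, so that left-shifts of $\ds{a,b}$ turn into right-shifts of $\DL{a,b}$, the left pivots $\pivL$ into the right pivots $\pivR$, Theorem~\ref{thm_left_pivots} into Theorem~\ref{thm_pivots}, and the roles of Lemmas~\ref{lemma_pivot4} and~\ref{lemma_pivot5} are exchanged; Lemma~\ref{lemma_shift_right_+1} is still invoked to forbid a longer decreasing sequence. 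Granting this counterpart, one copies the previous paragraph with $G=F|_{a,b}$, $Q=P|_{a,b}$, $S=\DL{a,b}(Q)=\DL{a,b}(P)$ and $Q(S\to b)=P(S\to b)|_{a,b}$: the counterpart gives $\ins\bigl(P(S\to b)|_{a,b}\bigr)=\ins(Q)$, Schensted's theorem gives equality of longest-decreasing-sequence lengths, and $S\to b$ is patently a decreasing sequence of length $k=|S|$ in $P(S\to b)|_{a,b}$, hence a longest one.

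The only step demanding real care is the mirror of Lemma~\ref{lemma_same_insertion}. Because the reversal $\rev$ carries decreasing sequences to increasing ones, this mirror is \emph{not} a formal consequence of Lemma~\ref{lemma_same_insertion} via $\rev$; it must be re-proved, and the delicate part is the inductive ``pivots below row $r$ unchanged $\Rightarrow$ pivot on row $r$ unchanged'' step, where in the two cases (a row meeting $\DL{a,b}$, and a row meeting $P\setminus\DL{a,b}$) every occurrence of ``leftmost/rightmost'', ``west/east'' and ``$\pivL$'' must be flipped consistently and the maximality-of-length contradictions rewired accordingly. Once that mirror is in hand, both halves of the lemma follow immediately from Schensted's theorem, exactly as above.
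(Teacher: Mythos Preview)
Your argument for the first assertion is essentially the paper's: restrict to $F|_{a,b}$, apply Lemma~\ref{lemma_same_insertion}, and invoke Schensted. That part is fine.

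For the second assertion your route diverges from the paper's and is considerably heavier. You propose to prove a right-shift analogue of Lemma~\ref{lemma_same_insertion} by mirroring its entire proof with $\pivR$ in place of $\pivL$, Theorem~\ref{thm_pivots} in place of Theorem~\ref{thm_left_pivots}, and right-pivot versions of Lemmas~\ref{lemma_pivot1}--\ref{lemma_pivot5}. This can be made to work, but none of those mirrored pivot lemmas are available in the paper, so you would have to state and prove them; also, your reference to Lemma~\ref{lemma_shift_right_+1} is misplaced, since that lemma plays no role in the proof of Lemma~\ref{lemma_same_insertion}. The paper avoids all of this with a one-line symmetry: instead of reflecting only in a vertical line (which, as you correctly note, sends decreasing sequences to increasing ones), it applies the \emph{complement of the reverse}, i.e.\ the $180^\circ$ rotation. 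Under this rotation decreasing sequences stay decreasing, $\DL{a,b}$ becomes $\ds{}$ of the rotated placement, a right-shift to column $b$ becomes a left-shift to column $1$, and shape is preserved (since both reversal and complement transpose the shape). Thus the second assertion reduces immediately to the first via Lemma~\ref{lemma_same_insertion}, with no new pivot machinery required. Your dismissal of a symmetry argument was premature: $\rev$ alone fails, but $\comp\circ\rev$ succeeds.
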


\begin{proof}
First let $Q = P|_{a,b}$ and define $n = b-a+1$, the number of columns in $Q$.  Now define 
\begin{center}
$\begin{array}{lp{.5in}l}
Q_1 = Q(1\leftarrow \ds{1,n}(Q))&&Q_2 =Q(\DL{1,n}(Q)\rightarrow n). 
\end{array}$
\end{center}
It will suffice, by the result of Schensted mentioned in Subsection 2.1, to prove that 
$$ \shape(Q_i) = \shape (Q).$$  

For the first assertion, $\shape(Q_1) = \shape(Q)$ follows directly from Lemma \ref{lemma_same_insertion}.

For the second assertion let $Q^*$ and $Q_2^*$ be the complements of the reverses of $Q$ and $Q_2$.  (By the complement of $Q$ we mean the placement obtained by flipping $Q$ and $F|_{a,b}$ about a horizontal line.) Note that
$$Q_2^* = Q^*(1\leftarrow \ds{1,n}(Q^*)).$$
From the first assertion we know that $\shape(Q_2^*) = \shape(Q^*)$.  Therefore $\shape(Q_2) = \shape(Q)$.
\end{proof}

The next lemma will enable us to reverse  generalized Knuth transformations.  We state the results explicitly only for decreasing sequences.  As remarked previously the results for increasing sequences follow immediately.  

\begin{lemma}\label{lemma_dec_reverse}
Let $k = |\ds{}|$ and assume that $\mathbf{d}_k$ is in column $b$.  Then we have
$$a\leftarrow \ds{} = \DL{a,b}(P(a\leftarrow \ds{})).$$
Likewise, if $\mathbf{D}_1$ is in column $a$,  then we have
$$\DL{}\rightarrow b = \ds{a,b}(P(\DL{}\rightarrow b)).$$
\end{lemma}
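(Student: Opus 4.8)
The plan is to deduce both identities from Lemmas \ref{lemma_SL_left_shift} and \ref{lemma_shift_right_+1} by a one‑line contradiction argument. The two assertions are mirror images of each other under interchanging $\ds{}$ with $\DL{}$, left‑shifts with right‑shifts, and column $a$ with column $b$, so I would prove the first in detail and obtain the second by running the symmetric argument with the ``other halves'' of those two lemmas. Throughout, recall that $k$ denotes the length of the longest decreasing sequence in $P|_{a,b}$ (by the definition of $\ds{}=\ds{a,b}(P)$), and that $k\ge 1$, since otherwise $\ds{k}$ does not exist and the hypothesis is vacuous.

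For the first assertion, write $P^{*}=P(a\leftarrow\ds{})$ and $L=a\leftarrow\ds{}$, a decreasing sequence in $P^{*}|_{a,b}$ of length $k$. First I would record two bookkeeping facts. (i) Because $\ds{k}$ lies in column $b$ it is the only marker of $P$ in that column, and the left‑shift moves each affected marker strictly to the left of column $b$ (the image of $\ds{i}$ lies in column $\col(\ds{i-1})\le\col(\ds{k-1})<b$ for $i>1$, and the image of $\ds{1}$ lies in column $a<b$); hence $P^{*}$ has no marker in column $b$ and $L$ lies entirely in columns $<b$, so the right‑shift $P^{*}(L\to b)$ is defined. (ii) Tracing how the left‑shift followed by this right‑shift acts on each individual marker — using crucially that $\col(\ds{k})=b$, so that the last element of $L$ returns exactly to $\ds{k}$ — one checks that $P^{*}(L\to b)=P$. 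Granting these, the argument is immediate: by Lemma \ref{lemma_SL_left_shift}, $L$ is a longest decreasing sequence in $P^{*}|_{a,b}$, so $k$ is the maximal such length and $\DL{a,b}(P^{*})$ is defined; if $L\neq\DL{a,b}(P^{*})$, then Lemma \ref{lemma_shift_right_+1} applied to the placement $P^{*}$ and its longest decreasing sequence $L$ produces a decreasing sequence of length $k+1$ in $P^{*}(L\to b)|_{a,b}=P|_{a,b}$, contradicting the maximality of $k$ in $P|_{a,b}$. Hence $L=\DL{a,b}(P^{*})$, i.e. $a\leftarrow\ds{}=\DL{a,b}(P(a\leftarrow\ds{}))$.

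For the second assertion I would run the mirror image with the corresponding halves of the same two lemmas: put $P^{\dagger}=P(\DL{}\to b)$ and $M=\DL{}\to b$; since $\DL{1}$ lies in column $a$, the right‑shift moves every affected marker strictly to the right of column $a$, so $P^{\dagger}$ has no marker in column $a$, $M$ lies in columns $>a$, and (as in (ii), now using $\col(\DL{1})=a$) $P^{\dagger}(a\leftarrow M)=P$; by the second half of Lemma \ref{lemma_SL_left_shift}, $M$ is a longest decreasing sequence in $P^{\dagger}|_{a,b}$; and if $M\neq\ds{a,b}(P^{\dagger})$, the second half of Lemma \ref{lemma_shift_right_+1} yields a decreasing sequence of length $k+1$ in $P^{\dagger}(a\leftarrow M)|_{a,b}=P|_{a,b}$, again impossible. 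The only genuine work anywhere is the elementary but slightly fussy verification in step (ii) that a left‑shift followed by the matching right‑shift (and, dually, a right‑shift followed by the matching left‑shift) is the identity once the relevant extreme element of the sequence sits in the boundary column; I expect this to be the main place one has to be careful, but there is no conceptual obstacle beyond it, as Lemmas \ref{lemma_SL_left_shift} and \ref{lemma_shift_right_+1} carry the rest.
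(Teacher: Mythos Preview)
Your proof is correct and follows essentially the same route as the paper's: assume the shifted sequence is not the lexicographically largest (resp.\ smallest), use Lemma \ref{lemma_SL_left_shift} to see it is still longest, then apply Lemma \ref{lemma_shift_right_+1} to obtain a length-$(k+1)$ decreasing sequence in $P^{*}(L\to b)|_{a,b}=P|_{a,b}$, a contradiction. Your version is in fact more explicit than the paper's in verifying the bookkeeping (that the reverse shift is defined and that it recovers $P$), which the paper simply asserts.
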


\begin{proof}
For the first claim set $P^* = P(a\leftarrow \ds{})$. Assume for a contradiction that $a\leftarrow \ds{}\neq  \DL{a,b}(P^*)$.  By Lemma \ref{lemma_SL_left_shift} we know that $a\leftarrow \ds{}$ is a longest sequence in $P(a\leftarrow \ds{})|_{a,b}$.  Now Lemma \ref{lemma_shift_right_+1} implies that  $ P^*((a\leftarrow \ds{})\rightarrow b)|_{a,b} =P|_{a,b}$ has a decreasing subsequence of length $k+1$.  But this is impossible since $|\ds{}| = k$.  

A similar argument may be used to obtain the second claim.  The details are straightforward. 
\end{proof}

\begin{proof}[Proof of Theorem \ref{GKE}.]
It will suffice to prove that the insertion tableau of a placement is invariant under any of the four generalized Knuth transformations.  We give the arguments for decreasing sequences only. 

\vspace{.15in}
\noindent
{\tt{Case 1:}} $P(a\leftarrow \ds{}).$
\vspace{.15in}

This is clear by \ref{lemma_same_insertion}.

\vspace{.15in}
\noindent
{\tt{Case 2:}} $P(\DL{}\rightarrow b).$
\vspace{.15in}

First define $a'$ to be the column containing the first element of $\DL{}$.   Lemma \ref{lemma_dec_reverse} implies that $\DL{}\rightarrow b = \ds{a',b}[P(\DL{}\rightarrow b)]$.  Letting $P^* = P(\DL{}\rightarrow b)$, Case 1 gives 

$$\ins\ P^* = \ins[ P^*(a'\leftarrow (\DL{}\rightarrow b))] = \ins\ P.$$
\end{proof}

\noindent\textbf{Acknowledgment.} The first author would like to thank his thesis advisor, Sergi Elizalde, for suggesting the problem of reformulating $\phi^*$.

\vspace{.35in}

\centerline{\textbf{References}}
\vspace{.15in}
\footnotesize
\noindent1. J. Backelin, J. West, G. Xin, Wilf-equivalence for
singleton classes, \emph{Adv. Appl. Math.} \textbf{38} (2007), 133--148.\\[5pt]
2. J. Bloom and D. Saracino, Another look at pattern-avoiding permutations, \emph{Adv. Appl. Math.} \textbf{45} (2010), 395--409.\\[5pt]
3. M. Bousquet-M$\acute{\textrm{e}}$lou, E. Steingr$\acute{\textrm{\i}}$msson, Decreasing
subsequences in permutations and Wilf equivalence for involutions,
\emph{J. Algebraic Combin.} \textbf{22} (2005), 383--409.\\[5pt]
4. S. Fomin, Generalized Robinson-Schensted-Knuth correspondence, \emph{Zapiski Nauchn. Sem. LOMI} \textbf{155} (1986), 156--175.\\[5pt]
5. S. Fomin, Schensted algorithms for dual graded graphs, \emph{J. Alg. Comb.} \textbf{4} (1995), 5--45.\\[5pt]
6. C. Krattenthaler,   Growth diagrams, and increasing and decreasing chains in fillings of Ferrers shapes, \emph{Adv. Appl. Math.} \textbf{37} (2006), 404--431.\\[5pt]
7. B. Sagan, The Symmetric Group, second edition, Springer-Verlag, New York 2001. \\[5pt] 
8. D. Saracino, On two bijections from $S_n(321)$ to $S_n(132)$, \emph{Ars. Comb.} \textbf{101} (2011), 65-74.\\[5pt]
9. C. Schensted, Longest increasing and decreasing subsequences, \emph{Canad. J. Math} \textbf{13} (1961), 179--191.\\[5pt]
10. M. Sch$\ddot{\textrm{u}}$tzenberger, Quelques remarques sur une construction de Schensted, \emph{Math. Scand.} \textbf{12} (1963), 117--128.\\[5pt]
11. R. Stanley,  Enumerative Combinatorics, Vol. II, Cambridge Univ. Press, Cambridge,  1999.

\end{document}